\documentclass[onefignum,onetabnum]{siamonline190516} 

\usepackage{lipsum}
\usepackage{amsfonts}
\usepackage{graphicx}
\usepackage{epstopdf}
\usepackage{algorithmic}
\usepackage{dsfont}
\usepackage{enumitem}
\setlist[enumerate]{leftmargin=.5in}
\setlist[itemize]{leftmargin=.5in}
\usepackage{multirow}

\usepackage{pdflscape}
\usepackage{dsfont}
\usepackage[caption=false]{subfig} 
\ifpdf
  \DeclareGraphicsExtensions{.eps,.pdf,.png,.jpg}
\else
  \DeclareGraphicsExtensions{.eps}
\fi


\newsiamremark{remark}{Remark}
\newsiamremark{hypothesis}{Hypothesis}
\crefname{hypothesis}{Hypothesis}{Hypotheses}
\newsiamthm{claim}{Claim}

\newtheorem*{theorem_no_number}{Theorem}

\theoremstyle{plain}

\numberwithin{assumption}{subsection}

\newenvironment{manualassumption}[1]{%
  \manualassumptioninner
}{\endmanualassumptioninner}

\makeatletter
\newcommand{\customlabel}[2]{%
   \protected@write \@auxout {}{\string \newlabel {#1}{{#2}{\thepage}{#2}{#1}{}} }%
   \hypertarget{#1}{}
}
\makeatother

\DeclareMathOperator*{\argmax}{arg\,max}
\DeclareMathOperator*{\argmin}{arg\,min}


\numberwithin{equation}{subsection}
\numberwithin{theorem}{section}

\headers{Parameter Estimation and Optimal Sensor Placement}{L. Sharrock and N. Kantas}

\title{Joint Online Parameter Estimation and Optimal Sensor Placement for the Partially Observed Stochastic Advection-Diffusion Equation}
\author{Louis Sharrock\thanks{
Department of Mathematics, Imperial College London, South Kensington, London, SW7 2AZ, UK
  (\email{louis.sharrock16@imperial.ac.uk}, \email{n.kantas@imperial.ac.uk}) \newline
  \color{header1} \textbf{Funding:} \color{black} The first author was funded by the EPRSC CDT in the Mathematics of Planet Earth (grant number EP/L016613/1) and the National Physical Laboratory. The second author was partially funded under a J.P. Morgan A.I. Research Award (2019).}
\and Nikolas Kantas\footnotemark[1]
 }

\usepackage{amsopn}
\DeclareMathOperator{\diag}{diag}

\ifpdf
\hypersetup{
  pdftitle={Joint Online Parameter Estimation and Optimal Sensor Placement for the Partially Observed Stochastic Advection-Diffusion Equation},
  pdfauthor={L. Sharrock, N. Kantas}
}
\fi

\begin{document}

\maketitle

\begin{abstract}
In this paper, we consider the problem of jointly performing online parameter estimation and optimal sensor placement for a partially observed infinite dimensional linear diffusion process. We present a novel solution to this problem in the form of a continuous-time, two-timescale stochastic gradient descent algorithm, which recursively seeks to maximise the asymptotic log-likelihood of the observations with respect to the unknown model parameters, and to minimise the expected mean squared error of the hidden state estimate with respect to the sensor locations. We also provide extensive numerical results illustrating the performance of the proposed approach in the case that the hidden signal is governed by the two-dimensional stochastic advection-diffusion equation. 
\end{abstract}

\begin{keywords}
stochastic advection-diffusion equation, stochastic filtering, 
online parameter estimation, optimal sensor placement, two-timescale stochastic gradient descent
\end{keywords}

\begin{AMS}
35K57, 60-08, 60G35, 60H15, 62M20, 93E12, 93E20
\end{AMS}

\section{Introduction}
\label{sec:introduction}

The study of partially observed stochastic dynamical systems is old, but remains relevant to numerous applications in fields as diverse as acoustics and signal processing, image analysis and computer vision, automatic control and robotics, economics and finance, computational biology and bioinformatics, environmental monitoring, and meteorology (e.g., \cite{Bain2009,Cappe2005,Doucet2001a,Elliott1995}). In this paper, we consider a partially observed stochastic process governed by 
a particular dissipative stochastic partial differential equation (SPDE), namely, 
the stochastic advection-diffusion equation.
This equation, \color{black} typically as one component of a hierarchical model (see, e.g., \cite{Cressie2011a}), \color{black} is frequently used in environmental monitoring applications to model phenomena such as precipitation \cite{Brown2001,Liu2019,Sigrist2015}, air pollution \cite{Barrera1997,Liu2016}, 
groundwater flow \cite{Kumar1991,Unny1989}, and sediment transport \cite{Man2007}. 

We will consider the case in which the hidden state of interest is a space-time varying scalar field, $u(\boldsymbol{x},t)$, on some bounded two-dimensional domain $\Pi\subseteq\mathbb{R}^2$. This state is modelled using the stochastic advection-diffusion equation, which can be written as
\begin{equation}
\frac{\partial u(\boldsymbol{x},t)}{\partial t} = -\boldsymbol{\mu}
^T 
\nabla u(\boldsymbol{x},t) + \nabla\cdot\Sigma
\nabla u(\boldsymbol{x},t)-\zeta
u(\boldsymbol{x},t) 
+ b(\boldsymbol{x})\varepsilon(\boldsymbol{x},t) \label{SPDE}
\end{equation}
where $\boldsymbol{x}=(x_1,x_2)^T\in\Pi$, $\nabla = (\partial/\partial x_1, \partial/\partial x_2)^T$ is the gradient operator, $\nabla\cdot$ is the divergence operator, 
and ${\varepsilon}(\boldsymbol{x},t)$ is a Gaussian noise process which is temporally white and spatially coloured. This might appear as a restrictive choice for the dynamics, but much of the subsequent methodology is generic, and could thus theoretically be applied to other models of interest (see \cite{Sharrock2020a} for a rigorous treatment). Moreover, this model results in a tractable but non-separable space-time covariance operator \cite{Sigrist2015}, and thus its spatiotemporal dynamics are interpretable for practitioners. \color{black} In addition, the terms in this equation can, if desired, be given a clear physical interpretation. In particular, the first term 
 describes transport effects, also termed {convection} or {advection}, with $\smash{\boldsymbol{\mu}
 =(\mu_1
 ,\mu_2
 )
^T}$ $\smash{\in\mathbb{R}^2}$ the {drift} or {velocity} field. The second term
 describes a possibly anisotropic {diffusion}, with $\smash{\Sigma = [\Sigma_{i,j}]_{i,j=1,2}\in\mathbb{R}^{2\times 2}}$ 
 the {diffusivity} or {diffusion matrix}. This matrix can further be parametrised as (e.g., \cite{Sigrist2015})
\begin{equation}
\Sigma^{-1}: =\Sigma^{-1}(\rho_1,\gamma,\alpha)
 = \frac{1}{\rho_{
1}^2}\left(\begin{array}{cc} \cos{\alpha
} & \hspace{-1mm} \sin{\alpha
} \\ -\gamma
\sin{\alpha
}  & \gamma \cos{\alpha
} \end{array}\right)^T\left(\begin{array}{cc} \hspace{-1mm} \cos{\alpha
} & \sin{\alpha
} \\ -\gamma
\sin{\alpha
} & \gamma\cos{\alpha
} \end{array}\right)
\end{equation}
in which case $\rho_{
1}\in\mathbb{R}_{+}$ can be viewed as the {range}, which determines the amount of diffusion; $\gamma
\in\mathbb{R}_{+}$ as the {anisotropic amplitude}, which determines the amount of anisotropy; and $\alpha
\in[0,\frac{\pi}{2}]$ as the {anisotropic direction}, which determines the direction of the anisotropy. 
In the case that $\gamma%
\equiv 1$, this matrix is symmetric, and the diffusion is isotropic. 
Finally, the third term describes {damping}, with $\zeta
\in\mathbb{R}_{+}$ the {damping rate}, or {damping coefficient}. \color{black}

The central problem underlying this partially observed stochastic dynamical system is that of optimal state estimation, or filtering. This consists in determining the conditional probability distribution of the latent signal process (i.e., the filter), given the history of observations, under the assumption that any model parameters are known, and the locations of the measurement sensors are fixed (e.g., \cite{Bashirov2003,Curtain1975,Falb1967}). In practical applications, however, it is often the case that the parameters of this model are unknown, and must be inferred from the data. Indeed, inferring the model parameters is often the primary problem of interest (e.g., \cite{Kumar1991,Mohapl2000}). Moreover, the locations of the measurement sensors are typically not fixed, and thus it may be possible to improve upon the optimal state estimate by determining an `optimal sensor placement'. This is particularly relevant to applications in engineering and the applied sciences. 
In such applications, the process of interest, even if defined continuously over space and in time, can only be measured at a finite number of spatial locations. Moreover, the spatial density of observations is generally very low, due either to prohibitive expense (i.e., the sensors are expensive, or expensive to place), 
or geographical inaccessibility (i.e., the sensors cannot be placed in particular locations). 
Furthermore, measurements at certain points in the domain may yield more information about the system than measurements at other points, due to correlations in the signal. Thus, to a greater or lesser extent, the accuracy of the estimate of the signal is dependent on the number and location of the measurement sensors. 

\color{black}
In this paper, we address, for the first time, the problems of parameter estimation and optimal sensor placement (for the purpose of optimal state estimation) together. This represents a significant departure from the existing literature, in which these two problems have, until now, been studied separately. There is clear motivation for this combined approach. In the vast majority of practical applications, both of these problems are relevant. It would thus be highly convenient to solve them simultaneously, and, if possible, in an online fashion (i.e., in real time). Moreover, they are often interdependent, in the sense that the optimal sensor placement can vary significantly according to the current parameter estimate (see Section \ref{sec:opt_sens}). Thus, tackling them together can result in significant performance improvements (see Figure \ref{fig0}). Before we provide further details on our approach, let us briefly review the existing literature on these two important problems.  \color{black}

\begin{figure}[!h]
\vspace{-2mm}
\centering
\captionsetup[subfloat]{captionskip=-1.5pt}
\subfloat[
]{\label{fig_0a}\includegraphics[width=.26\linewidth]{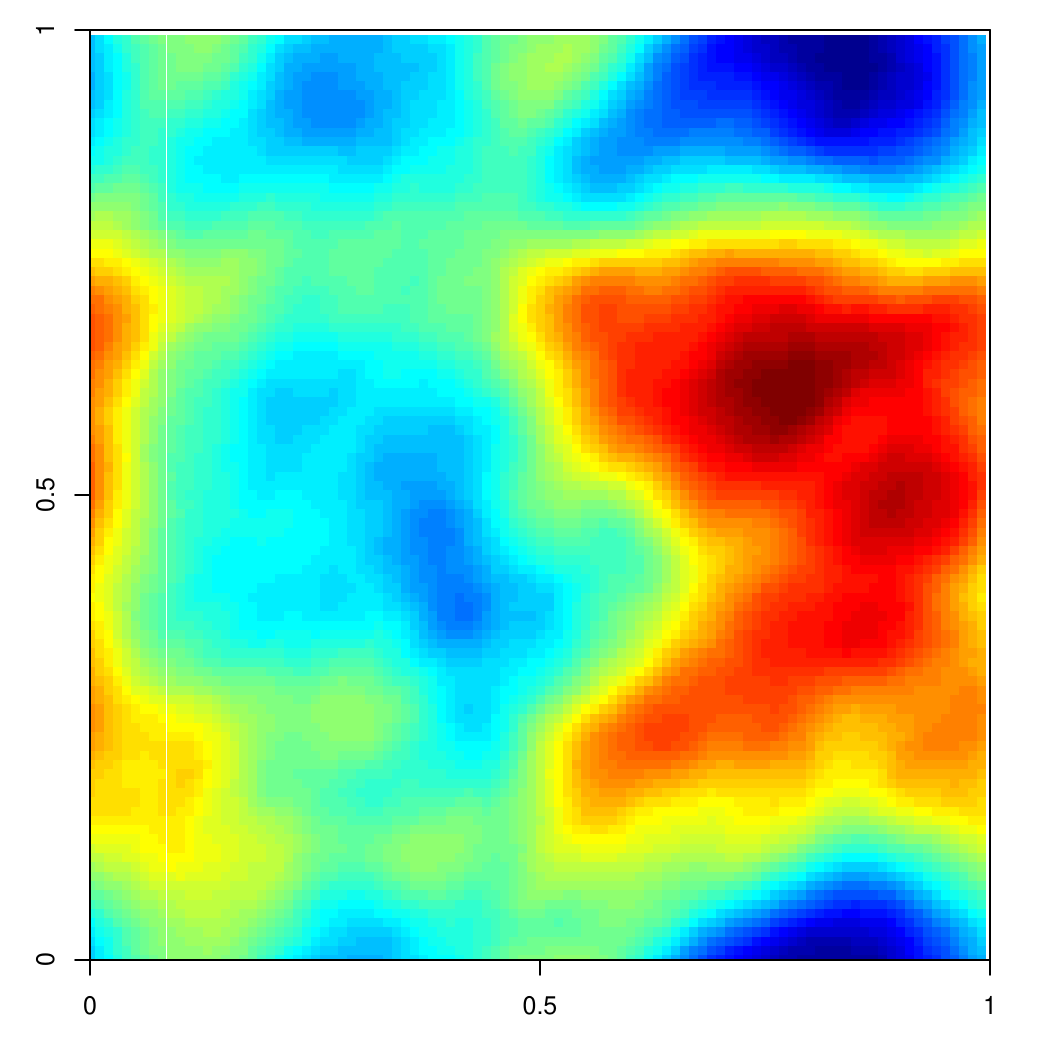}}
  \\[-2mm] 
\subfloat[
]{{\label{fig_0b}\includegraphics[width=.2\linewidth]{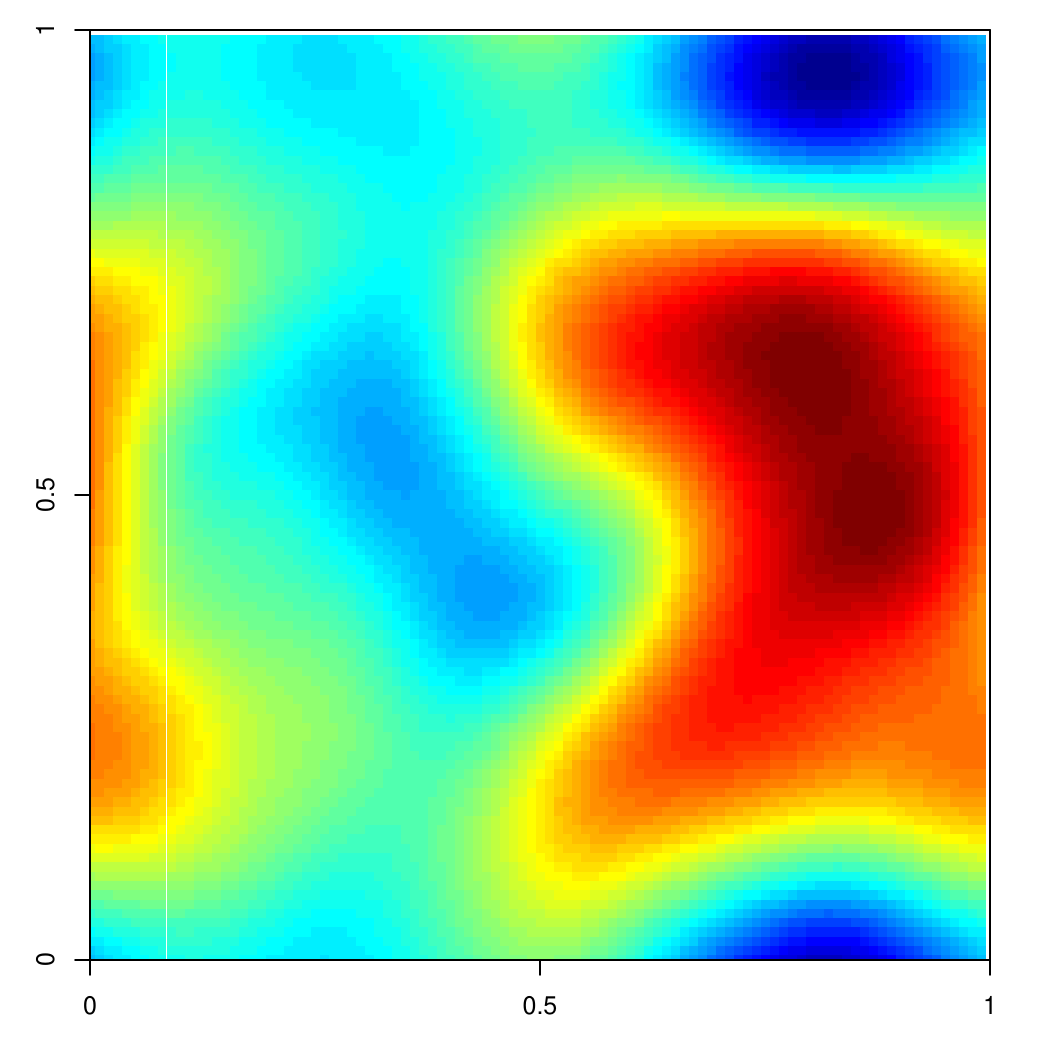}}} 
 \hspace{6mm}
\subfloat[
]{{\label{fig_0c}\includegraphics[width=.2\linewidth]{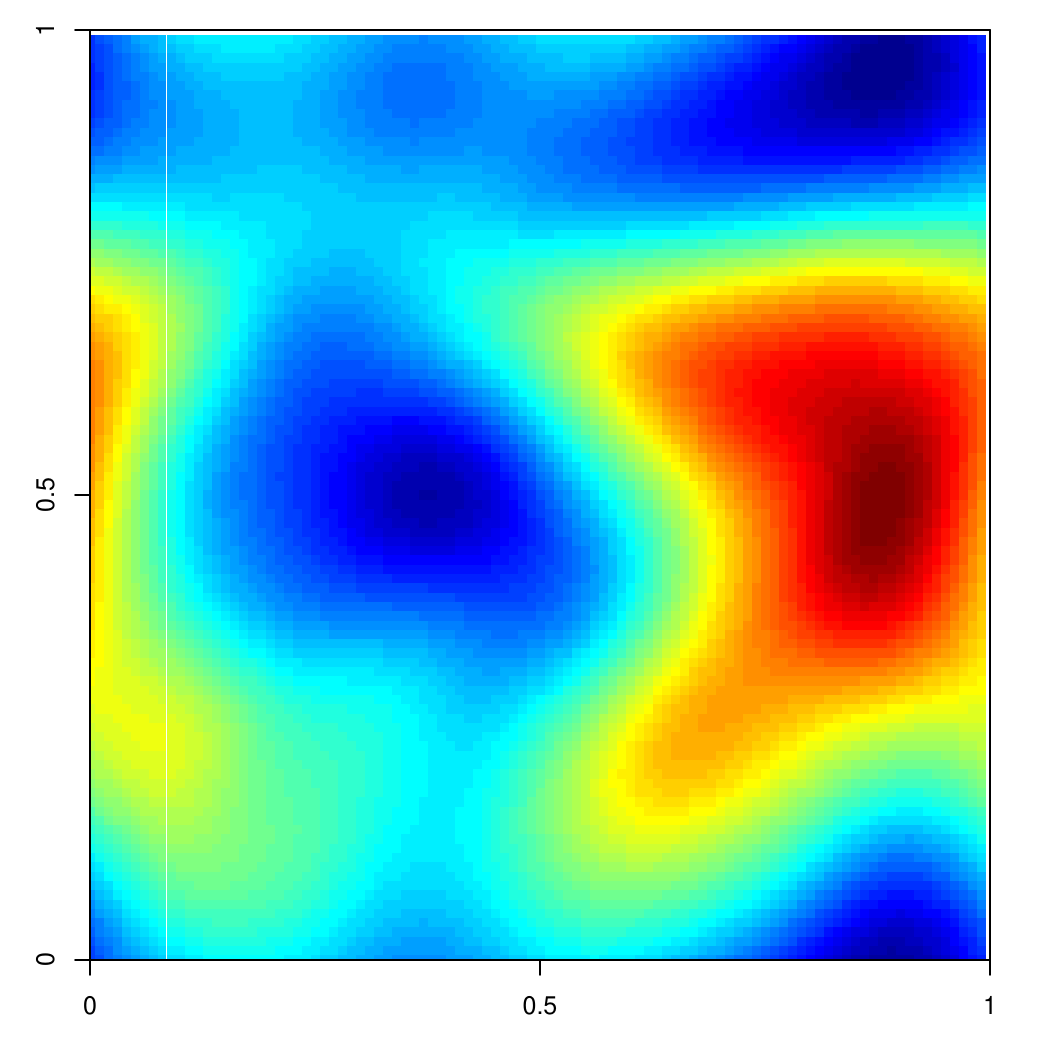}}}
\hspace{6mm}
\subfloat[
]{\label{fig_0d}\includegraphics[width=.2\linewidth]{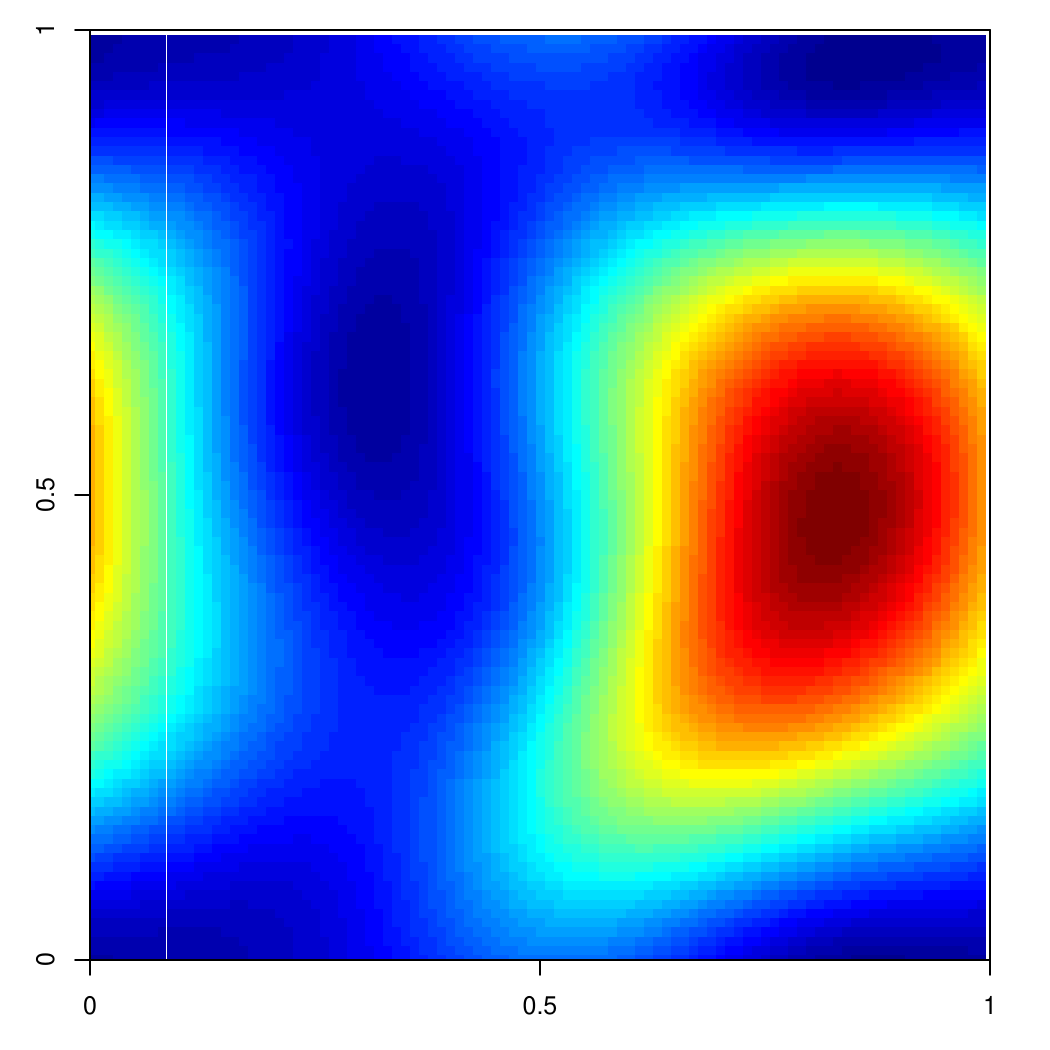}}
\captionsetup{aboveskip=8pt}
\caption{A comparison of the true, hidden state $u(\boldsymbol{x},t)$ (Fig. \ref{fig_0a}) and the optimal state estimate $\hat{u}_{\theta,\boldsymbol{o}}(\boldsymbol{x},t)$ obtained in three possible scenarios: using the true parameters and an optimal sensor placement (Fig. \ref{fig_0b}), using the true parameters but a sub-optimal sensor placement (Fig. \ref{fig_0c}), and using an optimal sensor placement but the incorrect parameters (Fig. \ref{fig_0d}). In this example, the hidden state is only accurately reconstructed in the first scenario, when the model parameters are successfully estimated \textbf{and} the sensors are optimally placed. \color{black} The parameters values and sensor placements used to generate these figures are provided in Appendix \ref{app:fig0}. \color{black}}
\label{fig0}
\end{figure}



\subsection{Literature Review}
\label{subsec:related_works}
\subsubsection{Parameter Estimation}

\color{black} The study of parameter estimation for infinite dimensional stochastic differential equations was initiated in the late 1960s (e.g., \cite{Kubrusly1977,Polis1982}), and has since been the subject of numerous papers and several monographs (e.g.,  \cite{Banks2012,Cialenco2018,Lototsky2009} and references therein). Although the majority of literature on this subject has been written for fully observed processes, several authors have also consider the `partially observed' case, in which observations of the infinite dimensional system are corrupted by some additional noise process \cite{Aihara1992,Aihara1988,Aihara1994,Aihara1991,Bagchi1981,Bagchi1984}.
\color{black} Among the methods considered for this problem, those based on the maximum likelihood principle are perhaps the most ubiquitous. 
In the offline setting, maximum likelihood (ML) methods seek the value of $\theta$ that maximises the likelihood of the observations, or incomplete data log-likelihood, after some fixed time-period $T$. In particular, the maximum likelihood estimator (MLE) is defined as $\smash{\hat{\theta}_T =  \argmax_{\theta\in\Theta} \mathcal{L}_T(\theta,\boldsymbol{o})}$.
The asymptotic properties of this estimator, including asymptotic consistency, asymptotic efficiency, and asymptotic normality, have been studied by various authors 
(e.g., \cite{Aihara1991,Bagchi1981,Bagchi1984,Balakrishnan1975,Kim1996,Koski1986,Mohapl1994}).

In this paper, we will primarily be concerned with online parameter estimation methods, which recursively estimate the unknown model parameters based on the continuous stream of observations (e.g., \cite{Baumeister1997}). In comparison to classical methods, which process the observed data in a batch fashion, online methods perform inference in real time, can track changes in parameters over time, are more computationally efficient, and have significantly smaller storage requirements. We will focus, in particular, on recursive maximum likelihood (RML) methods, which recursively seek the value of $\theta$ which maximises the asymptotic log-likelihood, defined as
$\smash{\tilde{\mathcal{L}}(\theta,\boldsymbol{o}) = \lim_{t\rightarrow\infty}\frac{1}{t}\mathcal{L}_t(\theta,\boldsymbol{o})}$. 

\color{black} Perhaps somewhat surprisingly, a rigorous treatment of this approach in the infinite-dimensional case remains an open problem. With this in mind, let us provide a brief overview of existing results in the finite-dimensional case (e.g., \cite{Gerencser1984,Gerencser2009,Surace2019}).  \color{black}
\color{black}
The asymptotic properties of such methods for partially observed, discrete-time systems have been studied extensively (e.g., \cite{Collings1998,Doucet2003,Krishnamurthy2002,LeGland1995,LeGland1997,Poyiadjis2011,Schwartz2019,Tadic2010,Tadic2018}). In comparison, the partially observed, continuous-time case has received relatively little attention.\footnote{We do not attempt to review the fully observed, continuous-time case here. The interested reader is referred to \cite{Levanony1994,Sirignano2017a} and references therein.} This method was first proposed by Gerencser et al. \cite{Gerencser1984}, who derived a RML estimator for the parameters of a partially observed linear diffusion process using the It\^o-Venzel formula (e.g., \cite{Ventzel1965}), and provided an almost sure convergence (a.s.) result for this estimator without proof. 
This analysis was later extended in \cite{Gerencser2009}, in which the authors established the a.s. convergence of a modified version of the estimator in \cite{Gerencser1984}, which included an additional resetting mechanism. 
The use of a continuous-time RML method for online parameter estimation was more recently revisited by Surace and Pfister \cite{Surace2019}, who derived an RML estimator for the parameters of a partially observed non-linear diffusion process, and established the a.s. convergence of this estimator under appropriate conditions on the latent state, the filter, and the filter derivative. This extended the results in \cite{Sirignano2017a} to the partially observed setting. 
The use of a continuous-time RML method for non-linear partially observed diffusion processes was also considered in  \cite{Ljung1987,Moura1986}. In these papers, however, in addition to the model parameters, the hidden state was estimated via maximum likelihood, rather than the usual filtering paradigm.

\subsubsection{Optimal Sensor Placement}


The problem of optimal sensor placement for state estimation in infinite dimensional, partially observed, linear stochastic dynamical systems, has been studied by a large number of authors, and in a wide variety of contexts. Arguably the first mathematically rigorous treatment of this problem was provided by Bensoussan \cite{Bensoussan1971,Bensoussan1972}, who formulated it as an application of optimal control on the infinite dimensional Ricatti equation governing the covariance of the optimal filter. This extended the results in \cite{Athans1972}, in which similar conditions were obtained in the finite dimensional case. For a review of other early results on the sensor placement problem, we refer to \cite{Kubrusly1985}. Under this framework, sensor locations are treated as control variables, and the optimal sensor locations are defined as the minima of a suitable objective function, typically defined as the trace of the covariance at some finite time (e.g., \cite{Bensoussan1971,Bensoussan1972,Cannon1971,Chen1975,Curtain1978,Omatu1978,Wu2016}), or the integral of the trace of the covariance over some finite time interval (e.g. \cite{Burns2015,Chen1975,Herring1974,Korbicz1994}). Alternatively, one can define cost functions with respect to parameter estimation considerations (e.g., \cite{Patan2012,Ucinski2005}). In any case, one then has $\smash{\hat{\boldsymbol{o}}_T = \argmin_{\boldsymbol{o}\in\mathcal{O}}\mathcal{J}_T(\theta,\boldsymbol{o})}$.

 Recently, and in the spirit of Bensoussan's original approach, Burns et al. \cite{Burns2015a,Burns2011,Burns2015,Burns2009,Hintermuller2017,Rautenberg2010} have provided a rigorous general framework for determining optimal location and trajectories of sensor networks for linear stochastic distributed parameter systems. In particular, in \cite{Burns2015}, the optimisation problem is precisely formulated as the minimisation of a functional involving the trace of a solution to the integral Ricatti equation, with constraints given by the allowed trajectories of the sensor network. The existence of Bochner integrable solutions to this equation, and thus the existence of optimal sensor locations, is established in \cite{Burns2015a,Burns2011}. Meanwhile, in \cite{Burns2015}, a Galerkin type numerical scheme for the finite dimensional approximation of these solutions is proposed, for which convergence is proved in $\smash{L_p}$ norm. 

Several authors have also considered optimal sensor placement with respect to asymptotic versions of these objective functions (e.g., \cite{Aidarous1978,Amouroux1978,Rafajlowicz1984,Tang2017,Yu1973,Zhang2018}). In this case, the optimal sensor placements are obtained, possibly recursively, as the minima of the asymptotic objective function $\smash{\tilde{\mathcal{J}}(\theta,\boldsymbol{o}) = \frac{1}{t}\lim_{t\rightarrow\infty}\mathcal{J}_t(\theta,\boldsymbol{o})}$. 
Most recently, Zhang and Morris \cite{Zhang2018} consider minimisation of the trace of the (mild) solution of the infinite dimensional algebraic Ricatti equation as a sensor placement criterion. In particular, they prove that the trace of the solution to this equation minimises the steady-state error variance, and thus represents an appropriate design objective. They also establish the existence of solutions (i.e., optimal sensor locations) to the corresponding optimal sensor placement problem, as well as the convergence of appropriate finite-dimensional approximations to these solutions, using results previously obtained for optimal actuator locations in \cite{Morris2011}. 

\subsection{Contributions}
In this paper, we present a principled method for performing joint online parameter estimation and optimal sensor placement for a partially observed infinite-dimensional linear diffusion process.
\color{black} We formulate this as a bilevel optimisation problem, in which the objective is to obtain estimates $\hat{\theta}\in\Theta$ and $\hat{\boldsymbol{o}}(\hat{\theta})\in\Pi^{n_{y}}$ 
such that
\begin{alignat}{2}
\hat{\theta}&= \argmax_{\theta\in\Theta} \tilde{\mathcal{L}}\big({\theta},\hat{\boldsymbol{o}}(\theta)\big)~~~,~~~\hat{\boldsymbol{o}}(\theta)&&= \argmin_{\boldsymbol{o}\in\Pi^{n_{y}}} \tilde{\mathcal{J}}\big(\theta,\boldsymbol{o}\big). 
\vspace{-1mm}
\end{alignat}
\\[-2mm]
\color{black}
where, as previously, $\tilde{\mathcal{L}}(\cdot)$ denotes the asymptotic log-likelihood of the observations, and $\tilde{\mathcal{J}}(\cdot)$ denotes an asymptotic sensor placement objective function. 
It should be noted that the choice of $\tilde{\mathcal{L}}(\cdot)$ rather than $\tilde{\mathcal{J}}(\cdot)$ as the upper-level objective function here is arbitrary. In particular, our methodology remains valid if we consider the bilevel optimisation problem in which the upper-level and lower-level objective functions are reversed. The upper-level objective function can thus be chosen on a case-by-case basis, based on which criterion (parameter estimation or optimal sensor placement) one wishes to prioritise. 

To solve this problem, we propose a novel continuous-time, two-timescale stochastic gradient descent algorithm. This algorithm can be seen as a formal extension of the authors' previous work in \cite{Sharrock2020a} to the setting in which the latent state is infinite-dimensional. We establish, using the theoretical results in \cite{Sharrock2020a}, almost sure convergence of the online parameter estimates and recursive optimal sensor placements generated by a suitable finite dimensional approximation of this algorithm to the stationary points of the asymptotic log-likelihood and the asymptotic filter covariance, respectively.\footnote{Under reasonable additional assumptions, it is possible to extend this analysis to show that the online parameter estimates are local maxima of the asymptotic log-likelihood, and the recursive optimal sensor placements are local minima of the asymptotic filter covariance \cite{Sharrock2020a}.} We then provide several detailed numerical case studies, illustrating the performance of this method for the partially observed stochastic advection diffusion equation. Our numerical results indicate that the algorithm is highly effective, and can be applied in cases involving static and dynamic parameters, multiple noise and bias parameters, different specifications of the sensor placement objective, and different specifications of the upper and lower-level objective functions.

\subsection{Paper Organisation}
The remainder of this paper is organised as follows. \color{black} In Section \ref{sec:equation}, we precisely formulate the stochastic advection-diffusion equation as a functional stochastic differential equation on an appropriate separable Hilbert space. In Section \ref{sec:param}, we present the two-timescale stochastic gradient descent algorithm for joint parameter estimation and optimal sensor placement. We present our methodology in a generic abstract framework, and thus in principle it could be applied to partially observed stochastic partial differential equations other than the stochastic advection-diffusion equation. \color{black} In Section \ref{sec:numerics}, we provide several numerical examples illustrating the application of the proposed methodology. Finally, in Section \ref{sec:conclusions}, we provide some concluding remarks.

\section{The Partially Observed Stochastic Advection-Diffusion Equation}
\label{sec:equation}

In this section, we provide some background on the stochastic advection-diffusion equation. In particular, we outline how this SPDE can be defined as a functional evolution equation on an appropriate separable Hilbert space (see also, e.g., \cite{Banks1984,Burns2015,Wu2016}). We restrict our attention to the case of periodic boundary conditions following the treatment in \cite{Sigrist2015}. \color{black} This choice is largely motivated by expositional convenience, and the ability to efficiently perform numerical approximations using the Fast Fourier Transform (FFT). 
We should emphasise, however, that the subsequent joint online parameter estimation and optimal sensor placement algorithm in Section \ref{sec:param} is generic, and does not require this assumption. \color{black}

\subsection{Preliminaries}
We will suppose that the region of interest is the unit torus $\Pi := [0,1]^2$, with $\boldsymbol{x}=(x_1,x_2)^T\in\Pi$ a point on this space. We are interested in a space-time varying scalar field $u:\Pi\times [0,\infty)\rightarrow\mathbb{R}$, and will write $u(\boldsymbol{x},t)$ to denote the value of the field at spatial location $\boldsymbol{x}\in\Pi$ and time $t\in[0,\infty)$. We will assume that this field satisfies periodic boundary conditions. 
The function space of interest is given by $\mathcal{H} = L^{\text{per.}}_2(\Pi)$, the space of periodic square-integrable functions on $\Pi=[0,1]^2$. 

\color{black} It is natural to work with the Fourier characterisation of this space. In particular, suppose we write 
$\smash{\{\phi_{\boldsymbol{k}}\}_{\boldsymbol{k}\in\mathbb{Z}^2}}$ for the set of orthonormal Fourier basis functions for $\mathcal{H}$, namely, $\smash{\phi_{\boldsymbol{k}}(\boldsymbol{x}) = \exp(i\boldsymbol{k}^T \boldsymbol{x})}$. We can then write \color{black}
\color{black}
\begin{equation}
\mathcal{H}= \bigg\{\varphi : \varphi(\boldsymbol{x})= \sum_{\mathbf{k}\in\mathbb{Z}^2} \alpha_{\boldsymbol{k}} \phi_{\boldsymbol{k}}(\boldsymbol{x}) :\alpha_{-\boldsymbol{k}}=\overline{\alpha_{\boldsymbol{k}}}, \sum_{\mathbf{k}\in\mathbb{Z}^2}\alpha^2_{\boldsymbol{k}}(t)<\infty\bigg\}. \label{fourier_basis}
\end{equation}
\subsection{The Signal Equation}
\color{black}
Using standard results on infinite dimensional systems (e.g., \cite{Curtain1995,Omatu1989}), we can formulate the stochastic advection-diffusion partial differential equation \eqref{SPDE} as a functional evolution equation on $\mathcal{H}$. Let $u(t)=u(\cdot,t) = \{u(\boldsymbol{x},t): \boldsymbol{x}\in\Pi \} \in\mathcal{H}$ denote the state of the infinite-dimensional system. Then we can write
\begin{equation}
\mathrm{d}u(t) = \mathcal{A}(\theta) u(t)\mathrm{d}t + \mathcal{B}\mathrm{d}v_{\theta}(t), ~~~u(0) = u_0 \in\mathcal{H} \label{eq_sig}
\end{equation}
where $\theta\in\Theta\subset\mathbb{R}^{n_{\theta}}$ is an $n_{\theta}$-dimensional parameter, $\mathcal{A}(\theta)$ and $\mathcal{B}$ are abstract operators to be defined below, $v_{\theta}(t) = v_{\theta}(\cdot,t) = \{v_{\theta}(\boldsymbol{x},t):\boldsymbol{x}\in\Pi\}$ is a space-time Brownian motion, and $u_0$ is a $\mathcal{H}$-valued Gaussian random variable, which is independent of $v_{\theta}$. 
We are interested in weak solutions of this equation, to be understood path-wise on the complete probability space $(\Omega,\mathcal{F},\mathbb{P})$. 

The terms in this equation are defined explicitly as follows. Firstly, $\mathcal{A}(\theta):\mathcal{D}(\mathcal{A}(\theta))\rightarrow\mathcal{H}$ is the two-dimensional advection diffusion operator, defined according to
\begin{equation}
\mathcal{A}(\theta) \varphi = -\sum_{i=1}^2 \mu_{i}\frac{\partial \varphi}{\partial x_i}+\sum_{i,j=1}^{2}\frac{\partial}{\partial x_i} \left(\Sigma_{i,j} \frac{\partial \varphi}{\partial x_j}  \right) -\zeta\varphi
~,~~~\varphi\in\mathcal{D}(\mathcal{A}(\theta))
\label{A_operator}
\end{equation} 
where $\smash{\mathcal{D}(\mathcal{A}(\theta)) = \{\varphi\in\mathcal{H}: \tfrac{\partial \varphi}{\partial x_i},\tfrac{\partial^2\varphi}{\partial x_i\partial x_j}\in L_2(\Pi)~, i,j=1,2\}}$, and $(\mu_i)_{i=1,2}$, $(\Sigma_{i,j})_{i,j=1,2}$, $\zeta$ are the parameters defined in Section \ref{sec:introduction}. We remark that this operator generates an exponentially stable $C_0$-semigroup over $\mathcal{H}$ (e.g., \cite{Burns2015,Sell2002}).\footnote{For ease of exposition, we have assumed here that $\mathcal{A}(\theta)$ is time-invariant. It is straightforward, however, to extend all of the results in this paper to the case of a time-dependent operator $\mathcal{A}(\theta,t)$. In this case, we would also require some standard assumptions on the regularity of the map $t\rightarrow\mathcal{A}(\theta,t)$ (e.g., \cite{Lions1971,Pazy1983,Tanabe1960}).}
Meanwhile, $\mathcal{B}:\mathcal{H}\rightarrow\mathcal{H}$ is a disturbance operator defined, for some $b\in\mathcal{H}$, via
\begin{equation}
\mathcal{B} \varphi = b(\boldsymbol{x}) \varphi~,~~~\varphi\in\mathcal{H}.
\end{equation}
Finally, $v_{\theta}(t)$ is a $\mathcal{H}$-valued Wiener process with incremental covariance operator $\smash{\mathcal{Q}}(\theta):\mathcal{H}\rightarrow\mathcal{H}$. We will assume that this covariance operator satisfies $\smash{\mathcal{Q}(\theta)\phi_{\boldsymbol{k}}= \eta_{\boldsymbol{k}}^2(\theta)\phi_{\boldsymbol{k}}}$ for all $\boldsymbol{k}\in\mathbb{Z}^2$, for some bounded sequence of real numbers $\smash{\{\eta^2_{\boldsymbol{k}}(\theta)\}_{\boldsymbol{k}\in\mathbb{Z}^2}}$ satisfying $\eta_{-{\boldsymbol{k}}}(\theta)=\eta_{\boldsymbol{k}}(\theta)$ and $\sum_{\boldsymbol{k}}\eta_{\boldsymbol{k}}^2(\theta)<\infty$. We thus work with a diagonal covariance operator with respect to the Fourier basis, although other choices could easily be considered. It follows from standard results (see, e.g., \cite{Curtain1978a,DaPrato2014}) that 
\begin{equation}
v_{\theta}(\boldsymbol{x},t) := \sum_{\boldsymbol{k}\in\mathbb{Z}^2}\eta_{\boldsymbol{k}}(\theta)\phi_{\boldsymbol{k}}(\boldsymbol{x}) z_{\boldsymbol{k}}(t), 
\end{equation}
where $\smash{\{z_{\boldsymbol{k}}(t)\}_{\boldsymbol{k}\in\mathbb{Z}}}$ are a set of suitably defined independent Brownian motions (see, e.g., \cite{Llopis2017} for a precise definition). Following \cite{Lindgren2011,Sigrist2015}, we will assume that $\smash{\{\eta^2_{\boldsymbol{k}}(\theta)\}_{\boldsymbol{k}\in\mathbb{Z}^2}}$ are given by 
\begin{equation}
\eta_{\boldsymbol{k}}(\theta) = \frac{\sigma}{2\pi}\left(\boldsymbol{k}^T\boldsymbol{k}+\frac{1}{\rho_0^2}\right)^{-\nu},
\end{equation}
where $\sigma>0$ is a {marginal variance} parameter, $\rho_0>0$ is a {spatial range} parameter, and $\nu>0$ is a smoothness parameter. This yields a noise process with the Mat\'ern covariance function in space, which is perhaps the most widely used covariance function in spatial statistics \cite{Cressie1993,Handcock1993,Stein1999}. In many applications, the smoothness parameter is not identifiable \cite{Lindgren2011,Sigrist2015}. Thus, as in \cite{Sigrist2015}, we will assume that $\nu=1$. In principle, however, other values of $\nu$ could also be considered. This particular choice corresponds to the so-called Whittle covariance function in space, which can arguably be regarded as `the elementary correlation in two dimensions' \cite{Whittle1954}. \color{black} 

\subsection{The Spectral Signal Equation}
Using the Fourier characterisation, it is possible to write \color{black} $u(\boldsymbol{x},t)$ \color{black} as
\begin{equation}
\color{black} u(\boldsymbol{x},t) = \sum_{\textbf{k}\in\mathbb{Z}^2} \alpha_{\boldsymbol{k}}(t)\phi_{\boldsymbol{k}}(\boldsymbol{x}) \color{black}
~,~~~\alpha_{\boldsymbol{k}}(t) = \langle u(t),\phi_{\boldsymbol{k}} \rangle = \int_{\Pi} \color{black} u(\boldsymbol{x},t) \color{black} \overline{\phi_{\boldsymbol{k}}}(\boldsymbol{x})\mathrm{d}\boldsymbol{x}. \label{fourier_solution}
\end{equation}
\color{black} It is thus equivalent to consider the parametrisation of $u(t)$ via the set of 
 Fourier coefficients $\smash{\{
\alpha_{\boldsymbol{k}}(t)
 \}_{\boldsymbol{k}\in\mathbb{Z}^2}}$. \color{black} Taking the inner product of both sides of the signal equation with $\phi_{\boldsymbol{k}}$, 
we see that the $\alpha_{\boldsymbol{k}}$'s 
obey the following infinite dimensional stochastic differential equation
\color{black}
\begin{equation}
\mathrm{d} \alpha_{\boldsymbol{k}}(t) = 
\sum_{\boldsymbol{j}\in\mathbb{Z}^2}  \lambda_{\boldsymbol{j},\boldsymbol{k}}(\theta)\alpha_{\boldsymbol{j}}(t) \mathrm{d}t+\sum_{\boldsymbol{j}\in\mathbb{Z}^2}  \xi_{\boldsymbol{j},\boldsymbol{k}} \eta_{\boldsymbol{j}}(\theta)\mathrm{d}z_{\boldsymbol{j}}(t)~,~~~\boldsymbol{k}\in\mathbb{Z}^2, \label{inf_dim_fourier}
\end{equation}
\color{black}
where \color{black} $\smash{\lambda_{\boldsymbol{j},\boldsymbol{k}}(\theta)=\langle  \mathcal{A}(\theta)\phi_{\boldsymbol{j}},\phi_{\boldsymbol{k}}\rangle = -\left(i\boldsymbol{j}^T\boldsymbol{\mu}+ \boldsymbol{j}^T\Sigma\boldsymbol{j}+\zeta\right)\delta_{\boldsymbol{j},\boldsymbol{k}}}$ and $\smash{\xi_{\boldsymbol{j},\boldsymbol{k}}=\langle  \mathcal{B}\phi_{\boldsymbol{j}},\phi_{\boldsymbol{k}}\rangle}$.
We will occasionally refer to this as the `spectral' signal equation. In our numerics, we will often assume that $\mathcal{B}$ is given by the identity operator, in which case we also have $\xi_{\boldsymbol{j},\boldsymbol{k}} = \delta_{\boldsymbol{j},\boldsymbol{k}}$. In this case, the spectral signal equation diagonalises completely; that is, the $\alpha_{\boldsymbol{k}}$'s evolve independently of one another. \color{black}
This parametrisation of the signal process is highly convenient, as it allows us to perform inference on a vector whose coordinates evolve according to a SDE, even if this vector happens to have infinite length. 

\subsection{The Observation Equation} 
We assume that the signal process cannot be observed directly, \color{black} but that instead we obtain a continuous sequence of noisy observations $y = \{y(t)\}_{t\geq 0}$, taking values in $\mathbb{R}^{n_y}$, via a set of $n_y$ sensors located at $\smash{\boldsymbol{o}=\{\boldsymbol{o}_i\}_{i=1}^{n_{y}}\in\Pi^{n_{y}}}$. In particular, the observations are generated \color{black} according to \color{black}
\begin{equation}
\mathrm{d}y(t) = \mathcal{C}(\theta,\boldsymbol{o})u(t)\mathrm{d}t+ \mathrm{d}w_{\boldsymbol{o}}(t)~,~~~y(0)=0, \label{obs_eq}
\end{equation}
where $\mathcal{C}(\theta,\boldsymbol{o}):\mathcal{H}\rightarrow\mathbb{R}^{n_y}$ is a bounded linear operator to be specified below, and $w_{\boldsymbol{o}}(t)$ is a $\mathbb{R}^{n_y}$ valued Wiener process with incremental covariance $\mathcal{R}(\boldsymbol{o})\in\mathbb{R}^{n_{y}\times n_{y}}$, 
which is independent of both $v_{\theta}$ and $u_0$.
While the use of a linear observation equation is somewhat restrictive, it does encompass most typical observation schemes used in practice. 
 
\color{black} We will further assume that each sensor provides a noisy, possibly biased, average of the latent signal around its current location, in which \color{black} case the observation operator is defined by 
\begin{align}
\mathcal{C}(\theta,\boldsymbol{o})\varphi= \begin{pmatrix} 
 \mathcal{C}_1(\theta,\boldsymbol{o}_1)\varphi  \\ \vdots \\ \mathcal{C}_{n_{y}}(\theta,\boldsymbol{o}_{n_y})\varphi 
\end{pmatrix}
~,~~\mathcal{C}_i(\theta,\boldsymbol{o}_i)\varphi = \frac{\int_{\Pi} \mathcal{K}_{\boldsymbol{o}_i}(\boldsymbol{x})\varphi(\boldsymbol{x})\mathrm{d}\boldsymbol{x}}{\int_{\Pi} \mathcal{K}_{\boldsymbol{o}_i}(\boldsymbol{x})\mathrm{d}\boldsymbol{x}}+\beta_i~,~~~\varphi\in\mathcal{H}
\end{align}
where $K_{\boldsymbol{o}_i}:\Pi\rightarrow\Pi$ are suitably chosen weighting functions, which decrease as $|\boldsymbol{x}-\boldsymbol{o}_i|$ increases, and $\beta_i\in\mathbb{R}$ are bias terms. \color{black} In our numerics, \color{black} we restrict our attention to the case in which each sensor provides an \color{black} unweighted \color{black} average of the latent signal process within a \color{black} small \color{black} fixed region of its current location (e.g., \cite{Burns2015,Llopis2017}). 
This corresponds to the choice 
\begin{equation}
\mathcal{K}_{\boldsymbol{o}_i}(\boldsymbol{x}) = \mathds{1}\left\{\boldsymbol{x}\in\Pi:|\boldsymbol{x}-\boldsymbol{o}_i|\leq r \right\},~~~r>0.
\end{equation}
For simplicity, we will also assume that the sensors are {independent}, in which case the covariance matrix $\mathcal{R}(\boldsymbol{o})$ reduces to a diagonal matrix; and that the sensors can be categorised into $p_1$ distinct `noise classes', and into $p_2$ distinct `bias' classes, where $1\leq p_1,p_2\leq n_y$. By this, we mean that all observations generated by sensors belonging to a particular class have the same variance (or the same bias).

\newpage
\section{Joint Online Parameter Estimation and Optimal Sensor Placement}
\label{sec:param}

\color{black} In this section, we present the joint online parameter estimation and optimal sensor placement algorithm for a generic \color{black} partially observed infinite dimensional linear diffusion process \color{black} of the form \color{black}
\begin{alignat}{2}
\mathrm{d}u(t) &= \mathcal{A}(\theta)u(t)\mathrm{d}t + \mathcal{B}\mathrm{d}v_{\theta}(t)~,~~~&&u(0)=u_0 \label{signal_finite_dim_inf} \\
\mathrm{d}y(t) &= \mathcal{C}(\theta,\boldsymbol{o})u(t)\mathrm{d}t + \mathrm{d}w_{\boldsymbol{o}}(t)~,~~~&&y(0)=0,\label{obs_finite_dim_inf}
\end{alignat}
\color{black} where $\mathcal{A}(\theta):\mathcal{H}\rightarrow\mathcal{H}$ is the infinitesimal generator of a $C_0$-semigroup over a separable Hilbert space $\mathcal{H}$, $\mathcal{B}:\mathcal{H}\rightarrow\mathcal{H}$ and $\mathcal{C}(\theta,\boldsymbol{o}):\mathcal{H}\rightarrow\mathbb{R}^{n_y}$ are bounded linear operators, $v_{\theta}(t)$ and $w_{\boldsymbol{o}}(t)$ are independent $\mathcal{H}$ and $\mathbb{R}^{n_y}$-valued Wiener processes with incremental covariances $\mathcal{Q}(\theta)$ and $\mathcal{R}(\boldsymbol{o})$, and $u_0$ is a $\mathcal{H}$-valued Gaussian random variable, independent of $v_{\theta}$ and $w_{\boldsymbol{o}}$. Clearly, this abstract framework includes the partially observed stochastic advection diffusion equation defined in Section \ref{sec:equation}. \color{black}

\subsection{The Infinite-Dimensional Kalman-Bucy Filter}
\label{sec:filtering}
We begin by briefly review the infinite dimensional filtering problem. This refers to the problem of determining the conditional law of the latent signal process, given the history of observations $\smash{\mathcal{F}_t^Y\hspace{-1mm} =\hspace{-.2mm} \sigma\{y(s)\hspace{-.2mm}:\hspace{-.2mm}s\in[0,t]\}}\hspace{-.2mm}$. In the linear Gaussian case, it is well known that the conditional distribution of the latent signal is Gaussian, and thus determined uniquely by its mean and covariance. These quantities \color{black} are given in closed form by \color{black} the infinite-dimensional Kalman-Bucy filter (e.g., \cite{Bensoussan1971,Curtain1975,Curtain1978a}). 

In particular, suppose that we write $\hat{{u}}(\theta,\boldsymbol{o},t)$ for the conditional mean of the latent signal, and $\Sigma(\theta,\boldsymbol{o},t)$ for its conditional covariance. Then
$\hat{u}(\theta,\boldsymbol{o},t)$ is a mild solution of the stochastic evolution equation \cite{Curtain1978a} 
\begin{align}
\mathrm{d}\hat{u}(\theta,\boldsymbol{o},t) &= \mathcal{A}(\theta)\hat{{u}}(\theta,\boldsymbol{o},t)\mathrm{d}t \label{inf_dim_mean}  \\
&\hspace{5mm}+\Sigma(\theta,\boldsymbol{o},t) \mathcal{C}^{\dagger}(\theta,\boldsymbol{o})\mathcal{R}^{-1}(\boldsymbol{o})\big(\mathrm{d}y(t) - \mathcal{C}(\theta,\boldsymbol{o})\hat{{u}}(\theta,\boldsymbol{o},t)\mathrm{d}t\big), \nonumber 
\end{align}
and $\Sigma(\theta,\boldsymbol{o},t)$ is a weak solution of the operator Ricatti equation \cite{Bensoussan2007,Curtain1978a} 
 \begin{align}
\dot{\Sigma}(\theta,\boldsymbol{o},t)&=\mathcal{A}(\theta) \Sigma(\theta,\boldsymbol{o},t) + \Sigma(\theta,\boldsymbol{o},t)\mathcal{A}^{\dagger}(\theta) \label{inf_dim_ricatti} \\
&\hspace{5mm}+\mathcal{B}\mathcal{Q}(\theta)\mathcal{B}^{\dagger}-\Sigma(\theta,\boldsymbol{o},t) \mathcal{C}^{\dagger}(\theta,\boldsymbol{o})\mathcal{R}^{-1}(\boldsymbol{o})\mathcal{C}(\theta,\boldsymbol{o})\Sigma(\theta,\boldsymbol{o},t). \nonumber  
 \end{align}

\subsection{Online Parameter Estimation}
We can now turn our attention to the problem of online parameter estimation. We will suppose that the model generates observations according to a true, but unknown, static parameter \color{black} $\smash{\theta^{*}}$. \color{black} Our objective is to obtain an estimator of the true parameter which is both $\mathcal{F}_t^Y$-measurable and recursively computable. That is, an estimator which can be computed online using the continuous stream of observations, without revisiting the past. 

For this task, \color{black} we will follow an approach based on the maximum likelihood principle. Naturally, \color{black}  we thus require an expression for the log-likelihood of the observations, or incomplete data log-likelihood, for a partially observed infinite-dimensional linear diffusion process. Under appropriate conditions, this can be obtained as (e.g., \cite{Aihara1988,Bagchi1984,Balakrishnan1973,Balakrishnan1975a,Liptser2001})
\begin{align}
\mathcal{L}_t(\theta,\boldsymbol{o}) 
= \int_0^t \left\langle \mathcal{R}^{-1}(\boldsymbol{o}) \mathcal{C}(\theta,\boldsymbol{o})\hat{u}(\theta,\boldsymbol{o},s),\mathrm{d}y(s)\right\rangle -\frac{1}{2}\int_0^t ||\mathcal{R}^{-\frac{1}{2}}(\boldsymbol{o})\mathcal{C}(\theta,\boldsymbol{o})\hat{u}(\theta,\boldsymbol{o},s)||^2\mathrm{d}s, \label{log_lik} 
\end{align}
where \color{black} $\smash{\langle\cdot,\cdot\rangle}$ denotes the standard inner product on $\mathbb{R}^{n_y}$, and $\smash{||\varphi||^2 = \langle \varphi,\varphi\rangle}$. \color{black} 
In the online setting, a standard approach is to recursively seek the value of $\theta$ which maximises the asymptotic log-likelihood, \color{black} namely, 
$\smash{\tilde{\mathcal{L}}(\theta,\boldsymbol{o}) = \lim_{t\rightarrow\infty}\frac{1}{t}\mathcal{L}_t(\theta,\boldsymbol{o})}$. \color{black}
This optimisation problem can be tackled using a continuous-time stochastic gradient ascent algorithm, whereby the parameters follow a noisy ascent direction given by the integrand of the gradient of the log-likelihood, evaluated with the current parameter estimate (see \cite{Gerencser2009,Surace2019} in the finite-dimensional case). 
In particular, initialised at ${\theta}_0\in\Theta$, the parameter estimates can be generated according to \\[-4mm]
\begin{equation}
\mathrm{d}{\theta}(t) = \left\{ \begin{array}{lll} \gamma(t) \big[\mathcal{C}(\theta,\boldsymbol{o})\hat{u}^{\theta}(\theta,\boldsymbol{o},t)\big]^T\mathcal{R}^{-1}(\boldsymbol{o})\big[\mathrm{d}y(t)-\mathcal{C}(\theta,\boldsymbol{o})\hat{u}(\theta,\boldsymbol{o},t)\mathrm{d}t\big] \big|_{\theta=\theta(t)} & \hspace{-3mm}, & \hspace{-2.5mm} {\theta}(t)\in\Theta \\ 0 & \hspace{-3mm}, & \hspace{-2.5mm} {\theta}(t)\not\in\Theta  \end{array}\right. \hspace{-3mm} \label{eq_sde}
\end{equation}
\color{black} where $\gamma:\mathbb{R}_{+}\rightarrow\mathbb{R}_{+}$ is a non-negative, non-increasing, real function, known as the learning rate, and we have written $\hat{u}^{\theta}(\theta,\boldsymbol{o},t)$ for the `filter derivative' of the conditional mean. By this, we mean that this process is the solution, interpreted in the appropriate sense, of the equation obtained upon formal differentiation of \eqref{inf_dim_mean} with respect to $\theta$. \color{black} This algorithm is commonly referred to as recursive maximum likelihood (RML). 

\subsection{Optimal Sensor Placement} \label{sec:opt_sens}
We now review the optimal sensor placement problem. Here, the objective is to obtain an estimator of the set of $n_y$ sensor locations \color{black} ${\boldsymbol{o}}^{*} = \{{\boldsymbol{o}}^{*}_i\}_{i=1}^{n_{y}}$ \color{black} which are optimal with respect to some pre-determined criteria, possibly subject to constraints. Once more, we require our estimator to be $\mathcal{F}_t^Y$-measurable and recursively computable. 
A standard approach to this problem is to first define a suitable objective function, say $\mathcal{J}(\theta,\cdot):\Omega^{n_{y}}\rightarrow\mathbb{R}$, and then to define the optimal sensor placement as ${ \color{black}  \boldsymbol{o}}^{*} \color{black} = \argmin_{\boldsymbol{o}\in\Omega^{n_{y}}} \mathcal{J}(\theta,\boldsymbol{o}).$

In this paper, we are interested in determining the sensor placement which minimises the uncertainty in our optimal state estimate. In this case, an appropriate objective function is (e.g., \cite{Burns2015,Chen1975,Herring1974,Korbicz1994}),
\begin{equation}
\mathcal{J}_t(\theta,\boldsymbol{o}) = \int_0^{t} \mathrm{Tr}\left[\mathcal{M}(s){\Sigma}(\theta,\boldsymbol{o},s)\right]\mathrm{d}s \label{obj_func}
\end{equation}
where ${\Sigma}(\theta,\boldsymbol{o},s)$ is the weak solution of the operator Ricatti equation (\ref{inf_dim_ricatti}), and $\mathcal{M}(s):\mathcal{H}\rightarrow\mathcal{H}$ is a user-chosen, possibly time-dependent, bounded linear operator, which allows one to weight significant parts of the state estimate.
In the spirit of the previous section, in the online setting, to obtain the optimal sensor placement we can recursively seek the value of $\boldsymbol{o}$ which minimises the asymptotic objective function, namely \color{black} $\tilde{\mathcal{J}}(\theta,\boldsymbol{o})= \lim_{t\rightarrow\infty} \frac{1}{t}{\mathcal{J}}_t(\theta,\boldsymbol{o})$. 
\color{black}

Similarly to online parameter estimation, this optimisation problem can be tackled recursively using continuous-time stochastic gradient descent, whereby the sensor locations follow a noisy descent direction given by the integrand of the gradient of the objective function, evaluated with the current estimates of the sensor placements. In particular, initialised at ${\boldsymbol{o}}_0\in\Omega^{n_{y}}$,  estimates of the optimal sensor locations can be recursively generated according to
\begin{equation}
\mathrm{d}{\boldsymbol{o}}(t)= \left\{ \begin{array}{lll}  -\gamma(t)\mathrm{Tr}^{\boldsymbol{o}}\big[\mathcal{M}(t)\Sigma(\theta,\boldsymbol{o},t)\big]^T\mathrm{d}t \big|_{\boldsymbol{o}=\boldsymbol{o}(t)} & , & {\boldsymbol{o}}(t)\in\Omega^{n_{y}} \\ 0 & , & {\boldsymbol{o}}(t)\not\in\Omega^{n_{y}}  \end{array}\right.  \label{sensor_alg}
\end{equation}
where $\gamma:\mathbb{R}_{+}\rightarrow\mathbb{R}_{+}$ is the learning rate, and $\smash{\mathrm{Tr}^{\boldsymbol{o}}\left[\mathcal{M}(t)\Sigma(\theta,\boldsymbol{o},t)\right]=\nabla_{\boldsymbol{o}}\mathrm{Tr}\left[\mathcal{M}(t)\Sigma(\theta,\boldsymbol{o},t)\right]}$
denotes the `filter derivative' of the weighted trace of the conditional covariance with respect to the sensor placements.


We should remark that, if the true model parameters are known,  \color{black} then one can compute the asymptotic sensor placement objective function (and its gradient) prior to receiving any observations by solving the so-called {algebraic Ricatti equation} (see Appendix \ref{app:ARE}).
In this case, it would thus arguably be preferable to use a (non-stochastic) gradient descent algorithm on the asymptotic objective function directly in order to obtain the optimal sensor placement (e.g., \cite{Aidarous1978,Amouroux1978}). If, however, the true model parameters are {unknown}, then one can no longer compute the true asymptotic sensor placement objective function (or its gradient) prior to receiving any observations, since the true solution of the algebraic Ricatti equation (and thus the optimal sensor placement) depends on knowing the value of these parameters (see Figure \ref{fig_section2}).
This observation highlights the need to tackle the parameter estimation and optimal sensor placement problems together.
\color{black}

\begin{figure}[!h]
\captionsetup[subfloat]{captionskip=-.7pt}
\centering
  \subfloat[]{\label{fig_section2_a}\includegraphics[width=0.3\textwidth]{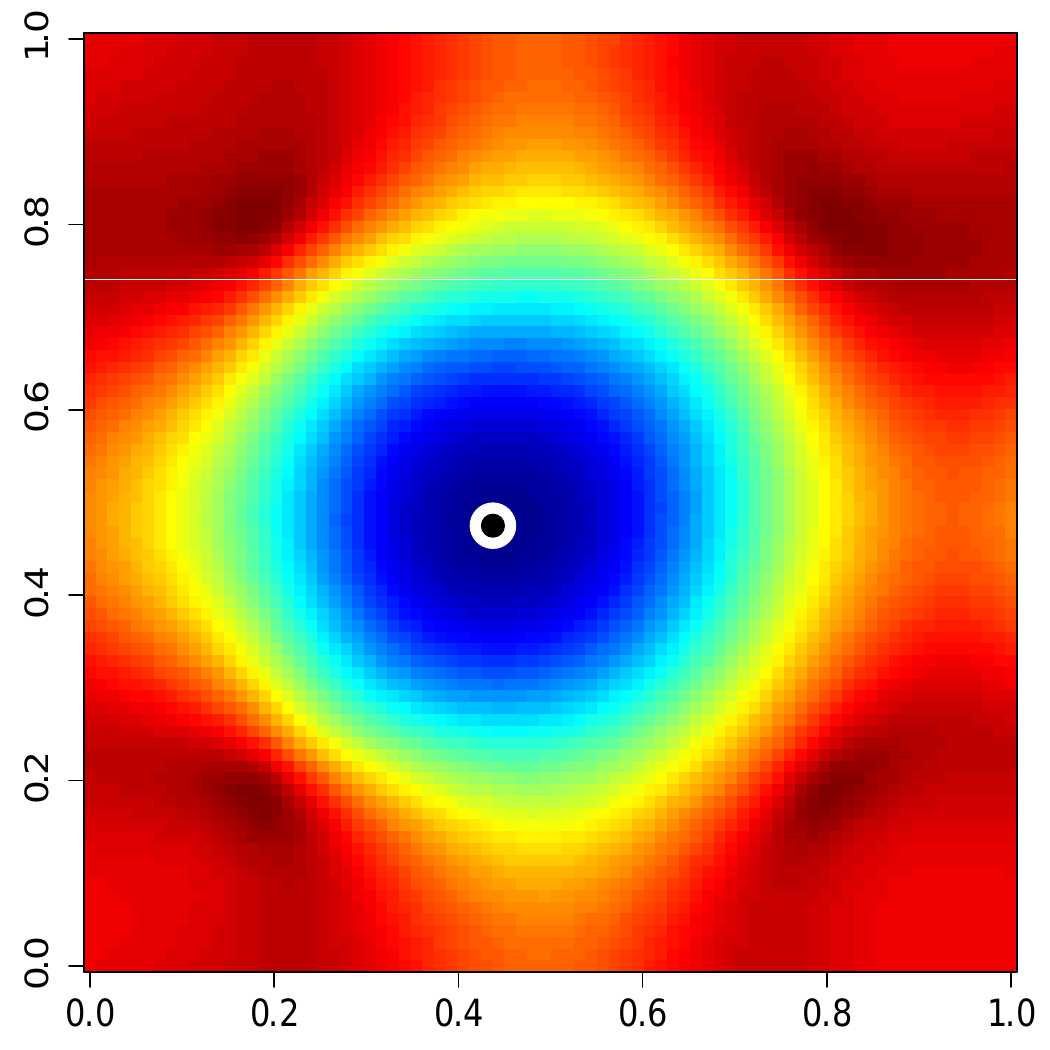}}
  \hspace{10mm}
  \subfloat[]{\label{fig_section2_b}\includegraphics[width=0.3\textwidth]{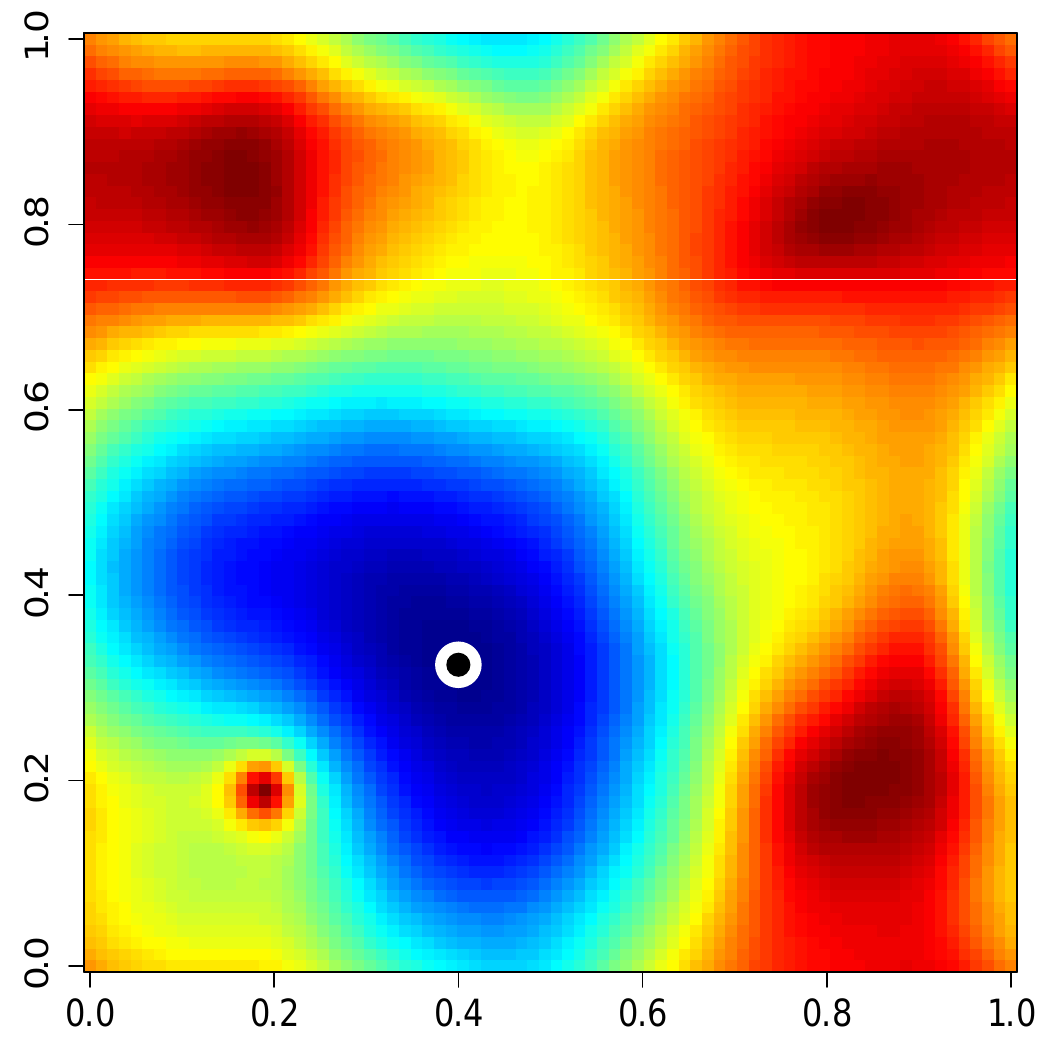}}
  \vspace{-1mm}
\caption{The `optimisation landscape'. Plots of the asymptotic sensor placement objective function, $\smash{\tilde{\mathcal{J}}}(\theta,\boldsymbol{o})$
, and the corresponding optimal sensor placement, ${\boldsymbol{o}}^{*}=\argmin_{\boldsymbol{o}\in\Omega}\tilde{\mathcal{J}}(\theta,\boldsymbol{o})$, for two possible specifications of the model parameters $\theta$.}
\label{fig_section2}
\end{figure}

\subsection{Joint Online Parameter Estimation and Optimal Sensor Placement}
\label{sec_joint_RML_OSP} 

We finally now turn our attention to the problem of joint online parameter estimation and optimal sensor placement. In the spirit of the previous two sections, we will formulate this as a bilevel optimisation problem, in which the objective is to obtain ${\theta}\in\Theta$, ${\boldsymbol{o}}\in\Omega^{n_{y}}$ which simultaneously maximise the asymptotic log-likelihood $\tilde{\mathcal{L}}(\theta,\boldsymbol{o})$, and minimise the asymptotic objective function $\tilde{\mathcal{J}}(\theta,\boldsymbol{o})$. 

There are two possible approaches to this task. The first is to alternate between online parameter estimation and optimal sensor placement, periodically updating the locations of the measurement sensors on the basis of the current parameter estimates. The second is to jointly perform online parameter estimation and optimal sensor placement, simultaneously and recursively updating the parameter estimates and the locations of the measurement sensors. We strongly advocate the second approach, which is not only more numerically convenient, but can be implemented in a truly online fashion. Moreover, in the case of mobile sensors, this approach can provide real-time motion guidance.

On this basis, we propose the use of a continuous-time, two-timescale stochastic gradient descent algorithm (e.g., \cite{Borkar2008,Borkar1997,Sharrock2020a}), which combines the schemes in \eqref{eq_sde} and \eqref{sensor_alg}. 
In particular, suppose some initialisation at ${\theta}_0\in\Theta$, ${\boldsymbol{o}}_0\in\Omega^{n_{y}}$. Then, simultaneously, the parameter estimates and the sensor placements are generated according to
\begin{subequations}
\begin{alignat}{2}
\hspace{-1mm} \mathrm{d}{\theta}(t) \hspace{-.5mm}&=\hspace{-.5mm} \left\{ \hspace{-.5mm}\begin{array}{l} \hspace{-1mm}-\gamma_{\theta}(t) \big[\mathcal{C}(\theta,\boldsymbol{o})\hat{u}^{\theta}(\theta,\boldsymbol{o},t)\big]^T\mathcal{R}^{-1}(\boldsymbol{o})\big[\mathcal{C}(\theta,\boldsymbol{o})\hat{u}(\theta,\boldsymbol{o},t)\mathrm{d}t-\mathrm{d}y(t)\big] \big|_{\substack{\theta=\theta(t) \\ \boldsymbol{o}=\boldsymbol{o}(t)}} \\ \hspace{-1mm} 0 \end{array}\right. &&\begin{array}{ll}  \hspace{-4mm}, & \hspace{-2mm} {\theta}(t)\in\Theta, \\[3mm] \hspace{-4mm}, & \hspace{-2mm} {\theta}(t)\not\in\Theta,  \end{array} \hspace{-12mm} \label{RML_ROSP_1} \\
\hspace{-1mm} \mathrm{d}{\boldsymbol{o}}(t)\hspace{-.5mm}&=\hspace{-.5mm} \left\{\hspace{-.5mm} \begin{array}{l}  \hspace{-1mm}-\gamma_{\boldsymbol{o}}(t)\mathrm{Tr}^{\boldsymbol{o}}\big[\mathcal{M}(t)\Sigma(\theta,\boldsymbol{o},t)\big]^T\mathrm{d}t \big|_{\substack{\theta=\theta(t) \\ \boldsymbol{o}=\boldsymbol{o}(t)}}  \\ \hspace{-1mm} 0 \end{array}\right. &&\begin{array}{ll} \hspace{-4mm}, & \hspace{-3mm}  {\boldsymbol{o}}(t)\in\Omega^{n_{y}}, \\[3mm]  \hspace{-4mm}, & \hspace{-3mm}  {\boldsymbol{o}}(t)\not\in\Omega^{n_{y}}.  \end{array} \hspace{-12mm} \label{RML_ROSP_2}
 \vspace{-8mm}
\end{alignat}
\end{subequations}
where \color{black} $\gamma_{\theta},\gamma_{\boldsymbol{o}}:\mathbb{R}_{+}\rightarrow\mathbb{R}_{+}$ are learning rates \color{black} which satisfy either $\smash{\lim_{t\rightarrow\infty}{\gamma_{\theta}(t)}/{\gamma_{\boldsymbol{o}}(t)}=0}$ or $\smash{\lim_{t\rightarrow\infty}{\gamma_{\boldsymbol{o}}(t)}/{\gamma_{\theta}(t)}=0}$. 
The choice between these two conditions determines which of the algorithm iterates moves on a slower timescale, \color{black} which in turn determines whether parameter estimation or optimal sensor placement is the primary objective.  
For example, \color{black} the first implies that the parameter estimates move on a slower timescale than the sensor placements, and is generally preferred if parameter estimation is the primary objective. 

In practice, we cannot implement this algorithm directly, as it depends on the infinite-dimensional solutions of the Kalman-Bucy filtering equations. We are thus required to use a Galerkin discretisation, and project onto a finite-dimensional Hilbert space (e.g., \cite{Germani1988,Rosen1991}). 
Under certain, verifiable conditions, the approximate, finite-dimensional  solutions of the Kalman-Bucy filtering equations, and the algebraic Ricatti equation, converge to the true, infinite-dimensional solutions as the order of the projection 
is increased (e.g., \cite{Banks1984,Burns2015,DeSantis1993,Germani1988,Wu2016,Zhang2018}). 
It is thus reasonable to expect that, under similar conditions, the finite-dimensional approximations of the asymptotic log-likelihood and the asymptotic objective function, $\smash{\tilde{\mathcal{L}}_n(\theta,\boldsymbol{o})}$ and $\smash{\tilde{\mathcal{J}}_n(\theta,\boldsymbol{o})}$, will converge to their true, infinite-dimensional counterparts, as will the corresponding approximations of the MLE and the optimal sensor placement.

In fact, rigorous convergence results of this type have already been obtained for the sensor placement objective function and the optimal sensor placement. In particular, under precisely the conditions required for convergence of the finite-dimensional filter, the finite-dimensional approximation of the sensor placement objective function and the optimal sensor placement do indeed converge to their true values  (e.g., \cite{Burns2015,Wu2016,Zhang2018}). 
\color{black} While similar results do not currently exist for the log-likelihood and the MLE (see \cite{Cialenco2018} for some relevant results in the fully observed case), it is expected that similar arguments could be applied in this setting.\color{black}

In this context, we have strong justification for implementing a finite-dimensional version of Algorithm \eqref{RML_ROSP_1} - \eqref{RML_ROSP_2}, in which the filter and filter derivatives are replaced by their finite-dimensional approximations. The resulting algorithm is a particular case of the joint online parameter estimation and optimal sensor placement algorithm analysed in \cite[Proposition 3.1]{Sharrock2020a}. Thus, under suitable conditions on the latent state, the optimal filter, and the filter derivatives, the parameter estimates and the optimal sensor placements generated by this algorithm are guaranteed to converge to the stationary points of the (finite-dimensional approximations of the) asymptotic log-likelihood and the asymptotic sensor placement objective function, respectively. That is, 
\begin{equation}
\lim_{t\rightarrow\infty}\nabla_{\theta}\tilde{\mathcal{L}}_n(\theta(t),\boldsymbol{o}(t)) = \lim_{t\rightarrow\infty}\nabla_{\boldsymbol{o}}\tilde{\mathcal{J}}_n(\theta(t),\boldsymbol{o}(t)) = 0.
\end{equation}
We state these conditions in full in Appendix \ref{appendixA}, and provide sufficient conditions in the linear Gaussian case. 

\newpage
\section{Numerical Results}
\label{sec:numerics}

In this section, we provide numerical examples illustrating the performance of the joint online parameter estimation and optimal sensor placement algorithm. \color{black} The \texttt{R} code is available at https://github.com/louissharrock/RML-ROSP. All simulations are performed on a 2015 MacBook Pro with 2.7 GHz Intel Core i5 processor and 8GB RAM. \color{black}

\subsection{Numerical Considerations}


For numerical purposes, \color{black} we will \color{black} project the infinite-dimensional solution of the signal equation onto a finite dimensional Hilbert space. \color{black} In particular, we will consider the finite dimensional subspace $\mathcal{H}_n\subset\mathcal{H}$ spanned by the truncated set of Fourier basis functions $\{\phi_{\boldsymbol{k}}\}_{\boldsymbol{k}\in\Lambda_{n}}$, where $\Lambda_{n}\subset \mathbb{Z}^2$ is the set of wave-numbers
\begin{equation}
\Lambda_{n}=\left\{\boldsymbol{k}\in\mathbb{Z}^2: -\left(\frac{n}{2}-1\right)\leq k_1,k_2\leq \frac{n}{2}\right\}~,~~~n\in2\mathbb{N}~,~~~|\Lambda_n| = n^2. \label{index_set}
\end{equation}
\color{black} We should emphasise that this choice of basis is not unique. Indeed, in principle, one could consider any finite dimensional basis (e.g., Chebyshev polynomials, finite-elements, etc.), provided that the resulting projection converged in an appropriate sense as its dimension increased (see, e.g., Theorem 4.2 in \cite{Zhang2018}). Indeed, other choices of the finite dimensional basis, such as the piecewise linear basis functions in \cite{Lindgren2011}, may be more appropriate in the case of non-periodic boundary conditions or other more complex geometries.  

For $n\geq 1$, let $\Pi_n:\mathcal{H}\rightarrow \mathcal{H}_n$ denote the orthogonal projection onto this space, defined in the usual fashion. The Galerkin projection of $u(t)$ is then given by $u_n(t) \color{black} = u_n(\cdot,t)=\{u_n(\boldsymbol{x},t):\boldsymbol{x}\in\Pi\}\in\mathcal{H}_n$, where
\begin{equation}
\color{black} u_n(\boldsymbol{x},t) \color{black} = \Pi_n \color{black} u(\boldsymbol{x},t) \color{black} = \sum_{\textbf{k}\in \Lambda_n} \alpha_{\boldsymbol{k}}(t) \color{black} \phi_{\boldsymbol{k}}(\boldsymbol{x}) \color{black} ~,~~~\alpha_{\boldsymbol{k}}(t) = \langle u(t),\phi_{\boldsymbol{k}} \rangle = \int_{\Pi} \color{black} u(\boldsymbol{x},t) \color{black} \overline{\phi_{\boldsymbol{k}}}(\boldsymbol{x})\mathrm{d}\boldsymbol{x}, \vspace{-1mm}
\end{equation}
and where the vector of Fourier coefficients $\{\alpha_{\boldsymbol{k}}(t)\}_{\boldsymbol{k}\in\Lambda_n}$ now obey
the finite dimensional SDE 
\begin{equation}
\mathrm{d} \alpha_{\boldsymbol{k}}(t) = 
\sum_{\boldsymbol{j}\in\Lambda_n}  \lambda_{\boldsymbol{j},\boldsymbol{k}}(\theta) \alpha_{\boldsymbol{j}}(t) \mathrm{d}t+\sum_{\boldsymbol{j}\in\Lambda_n}  \xi_{\boldsymbol{j},\boldsymbol{k}}\eta_{\boldsymbol{j}}(\theta)\mathrm{d}z_{\boldsymbol{j}}(t)~,~~~\boldsymbol{k}\in\Lambda_n, \vspace{-1mm} \label{finite_dim_fourier}
\end{equation}
\color{black}
\color{black} with $\lambda_{\boldsymbol{j},\boldsymbol{k}}(\theta)$ and $\xi_{\boldsymbol{j},\boldsymbol{k}}(\theta)$ defined as previously. \color{black} This high dimensional SDE will provide an approximation for the original, infinite dimensional SPDE. \color{black} Indeed, as $n\rightarrow\infty$, one can show that the finite-dimensional approximation $u_n(t)$ does indeed converge in law to the true solution $u(t)$ (see, e.g., \cite{Sigrist2015}).
It is convenient to rewrite this equation, as well as the corresponding observation equation, in vector form. 
Let $\smash{{\alpha}_{n}(t)= \{\alpha_{\boldsymbol{k}}\}_{\boldsymbol{k}\in\Lambda_n}}$ 
denote the $n^2$-dimensional vector of Fourier coefficients. We then have
\color{black}
\begin{alignat}{2}
\mathrm{d}{\alpha}_n(t) &= A_n(\theta) {\alpha}_n(t)\mathrm{d}t + B_n\mathrm{d}{{v}}_{n,\theta}(t)~,~~~&&\alpha_n(0)=\alpha_{n,0}, \label{finite_dim_signal} \\
\mathrm{d}{y}_n(t) &= C_n(\theta,\boldsymbol{o}){\alpha}_n(t)\mathrm{d}t+\mathrm{d}{w}_{\boldsymbol{o}}(t)~,~~~&&~y_n(0)=0, \label{finite_dim_obs}
\end{alignat}
where $A_n(\theta)\in\mathbb{R}^{n^2\times n^2}$, $B_n\in\mathbb{R}^{n^2\times n^2}$, and $C_n(\theta,\boldsymbol{o}) = \mathcal{C}(\theta,\boldsymbol{o})|_{\mathbb{R}^{n^2}}\in\mathbb{R}^{n_y\times n^2}$ are the matrices 
\begin{equation}
[A_n(\theta)]_{j,k} = \lambda_{\boldsymbol{j},\boldsymbol{k}}(\theta)~~,~~[B_n]_{j,k} = \xi_{\boldsymbol{j},\boldsymbol{k}}~~,~~[C_n(\theta,\boldsymbol{o})]_{j,k} = \mathcal{C}_j(\theta,\boldsymbol{o})\phi_{\boldsymbol{k}}, \label{matrices}
\end{equation}
and where $v_{n,\theta}(t)$ is the $\mathbb{R}^{n^2}$-valued Wiener process with incremental covariance matrix \linebreak $\smash{Q_n(\theta) = \mathrm{diag}[ \eta_{\boldsymbol{k}_1}^2(\theta),\dots,\eta_{\boldsymbol{k}_{n^2}}^2(\theta)]}$.
\color{black} In our numerical simulations, will typically set $n=50$, so that the simulated observations correspond to noisy realisations of the projection of the true infinite dimensional signal onto an $n^2=2500$ dimensional subspace.

Meanwhile, will apply the finite-dimensional Kalman-Bucy filter and tangent filter, and implement the joint online parameter estimation and optimal sensor placement algorithm, using a reduced Fourier basis of $K\ll n^2$ basis functions. This is typical in similar applications (e.g., \cite{Cressie2011a,Sigrist2015}). In particular, following \cite{Sigrist2015}, we used a reduced Fourier basis given by $\{\phi_{\boldsymbol{k}}\}_{\boldsymbol{k}\in\Gamma_{m,n}}$, where $\Gamma_{m,n}\subseteq \Lambda_n \subset \mathbb{Z}^2$ is the following set of wave-numbers 
\begin{equation}
\Gamma_{m,n}=  \{\boldsymbol{k}\in\mathbb{Z}^2: k_1^2+k_2^2\leq m\} \cap \Lambda_n ~,~~~m\in\mathbb{N}_{0}~,~~~K = |\Gamma_{m,n}|. 
\end{equation}
 In our simulations, we will generally set $m=5$, which yields a Fourier truncation with $K=21$ basis functions. 
Numerical results indicate that, for our purposes, this choice represents a reasonable trade-off between accuracy - both of the optimal sensor placement (see Table \ref{tab:K_comparison}) and the optimal state estimate (see Figure \ref{fig_13a})) - and computational cost. This choice is also comparable with other related works (e.g., \cite{Burns2015,Sigrist2015,Zhang2018}). 
\color{black} 
\color{black} 
Regarding the time discretisation, we use 
an exponential Euler scheme for the finite-dimensional approximation of the partially observed diffusion process \eqref{finite_dim_signal} - \eqref{finite_dim_obs}, and implement the discrete-time analogue of the stochastic gradient descent algorithm \eqref{RML_ROSP_1} - \eqref{RML_ROSP_2}.   

\begin{table}[!h]
\color{black}
\caption{\color{black} The mean squared error (MSE) of the approximate optimal sensor placement, and the CPU time per iteration, for different values of the number of basis functions $K$, in two cases of interest. In {Case I}, there are 25 sensors, all of which are movable. In {Case II}, there are 40 sensors, 5 of which are movable. In both cases, the sensors are initially uniformly placed at random over $\Pi = [0,1]^2$, and the algorithm is run for $T=1\times 10^4$ iterations. While increasing the number of basis functions beyond $K=21$ can clearly decrease the error, this choice is adequate to obtain a relatively accurate approximation at a relatively low computational cost, particularly when there are relatively few movable sensors (Case II). \color{black}}
\centering
{\footnotesize
\begin{tabular}{|l|llllllll|}
\hline
                             & $\textbf{K}$                           & 5     & 13   & 21   & 37    & 57    & 81    & 101   \\
\hline
\multirow{2}{*}{\textbf{Case I}}  & MSE ($\times 10^{-2}$) & 5.75  & 4.28 & 2.23 & 2.67  & 1.43  & 0.28  & 0.13  \\
                             & CPU Time per Iteration (s)    
                             & 0.03 
                             & 0.05 
                             & 0.06 
                             & 0.11 
                             & 0.23 
                             & 0.54 
                             & 0.87 \\ 
                             \hline
\multirow{2}{*}{\textbf{Case II}} & MSE ($\times 10^{-2}$)                 & 12.49 & 9.07 & 0.32 & 0.16  & 0.05  & 0.02  & 0.01  \\
                             & CPU Time per Iteration (s)    
                             & 0.04 
                             & 0.05 
                             & 0.07 
                             & 0.12 
                             & 0.24 
                             & 0.52 
                             & 0.83 \\
 \hline
\end{tabular}
\label{tab:K_comparison}
}
\end{table}

\begin{figure}[!h]
\color{black}
\vspace{-5mm}
\centering
  \subfloat[Number of basis functions.]{\label{fig_13a}\includegraphics[width=0.46\textwidth]{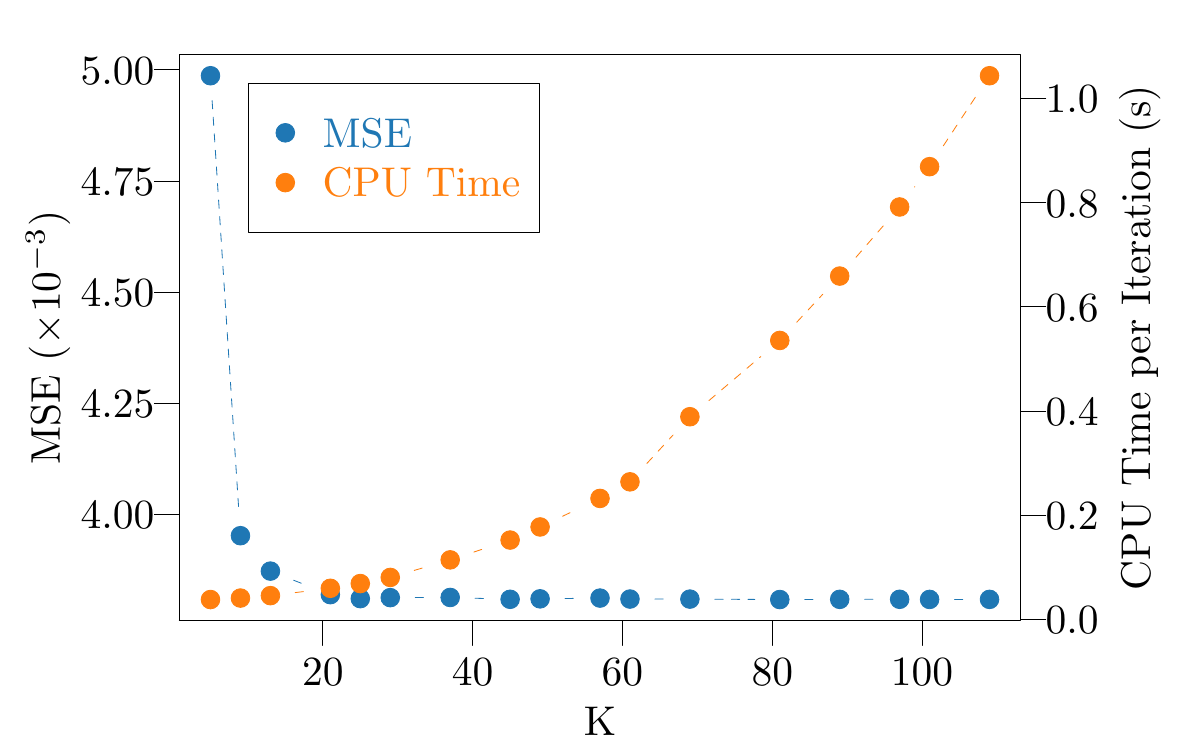}}
  \hspace{1mm}
  \subfloat[Number of movable sensors.]{\label{fig_13b}\includegraphics[width=0.46\textwidth]{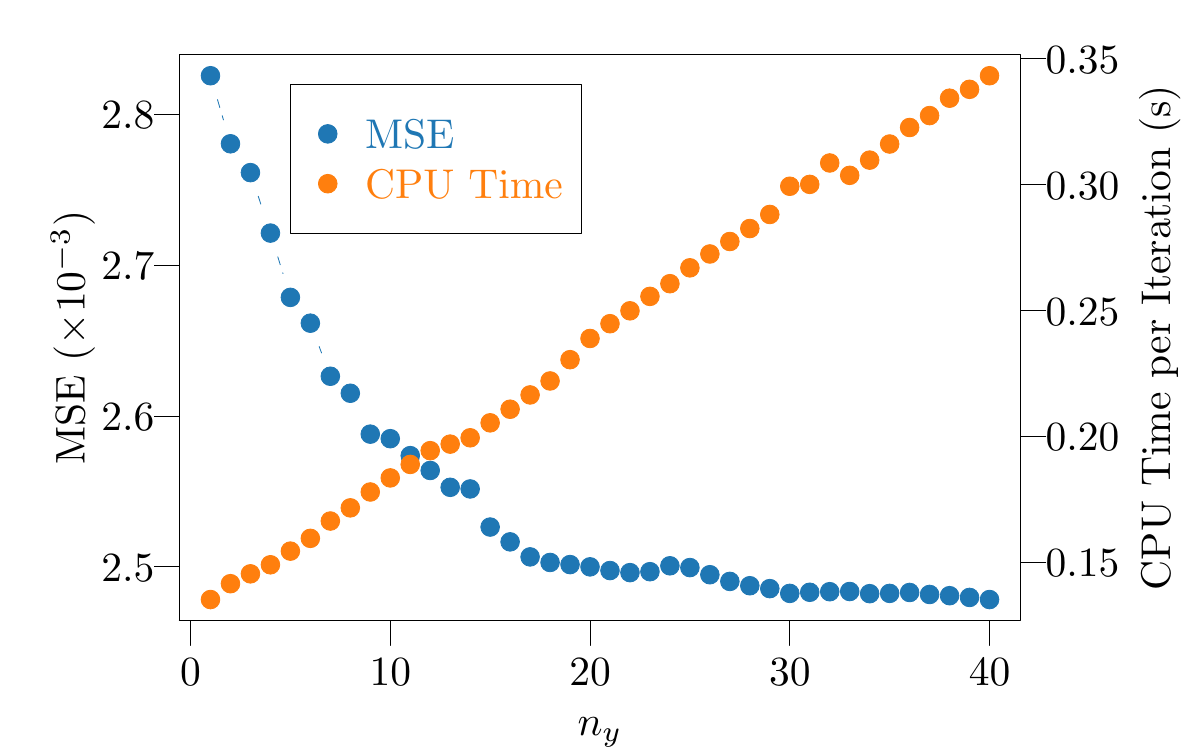}}
\caption{The MSE of the optimal state estimate, and the CPU time (per iteration), for different choices of (a) the number of basis functions $K$ and (b) the number of movable sensors. In (a) we use $25$ sensors, all of which are movable. In (b) we use 40 sensors, varying the number of movable sensors, and use $K=45$ basis functions. In both cases, the sensors are initially uniformly placed at random over $\Pi = [0,1]^2$, and the algorithm is run for $T=1\times 10^4$ iterations.}
\label{fig13}
\vspace{-15mm}
\end{figure}
\color{black}

\color{black}

\subsection{Numerical Experiments}

\subsubsection{Simulation I}
We first investigate the convergence of the parameter estimates and the optimal sensor placements under conditions which guarantee convergence to the stationary points of the asymptotic log-likelihood and the asymptotic sensor placement objective function, respectively (see Appendix \ref{appendixA}). We assume that the true model parameters and the initial parameter estimates are given respectively by
\begin{align}
\theta^{*} &= (\rho_0 = 0.50,\sigma^2 = 0.2,\zeta=0.5,\rho_1=0.1,\gamma=2.0,\alpha=\tfrac{\pi}{4},\mu_x=0.30,\mu_y=-0.3,\tau^2 = 0.01), \label{true_model_param} \nonumber \\[1mm]
\theta_0 &= ({\rho}_{0} = 0.25,{\sigma}^2 = 0.8,{\zeta}=0.1,{\rho}_{1}=0.2,{\gamma}=1.2,{\alpha}=\tfrac{\pi}{3},{\mu}_{x}=0.1,{\mu}_{y}=-0.15,{\tau}^2 = 0.10).  \nonumber
\end{align}
We also assume that we have $n_{y}=8$ sensors in $\Pi=[0,1]^2$. We suppose that the sensors are independent, have zero bias, and generate noisy measurements with variance $\tau^2$. Thus, in the observation equation, we have $\beta = (0,\dots,0)^T$ and $\mathcal{R} = \diag(\tau^2)$.
In this test simulation, in order to verify the convergence of our algorithm, we suppose that our objective is to obtain the optimal sensor placement with respect to the state estimate at a set of \color{black} known `{target}' \color{black} locations. \color{black} This is achieved by choosing an operator $\mathcal{M}$ in the sensor placement objective function $\tilde{\mathcal{J}}(\theta,\boldsymbol{o})$, c.f. \eqref{obj_func}, which places greater emphasis on minimising the uncertainty in the state estimate at the target locations. We provide an explicit definition of this operator in Appendix \ref{app:weighting}. \color{black} In particular, we assume that the target sensor locations and the initial sensor locations are given, respectively, by 
\begin{align}
{{\boldsymbol{o}}}^{*} &= \left\{\begin{pmatrix} 0.00 \\ 0.59\end{pmatrix},\begin{pmatrix} 0.50 \\ 0.66 \end{pmatrix},\begin{pmatrix} 0.33 \\ 0.33 \end{pmatrix}, \begin{pmatrix} 0.75 \\ 0.50 \end{pmatrix}, \begin{pmatrix} 0.08 \\ 0.08 \end{pmatrix}, \begin{pmatrix} 0.58\\ 0.83 \end{pmatrix}, \begin{pmatrix} 0.83 \\ 0.92 \end{pmatrix}, \begin{pmatrix} 0.25 \\ 0.83 \end{pmatrix}\right\}, \label{targets} \\[1mm]
{\boldsymbol{o}}_0 &= \left\{\begin{pmatrix} 0.84 \\ 0.65 \end{pmatrix},\begin{pmatrix} 0.34 \\ 0.50 \end{pmatrix},\begin{pmatrix} 0.43 \\ 0.31 \end{pmatrix}, \begin{pmatrix} 0.60 \\ 0.34 \end{pmatrix}, \begin{pmatrix} 0.27 \\ 0.26 \end{pmatrix}, \begin{pmatrix} 0.51 \\ 0.18 \end{pmatrix}, \begin{pmatrix} 0.08 \\ 0.23 \end{pmatrix}, \begin{pmatrix} 0.25 \\ 0.08 \end{pmatrix}\right\}. 
\end{align}
It remains to specify the learning rates $\{\gamma^{i}_{\theta}(t)\}_{t\geq 0}^{i=1,\dots,9}$ and $\{\gamma^{j}_{\boldsymbol{o}}(t)\}^{j=1,\dots,8}_{t\geq 0}$, where the indices $i,j$ now make explicit the fact that the step sizes are permitted to vary between parameters, and between sensors. In this simulation, we assume that our primary objective is to estimate the true model parameters, and our secondary objective is to optimally place the measurement sensors. We thus set $\smash{\gamma_{\theta}^{i}(t) = \gamma^{i}_{\theta,0}t^{-\varepsilon^i_{\theta}}}$ and $\smash{\gamma_{\boldsymbol{o}}^{j}(t) = \gamma^{j}_{\theta,0}t^{-\varepsilon^j_{\boldsymbol{o}}}}$, where $\gamma^{i}_{\theta,0},\gamma^{j}_{\boldsymbol{o},0}>0$ and $\smash{0.5<\varepsilon_{\boldsymbol{o}}^j<\varepsilon_{\theta}^{i}\leq1}$ for all $i=1,\dots,9$ and $j=1,\dots,8$, with the values of $\smash{\gamma^{i}_{\theta,0}}$, $\smash{\varepsilon_{\theta}^i}$, $\smash{\gamma^{j}_{\boldsymbol{o},0}}$, and $\smash{\varepsilon_{\boldsymbol{o}}^{j}}$ tuned individually. \color{black} In our numerics, the specific values of the learning rates are chosen on the basis of initial experiments.  In principle, however, one can use any one of a number of adaptive learning rate methods to automate this choice, including backtracking line search, Adagrad \cite{Duchi2011}, Adadelta \cite{Zeiler2012}, Adam \cite{Kingma2015}, AMSgrad \cite{Reddi2018}, and others.  \color{black} This choice of learning rate satisfies all of the conditions of \cite[Proposition 3.1]{Sharrock2020a}. In particular, it guarantees that
\begin{equation}
\lim_{t\rightarrow\infty} \frac{\gamma_{\theta}^i(t)}{\gamma_{\boldsymbol{o}}^j(t)} = 0~~~\forall i=1,\dots,9~,~j=1,\dots,8. \label{learning_rate_1}
\end{equation}
This implies that the parameter estimates $\{\theta(t)\}_{t\geq 0}$ move on a slower timescale than the sensor placements $\{\boldsymbol{o}(t)\}_{t\geq 0}$. Thus, the sensor placements see the parameter estimates as quasi-static, while the parameter estimates see the sensor placements as almost equilibrated. In practice, this means that $\boldsymbol{o}(t)$ will asymptotically track 
the sensor placements which are optimal with respect to the \emph{current} parameter estimates. This is particularly advantageous when the optimal sensor placement depends significantly on the parameters (see Section \ref{sim2}). 

The performance of the two-timescale stochastic gradient descent algorithm is visualised in Figures \ref{fig1}, in which we plot the sequence of online parameter estimates and optimal sensor placements, Figure \ref{fig1_KF}, in which we plot a single component of the hidden state used to generate the observations, and the optimal state estimate, and in Figure \ref{fig1_MSE}, in which we plot the time evolution of the mean squared error (MSE) for the corresponding filter. As expected, all of the parameter estimates converge to within a small neighbourhood of their true values (Figure \ref{fig_1a}), and all of the sensors converge to one of the target locations (Figure \ref{fig_1b}). As a result, the performance of the filter is improved to near-optimal after approximately $T = 2 \times 10^{4}$ iterations (Figure \ref{fig1_MSE}). This number is largely determined by the initial magnitudes of the learning rates $\smash{\{\gamma^{i}_{\theta}(t)\}_{t\geq 0}^{i=1,\dots,9}}$ and $\smash{\{\gamma^{j}_{\boldsymbol{o}}(t)\}^{j=1,\dots,8}_{t\geq 0}}$. In particular, increasing one or more of these values will often decrease the time taken for the algorithm iterates to converge. 

\begin{figure}[!h]
\captionsetup[subfloat]{captionskip=2pt}
\centering
  \subfloat[Online parameter estimates.]{\label{fig_1a}\includegraphics[width=0.49\textwidth]{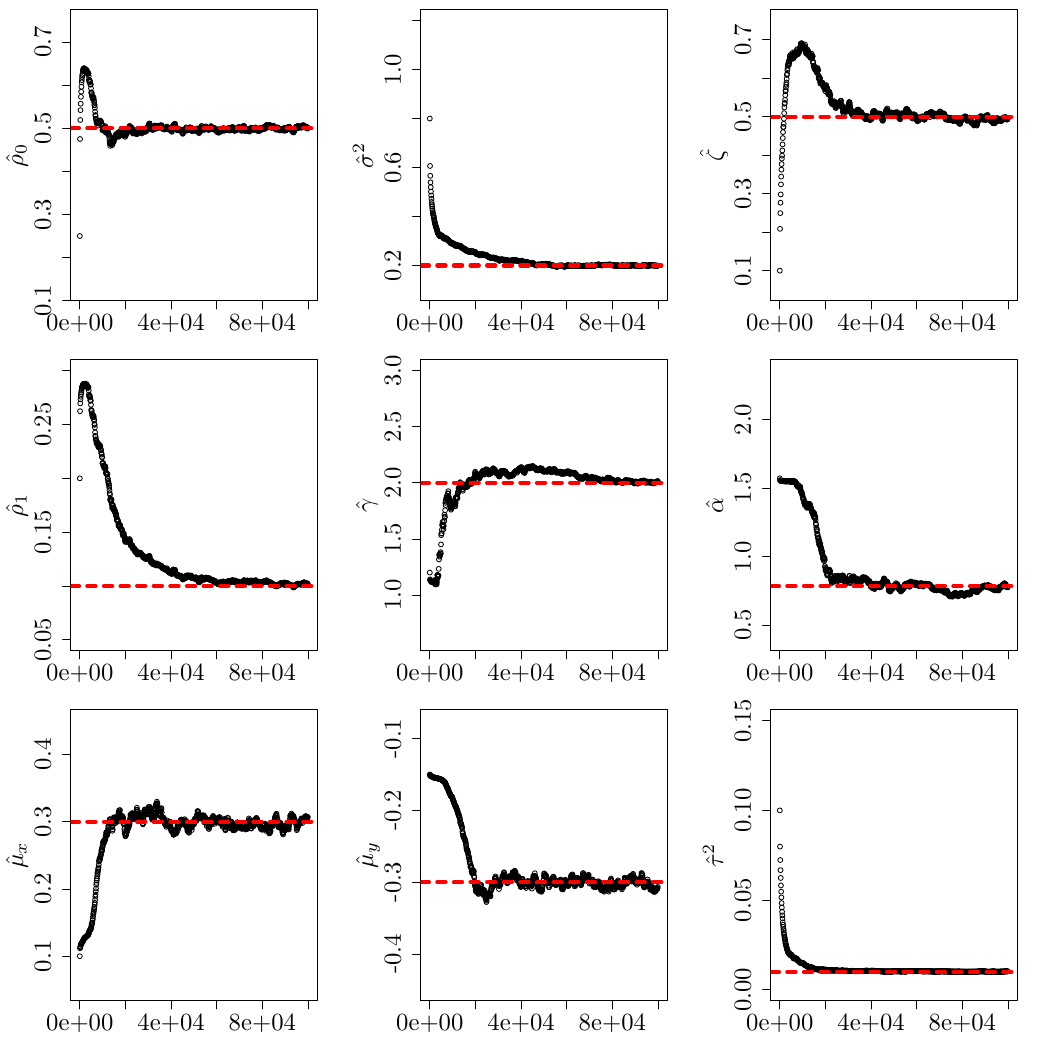}}
  \hspace{1mm}
  \subfloat[Optimal sensor placements.]{\label{fig_1b}\includegraphics[width=0.49\textwidth]{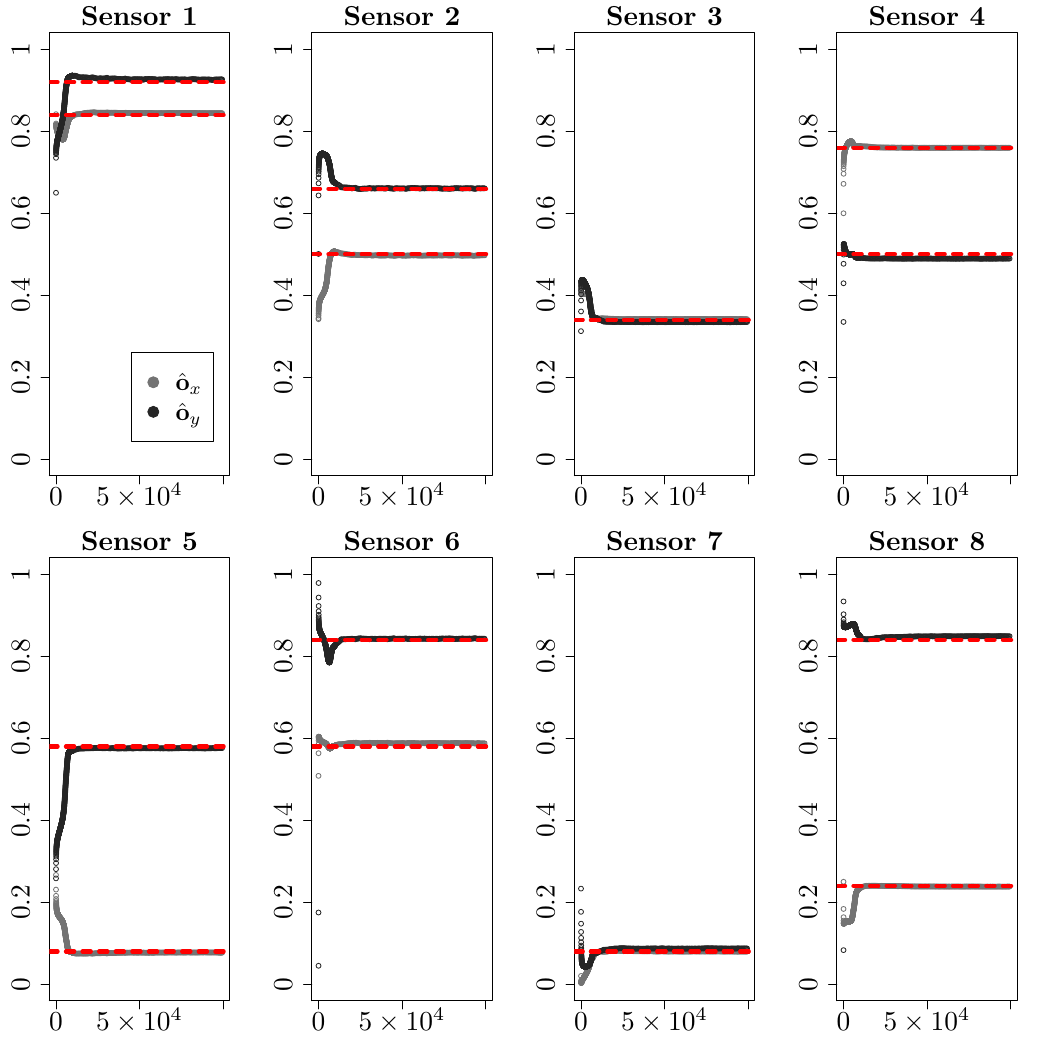}}
  \vspace{-2mm}
\caption{Simulation Ia. The online parameter estimates \& the optimal sensor placements (black); and the true parameters \& optimal sensor placements (red, dashed). In this simulation, the parameter estimates move on the slower timescale, and the sensor placements move on the faster timescale. \color{black}The total CPU time required for this simulation is 2368 seconds (0.02368 seconds per iteration).\color{black}}
\label{fig1}
\end{figure}

\begin{figure}[!h]
\captionsetup[subfloat]{captionskip=2pt}
\centering
  \subfloat[The entire learning period.]{\label{fig1_KF_a}\includegraphics[width=0.7\textwidth]{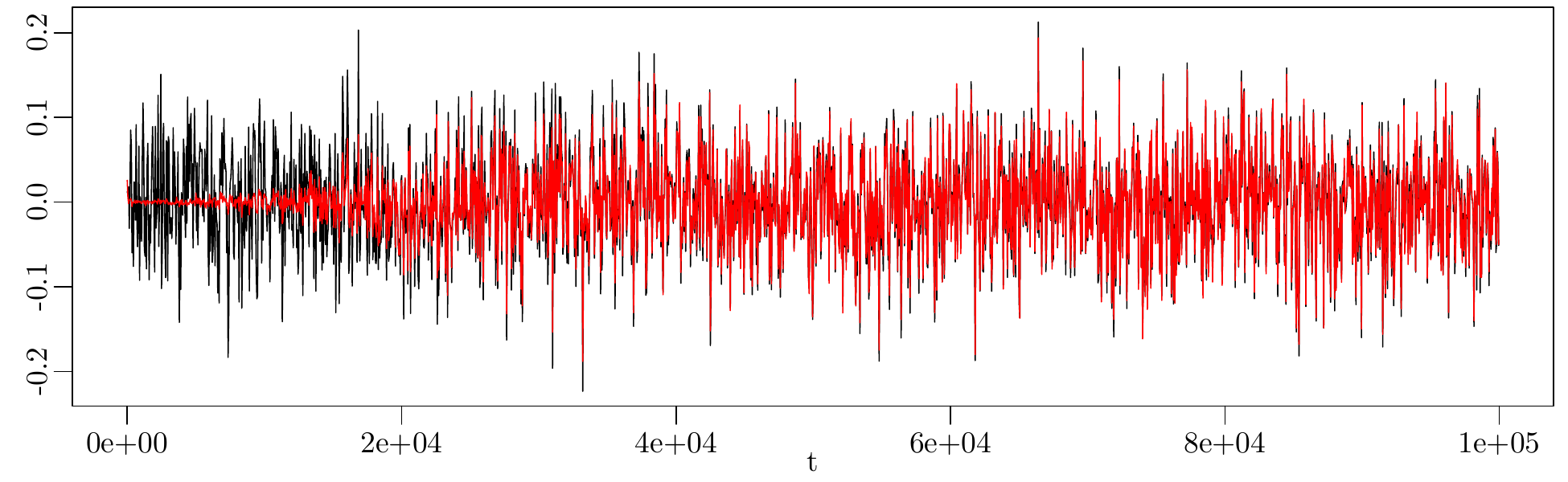}} \\[-3mm]
  \hspace{3mm}\subfloat[The first 200 iterations.]{\label{fig1_KF_b}\includegraphics[width=0.34\textwidth]{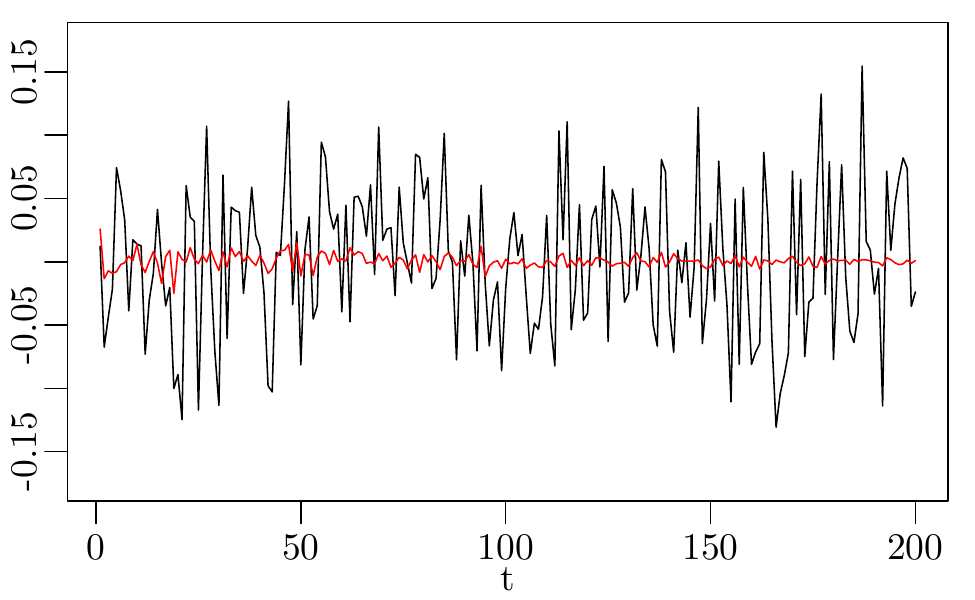}}
  \hspace{1mm}
  \subfloat[The final 200 iterations.]{\label{fig1_KF_c}\includegraphics[width=0.34\textwidth]{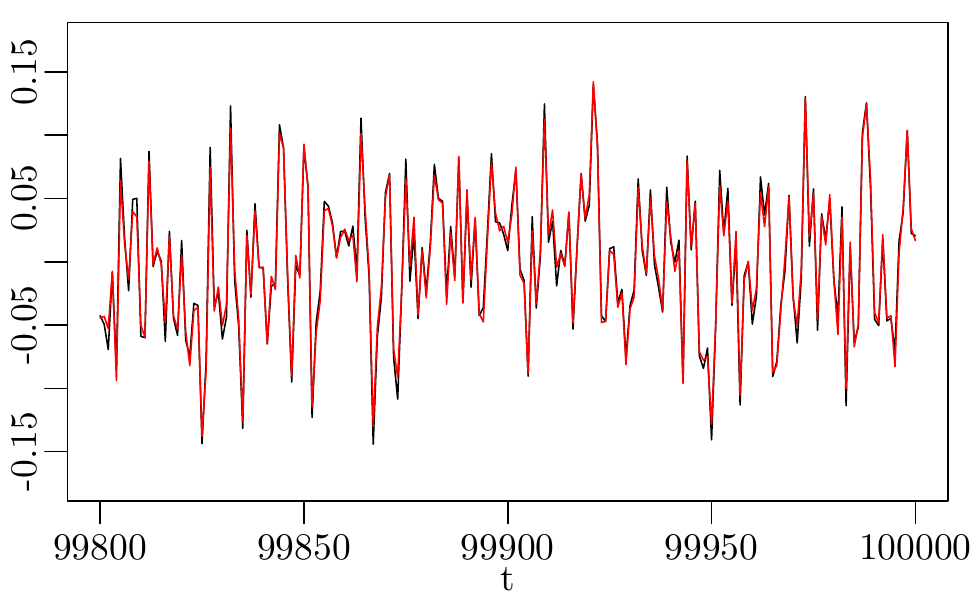}}
\caption{Simulation Ia. A component of the hidden state $\alpha_n(t)$ (black) and the state estimate $\hat{\alpha}_n(t)$ (red).}
\label{fig1_KF}
\end{figure}

\begin{figure}[!h]
\centering
  \includegraphics[width=.7\textwidth]{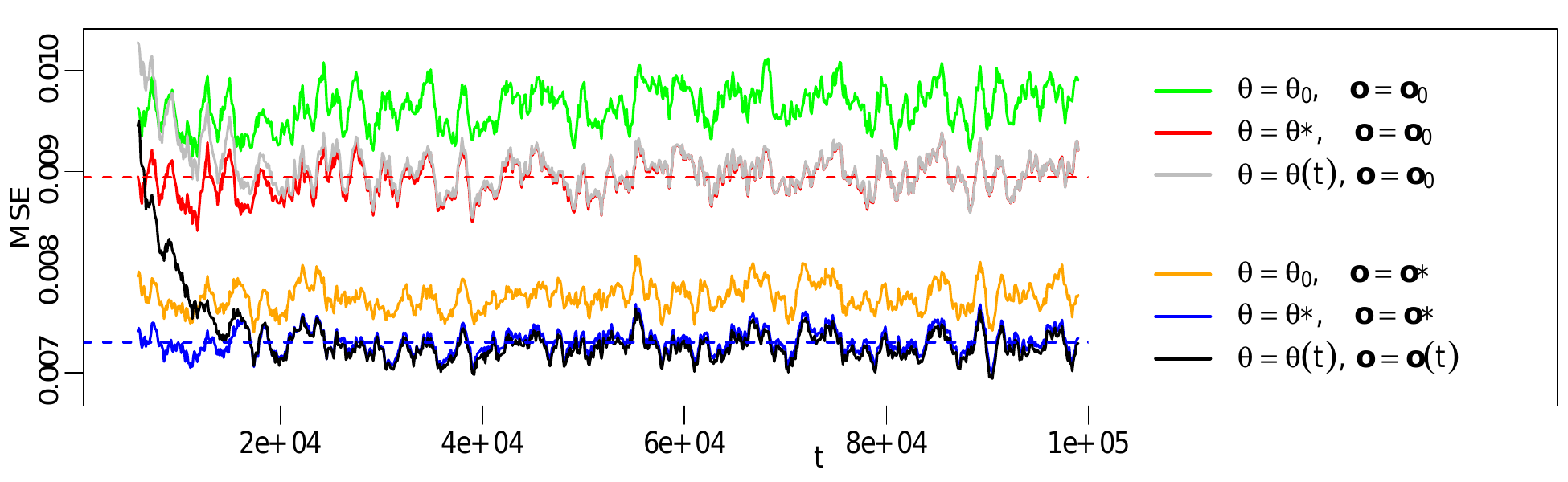}
  \vspace{-4mm}
\caption{Simulation Ia. The moving average of the MSE of the optimal state estimate under different learning scenarios (various colours). We also plot the average of the MSE for the true parameters and the initial sensor placement (red, dashed), and the average of the MSE for the true parameters and the optimal sensor placement (blue, dashed). The MSE is calculated at $n^2 = 50^2$ uniformly spaced grid points on $\Pi = [0,1]^2$.}
\label{fig1_MSE}
\end{figure}

It is also possible to apply our algorithm when the primary objective is to obtain the optimal sensor placement, and the secondary objective is to estimate the true model parameters. That is, the order of the two optimisation problems is reversed. In particular, this is achieved 
by choosing learning rates which no longer satisfy \eqref{learning_rate_1}, but instead satisfy
\begin{equation}
\lim_{t\rightarrow\infty} \frac{\gamma_{\boldsymbol{o}}^i(t)}{\gamma_{\theta}^j(t)} = 0~~~\forall i=1,\dots,9~,~j=1,\dots,8. \label{learning_rate_2}
\end{equation}
This implies, of course, that the sensor placements $\{\boldsymbol{o}(t)\}_{t\geq 0}$ now move on a slower timescale than the parameter estimates $\{\theta(t)\}_{t\geq 0}$. The performance of the two-timescale stochastic gradient descent algorithm in this scenario, with all other assumptions unchanged from the first simulation, is illustrated in Figure \ref{fig1_reverse}. Once more, we observe that all of the parameter estimates converge to within a small neighbourhood of their true values, and all of the sensors converge to one of the target locations. Unsurprisingly, given the alternative specification of the learning rates, the convergence of the parameter estimates is somewhat faster than before, while the convergence of the optimal sensor placements is somewhat slower. 

\begin{figure}[!h]
\centering
  \subfloat[Online parameter estimates.]{\label{fig_1a_reverse}\includegraphics[width=0.49\textwidth]{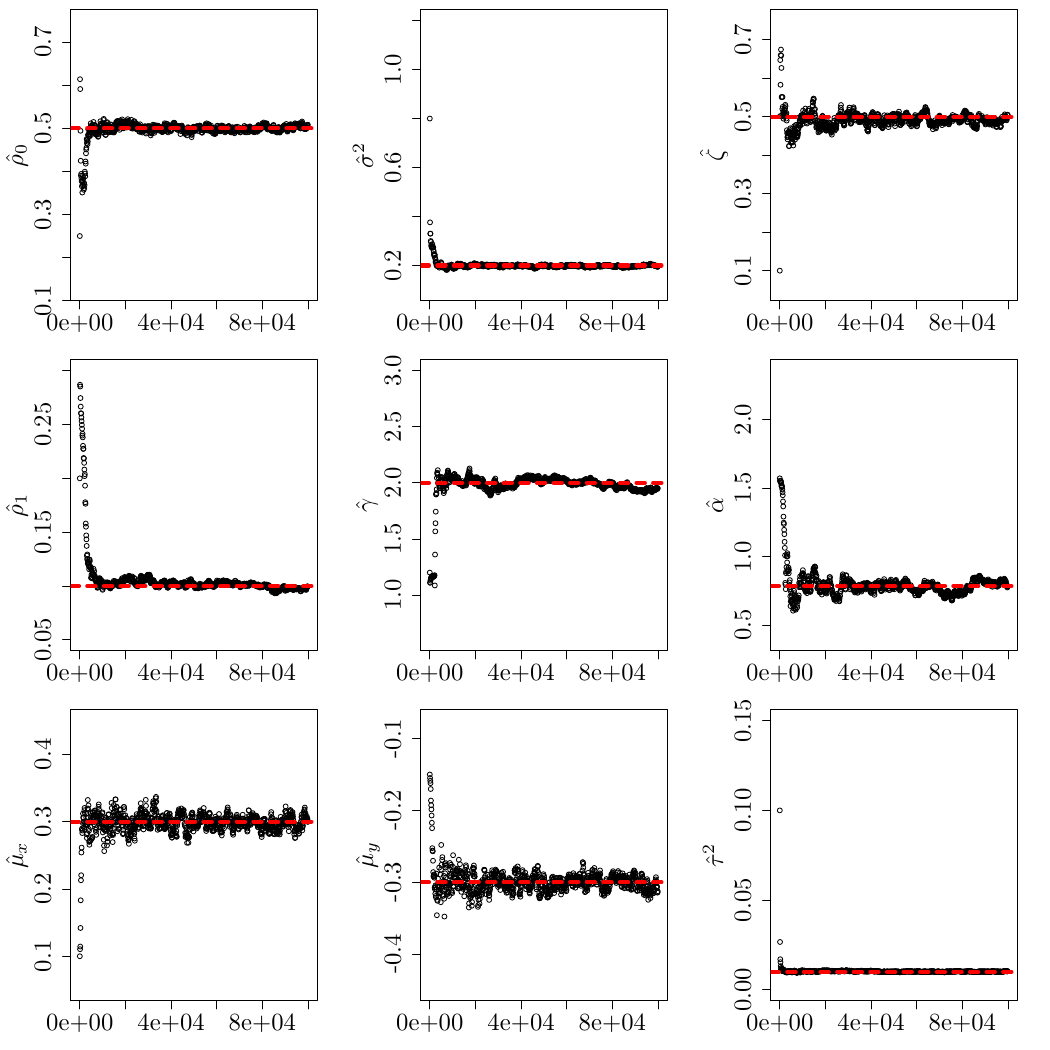}}
  \hspace{1mm}
  \subfloat[Optimal sensor placements.]{\label{fig_1b_reverse}\includegraphics[width=0.49\textwidth]{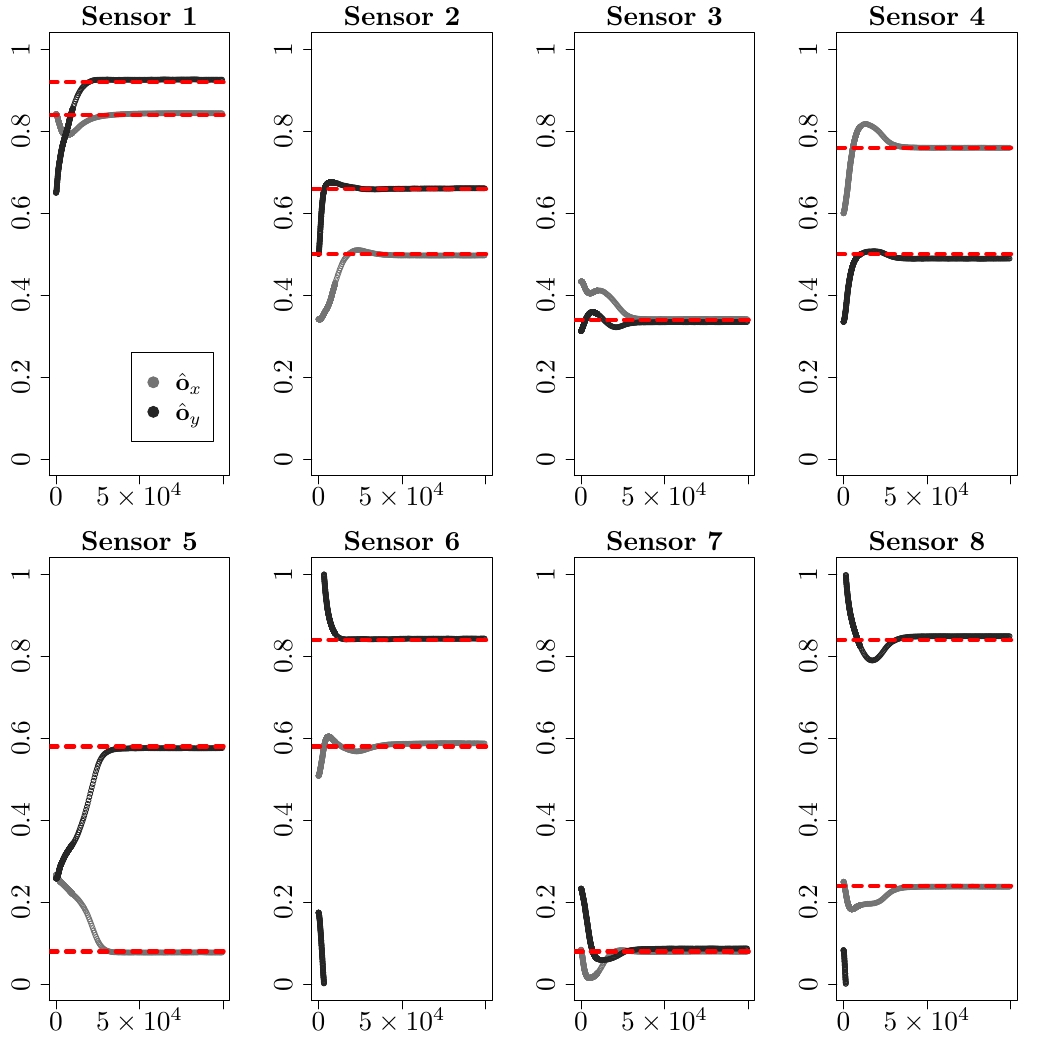}}
  \vspace{-2mm}
\caption{Simulation Ib. The online parameter estimates \& the optimal sensor placements (black); and the true parameters \& optimal sensor placements (red, dashed). In this simulation, the parameter estimates move on the faster timescale, and the sensor placements move on the slower timescale.}
\label{fig1_reverse}
\end{figure}

It is worth re-emphasising, at this stage, that the convergence of our algorithm does \emph{not} depend on whether the learning rates satisfy \eqref{learning_rate_1} or \eqref{learning_rate_2}. That is to say, \emph{a priori}, the algorithm has no preference over which of the parameter estimates or the optimal sensor placements moves on the faster time scale, and which moves on the slower time scale. This is a clear advantage of the two-timescale approach.

\subsubsection{Simulation II} \label{sim2}
In our second numerical experiment, we investigate the performance of our algorithm in a scenario where the optimal sensor placement depends to a significant extent on the value of one of the model parameters. In this simulation, we will assume that the values of $\theta_{2:9} = (\sigma^2,\zeta,\rho_1,\gamma,\alpha,\mu_x,\mu_y,\tau^2)$ are known, and fixed equal to their true values, while the value of $\theta_1 = \rho_0$ is unknown. The true value and the initial value of the unknown parameter are given by ${\rho}_0^{*} = 0.3$ and $\rho_0 = 0.01$, respectively. 

We now assume that we have $n_y = 5$ sensors in $\Pi = [0,1]^2$, all of which are independent, have zero bias, and the same variance. The locations of the first 4 sensors are fixed, while the location of the final sensor is to be optimised. In contrast to the previous simulation, we now suppose our objective is to obtain the optimal sensor placement with respect to the state estimate over the entire spatial domain \color{black} (i.e., not weighted towards a set of target locations). This is achieved by setting the spatial weighting operator $\mathcal{M} = \mathds{1}$ (i.e., the identity operator) in the sensor placement objective function. \color{black} The locations of the fixed sensors, and the initial location of the sensor whose location is to be optimised, are shown in Figure \ref{fig_sim2_plot2}b.

It remains to specify the learning rates $\smash{\{\gamma_{\rho_0}(t)\}_{t\geq 0}}$ and $\smash{\{\gamma_{\boldsymbol{o}_5}(t)\}_{t\geq 0}}$. In this case, we set $\smash{\gamma_{\rho_0}(t)= 0.1t^{-0.55}}$ and $\smash{\gamma_{\boldsymbol{o}_5}(t)= 0.1t^{-0.51}}$, implying that the sensor placements move on a faster timescale than the parameter estimates. 
Thus, as outlined previously, $\boldsymbol{o}(t)$ should asymptotically track $\smash{\boldsymbol{o}^{*}(\rho_0(t))}$, the sensor placement which is optimal with respect to the current parameter estimate. This is clearly advantageous in the current scenario, in which the optimal sensor placement is known to depend on the unknown model parameter. This is clearly visualised in Figure \ref{fig_sim2}, which contains plots of the asymptotic sensor placement objective function, and the corresponding optimal sensor placement, for several different values of the unknown model parameter. For this configuration of fixed sensors, we observe that the optimal location of the additional sensor is to the south-east (or north-west) of centre for small $\rho_0$ (Figure \ref{fig_sim2a}), and converges to the centre as $\rho_0$ increases (Figures \ref{fig_sim2_b} - \ref{fig_sim2_d}).

The performance of the two-timescale gradient descent algorithm is illustrated in Figure \ref{fig_sim2_plot2}, in which we have plotted the sequence of online parameter estimates $\{\rho_0(t)\}_{t\geq 0}$ and optimal sensor placements $\{\boldsymbol{o}_5(t)\}_{t\geq 0}$. Unsurprisingly, the online parameter estimate, on the slow-timescale, is seen to converge to the true value of $\rho_0^{*} = 0.3$ over the course of the entire learning period.  Meanwhile, the optimal sensor placement, on the fast-timescale, begins by moving rapidly from its initial position to a location to the south-east of centre.
It then moves slowly towards the centre of the domain as the online parameter estimate of $\rho_0$ increases towards its true value. 
Thus, the optimal sensor placement does indeed track the local optima of the sensor placement objective function, while the online parameter estimate converges to its true value. 

\begin{figure}[!h]

\captionsetup[subfloat]{captionskip=6pt}
\centering
\subfloat[$\rho_0 = 0.03$]{\label{fig_sim2a}\includegraphics[width=.18\linewidth]{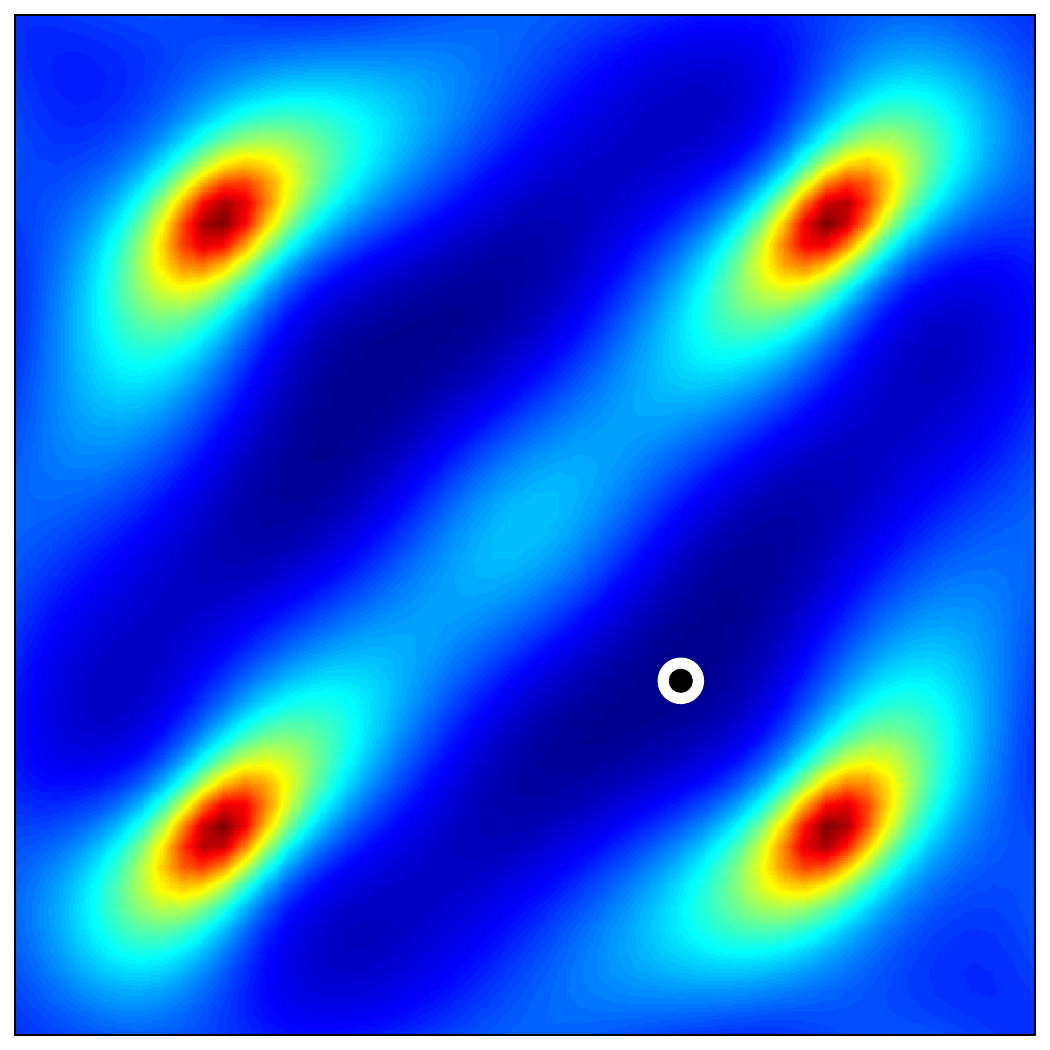}}
  \hspace{1.5mm}
 \subfloat[$\rho_0 = 0.10$]{\label{fig_sim2_b}\includegraphics[width=.18\linewidth]{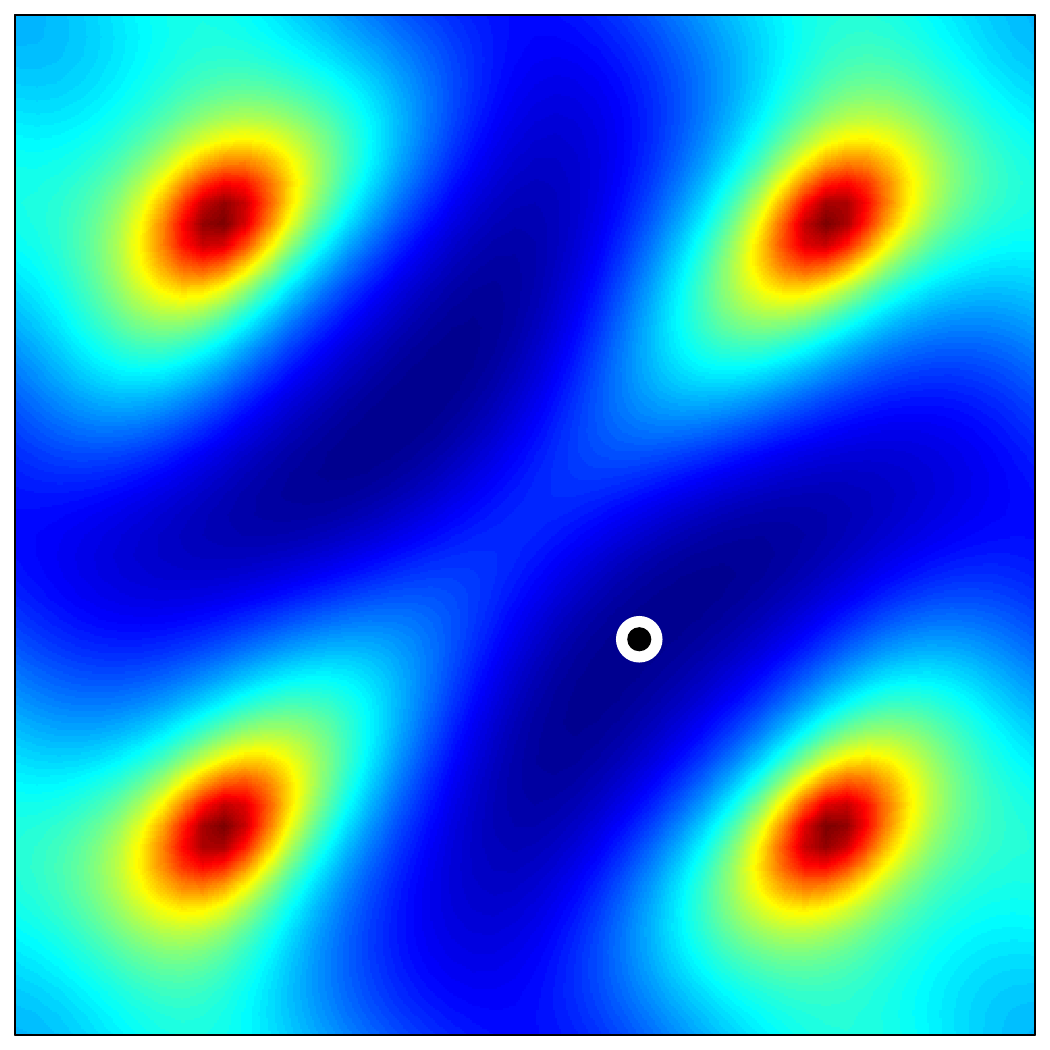}}
 \hspace{1.5mm}
  \subfloat[$\rho_0 = 0.15$ ]{\label{fig_sim2_c}\includegraphics[width=.18\linewidth]{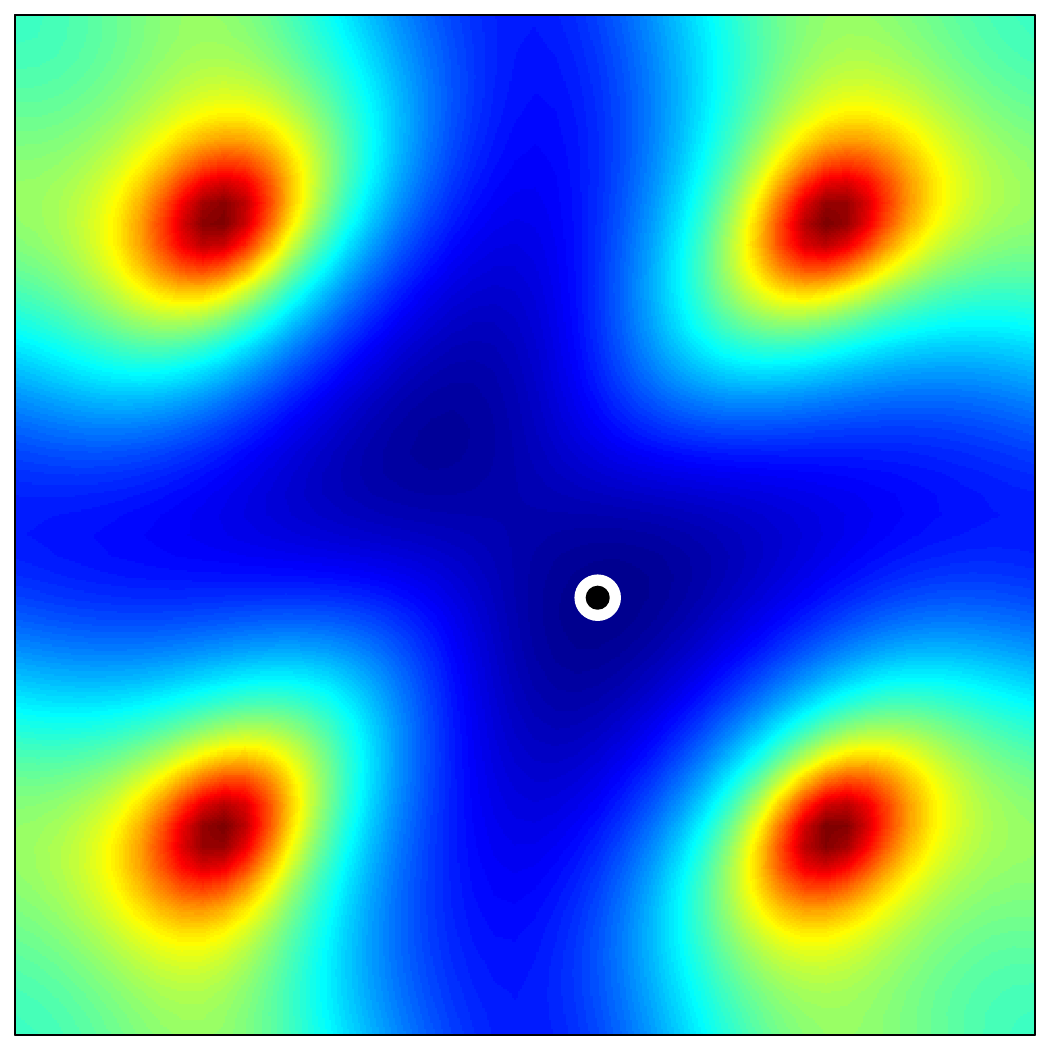}}
 \hspace{1.5mm}
  \subfloat[$\rho_0 = 0.20$]{\label{fig_sim2_d}\includegraphics[width=.18\linewidth]{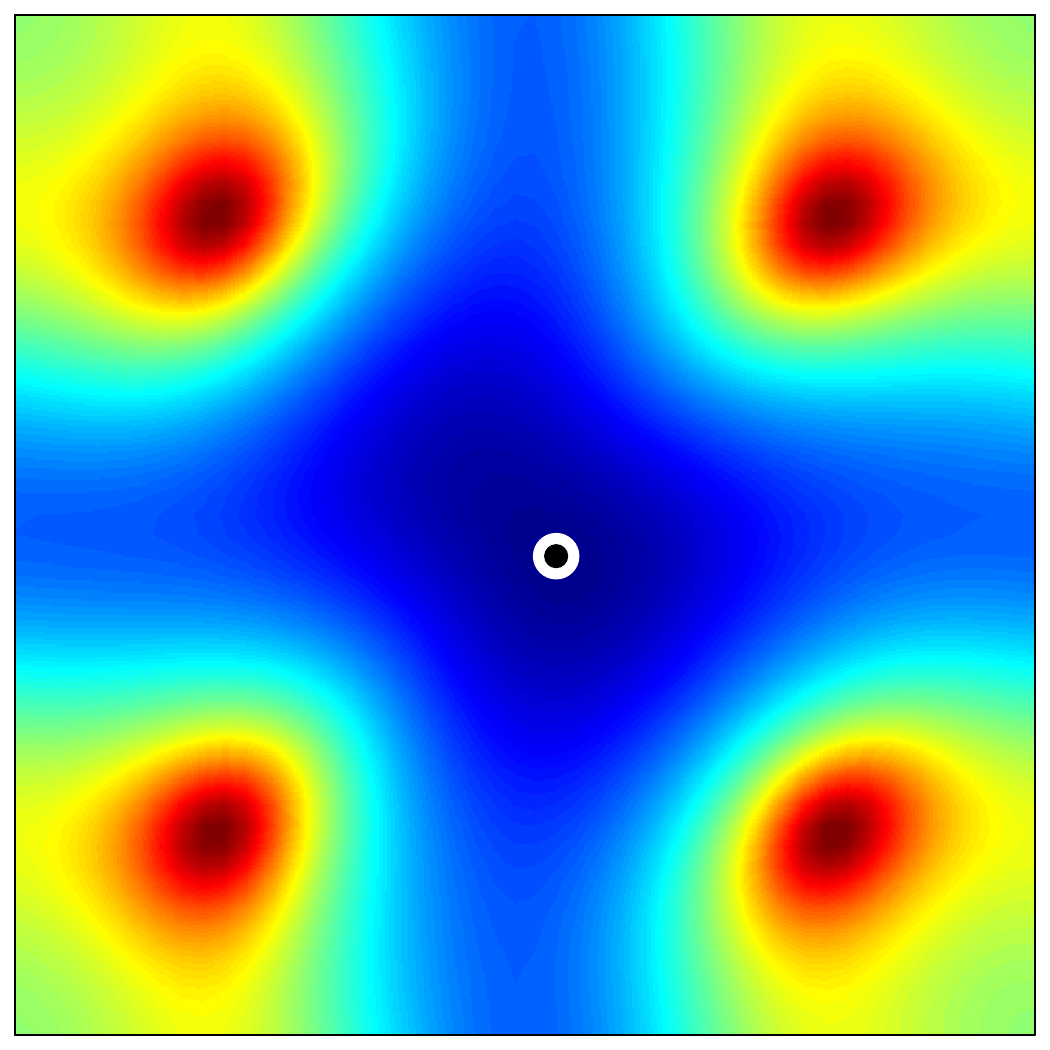}}
  \vspace{-2mm}
\caption{Simulation II. Heat maps of the sensor placement objective function, and the optimal sensor placement, for different values of $\rho_0$.}
\label{fig_sim2}
\vspace{-2mm}
\end{figure}

\begin{figure}[!h]
\captionsetup[subfloat]{captionskip=2pt}
\centering
  \subfloat[Online parameter estimates.\hspace{12mm} (b) Optimal sensor placements. ] {\label{fig_sim2_plot2_a}
  \hspace{-5mm}
  \includegraphics[width=0.86\textwidth]{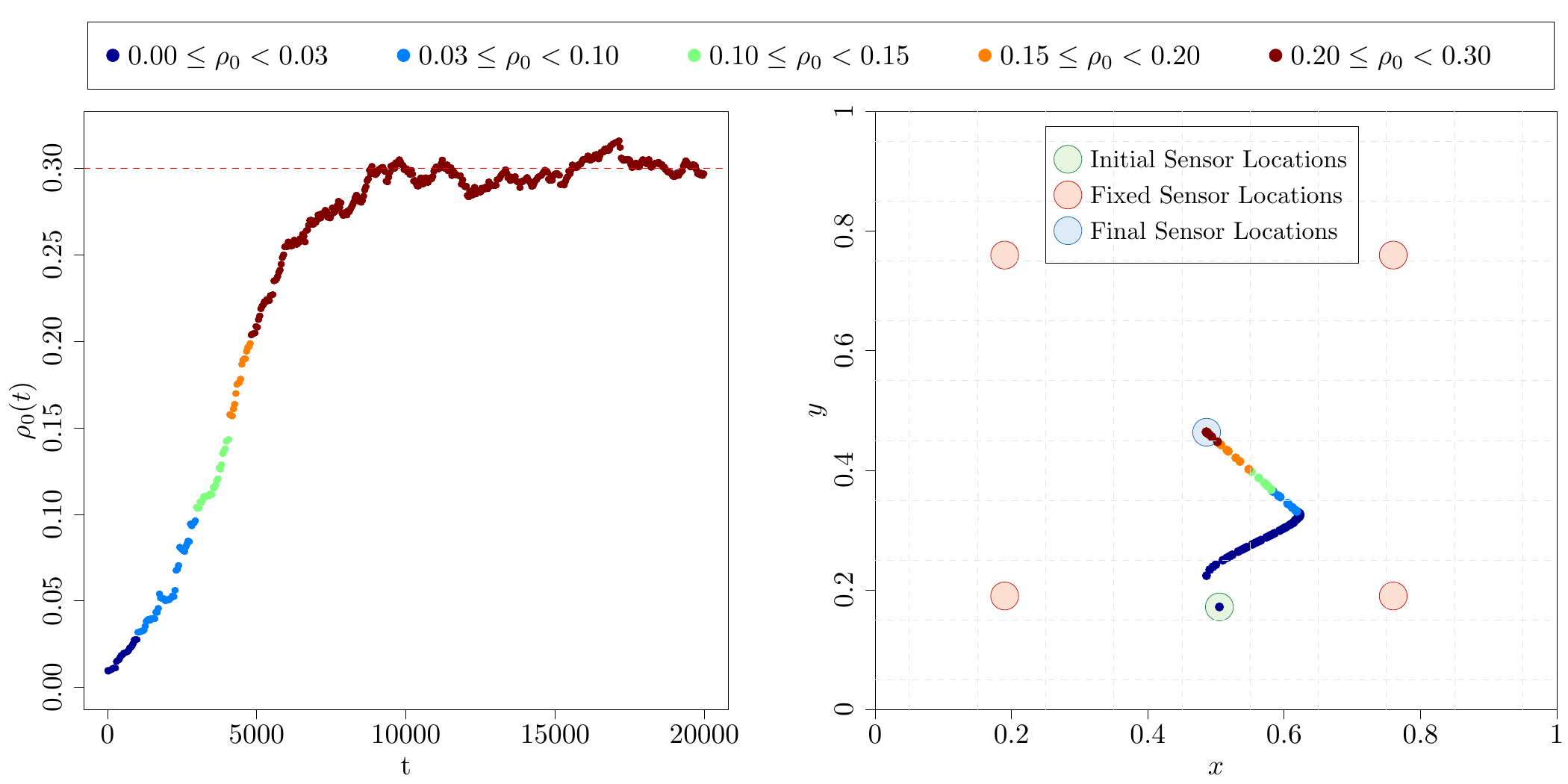}} 
  \hspace{1mm}
  \vspace{-2mm}
\caption{Simulation II. The online parameter estimates \& the optimal sensor placements (various colours); and the true parameter (red, dashed). 
\color{black} The total CPU time required for this simulation is 185s (0.00925s per iteration). \color{black}}
\label{fig_sim2_plot2}
\end{figure}

\color{black} We should note, at this point, that the asymptotic sensor placement objective function (and the asymptotic log-likelihood function) can admit multiple local optima (see Figure \ref{fig_sim2}). There is thus no guarantee that the sensor placements (or the parameter estimates) generated by our algorithm will always converge to the global optimum (i.e., the true optimal sensor placements or the true parameter values). On this point, let us make several remarks. Firstly, this is a necessary feature of any gradient based method; such methods are only guaranteed to converge to a global optimum under the rather restrictive assumption of global convexity (see \cite{Sharrock2020a}). 
Secondly, we use a {stochastic} gradient descent method, updating the sensor placements and the parameter estimates in the directions of noisy estimates of the gradients of the asymptotic sensor placement objective and the asymptotic log-likelihood. In comparison to a (non-stochastic) gradient descent scheme, this approach is significantly more likely to avoid local minima and saddle points (e.g., \cite{Bottou2010,Ge2015}). One can also use momentum \cite{Qian1999} or additional random noise to help to escape local minima. Finally, if required, one can run the algorithm multiple times using random restarts.  

\color{black}

\subsubsection{Simulation III} 
In our third numerical experiment, we investigate the performance of our algorithm under the assumption that the true model parameters $\theta^{*} = \theta^{*}(t)$ are no longer static, and contain change-points at certain points in time. The values of these parameters are shown in Figure \ref{fig2}. Meanwhile, the initial parameter estimates are now given by
\begin{align}
\smash{\theta_0 = ({\rho}_{0} = 0.25,{\sigma}^2 = 0.5,{\zeta}=0.3,{\rho}_{1}=0.2,{\gamma}=1.5,{\alpha}=\tfrac{\pi}{3},{\mu}_{x}=0.1,{\mu}_{y}=-0.15,{\tau}^2 = 0.1). } \nonumber
\end{align}
\color{black}
We also now suppose that the optimal sensor locations $\boldsymbol{o}^{*} = \boldsymbol{o}^{*}(t)$ vary in time. In particular, in this simulation we consider a scenario in which our objective is to obtain the sensor placement which minimises the uncertainty in the state estimate over the entire spatial domain, but now weighted slightly towards a set of four time-varying spatial locations. Once again, this is achieved by a suitable choice of spatial weighting operator in the sensor placement objective function, whose explicit definition we provide in Appendix \ref{app:weighting}. We assume that we have $n_y = 25$ sensors in $\Pi = [0,1]^2$, each with zero bias and equal variance. The first 16 sensors are distributed evenly towards the boundary of the spatial domain, with their locations fixed. Meanwhile, the locations of the final 9 sensors are to be optimised. We show the locations of the fixed sensors (red), the initial sensor locations of the nine sensors to be optimised (green) and the weighted spatial locations at four different time points (purple) in Figure \ref{fig2g}. 
\color{black}

It remains, once more, to specify the learning rates $\{\gamma^{i}_{\theta}(t)\}_{t\geq 0}^{i=1,\dots,9}$, $\{\gamma^{j}_{\boldsymbol{o}}(t)\}^{j=17,\dots,20}_{t\geq 0}$. \color{black} In this simulation, we set the learning rates for the parameter estimates and the sensors placements as constant. That is, $\gamma^{i}_{\theta}(t)=\gamma^{i}_0$ and $\gamma^{j}_{\boldsymbol{o}}(t) = \gamma_{0}^{j}$, with the specific values of $\gamma^{i}_0$, $\gamma_{0}^{j}$ tuned individually. This is a standard choice when the true parameters are no longer static (e.g., \cite{Soderstrom1983}). The choice of constant learning rates violates one of conditions required for convergence of the parameter estimates and the optimal sensor placements, namely, that $\int_0^{\infty}\gamma_{\theta}^2(t)\mathrm{d}t<\infty$ and $\int_0^{\infty}\gamma_{\boldsymbol{o}}^2(t)\mathrm{d}t<\infty$ (see Appendix \ref{appendixA}). There is thus no longer any guarantee that the algorithm iterates will converge to the stationary points of the two objective functions. 
They are, however, expected to oscillate around the optimal points. The advantage of constant learning rates is that the algorithm iterates can now adapt rapidly to changes in the true model parameters and the optimal sensor placements. %
\color{black}

In practice, the two-timescale stochastic gradient algorithm still performs remarkably well in this scenario (Figure \ref{fig2}). The online parameter estimates generated by the algorithm are able to track the changes in the dynamic model parameters in real time (Figure \ref{fig2a}), \color{black} while the sensor placements update in response to changes in the time-varying weighted spatial locations (Figure \ref{fig2g}). 

Let us make some brief remarks regarding the optimal sensor placements. In general, we see that, at any given time instant, the sensors tend to be positioned closer to the current locations of the four weighted points than they would be in a completely uniform configuration. For example, at $t=30000$ (top right hand panel in Figure \ref{fig2g}), all of the sensors, to a greater or lesser extent, have moved towards the south-west of the domain. At the same time, the sensors also maintain a relatively even distribution across the entire centre of the domain. This should not come as a surprise, and does indeed represent the optimal placement of the available measurement sensors. In particular, this configuration represents a trade-off between attempting to minimise the uncertainty of the state estimate over the entire spatial domain (which favours a uniform placement of sensors), while also placing a slightly greater emphasis on the accuracy of the state estimate at the four time-varying weighted locations (which favours a placement of sensors close to these locations).
\color{black}

\vspace{-6mm}
\begin{figure}[!h]
\color{black}
\centering
  \subfloat[Online parameter estimates.]{\label{fig2a}\includegraphics[width=0.48\textwidth]{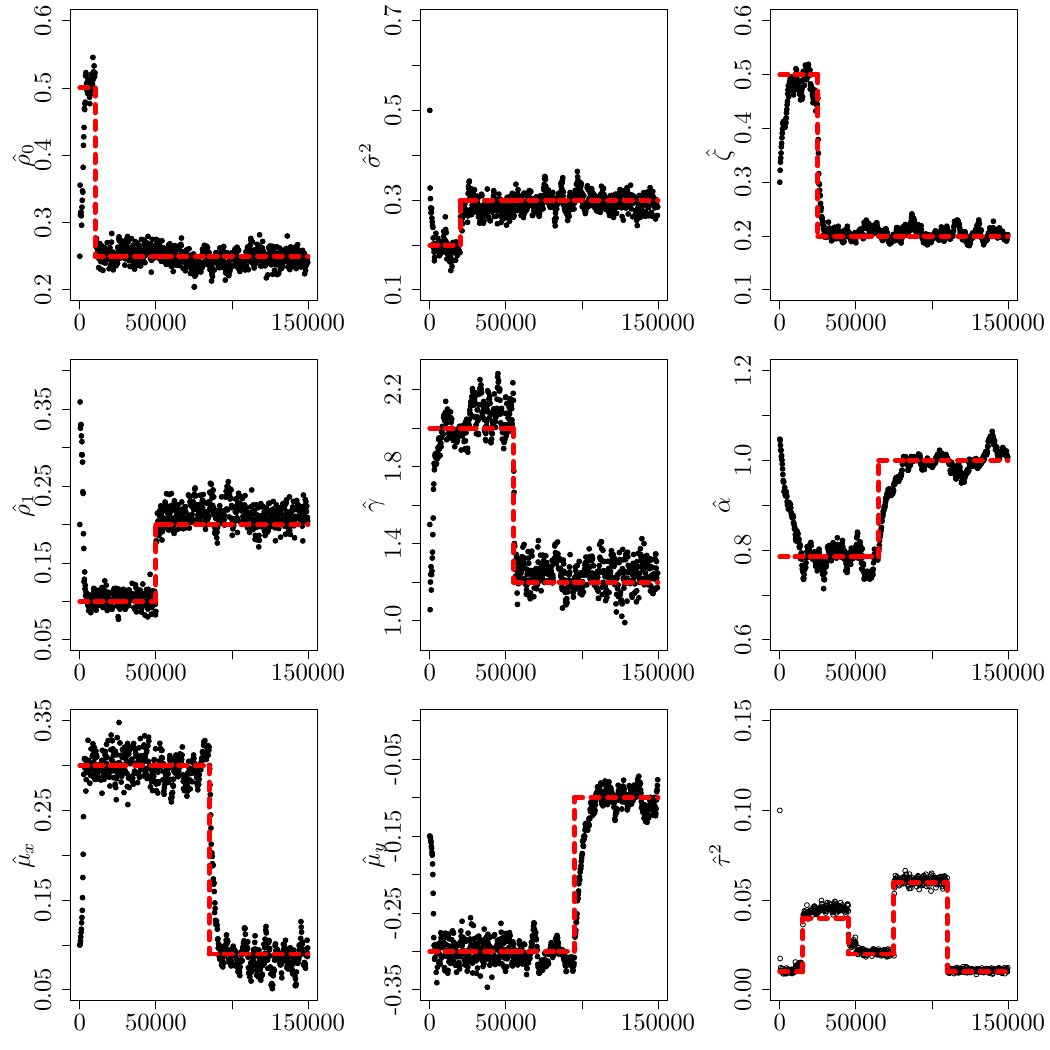}}
  \hspace{1mm}
  \subfloat[Optimal sensor placements]{\label{fig2g}\includegraphics[width=0.5\textwidth]{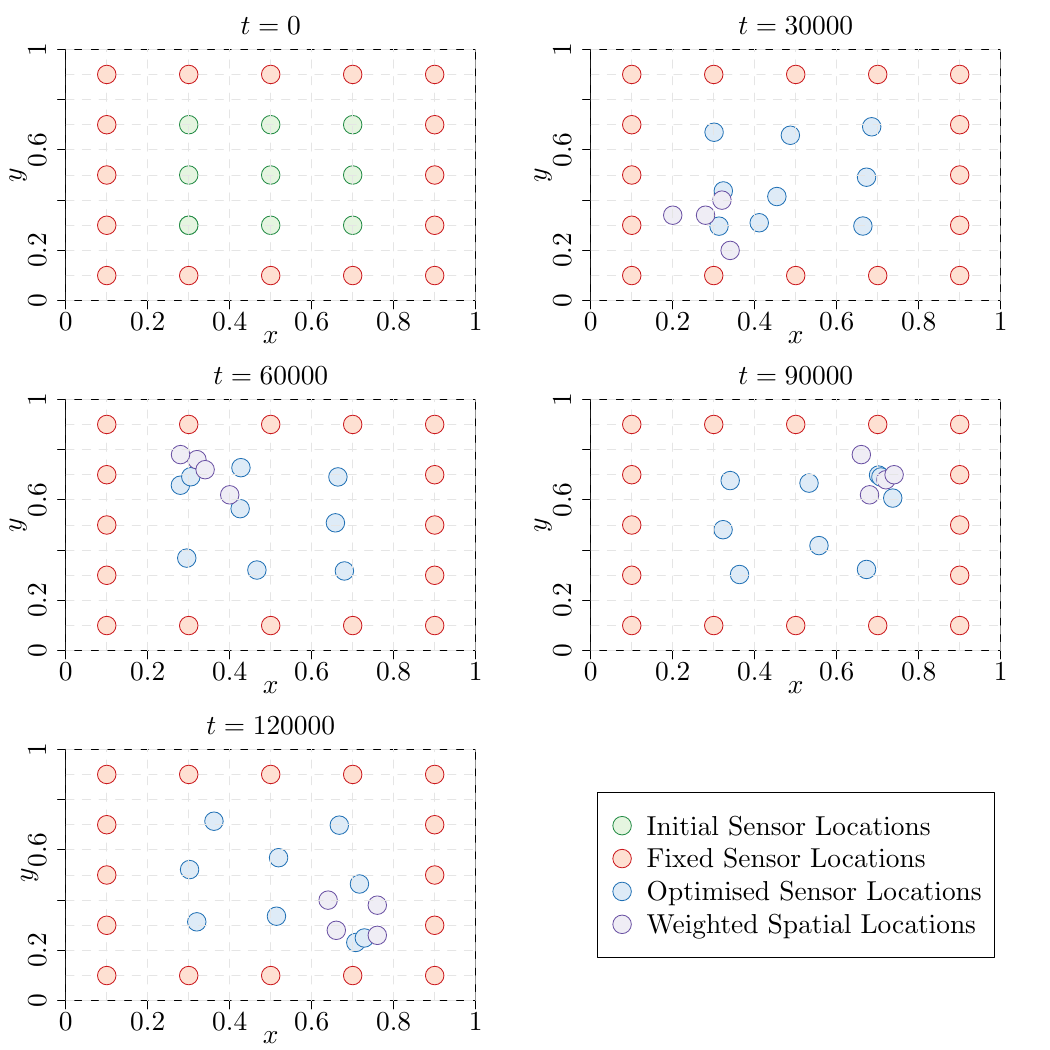}}
\caption{Simulation III. The online parameter estimates \& the optimal sensor placements; and the true parameters (red, dashed). 
The total CPU time required for this simulation is 9345s (0.0623s per iteration).}
\label{fig2}
\color{black}
\end{figure}
\vspace{-3mm}

\subsubsection{Simulation IV} In our fourth numerical experiment, we investigate the ability of our algorithm to estimate multiple unknown bias and variance parameters. We thus relax our previous assumption that the sensors all have zero bias, and the same variance. This scenario is of significant practical interest: in real-data applications, it is often necessary to calibrate the bias and variance of many measurement sensors simultaneously, and in real time. 

\color{black}
In this simulation, we assume that we have $n_y = 11$ sensors in $\Pi = [0,1]^2$, six of which have unknown bias and variance. The true values, and initial estimates, of these parameters, are given respectively by
\begin{alignat}{3}
 {\boldsymbol{\tau}^{*}}^2&=({\tau_1^{*}}^2=0.01,{\tau_2^{*}}^2=0.01,{\tau_3^{*}}^2=0.05,{\tau_4^{*}}^2=0.05,{\tau_5^{*}}^2=0.10,{\tau_6^{*}}^2=0.10) \\
 {\boldsymbol{\tau}}^2_0&=({\tau}^2_{1,0}=0.05,{\tau}^2_{2,0}=0.03,{\tau}^2_{3,0}=0.15,{\tau}^2_{3,0}=0.20,{\tau}^2_{5,0}=0.02,{\tau}^2_{6,0}=0.25)
 \end{alignat}
 and 
 \begin{alignat}{3}
{\boldsymbol{\beta}}^{*} &= (\beta^{*}_{1} = 1.0, \beta^{*}_{2} = 1.0,\beta^{*}_{3} = 1.0,\beta^{*}_{4} = 2.0,\beta^{*}_{5} = 2.0,\beta^{*}_{6} = 2.0)  \\
{\boldsymbol{\beta}}_0 &=({\beta}_{1,0}=0.1,{\beta}_{2,0}=3.0,{\beta}_{3,0}=1.5,{\beta}_{4,0}=2.5,{\beta}_{5,0}=0.0,{\beta}_{6,0}=0.5).
\end{alignat}
We estimate the bias and variance of each of these sensors independently. In terms of the parameters in the signal equation, we now assume that the values of $\theta_{3:6} = (\zeta,\rho_1,\gamma,\alpha)$ are known, and fixed equal to their true values, while the values of $\theta_{1,2,7:8} = (\rho_0,\sigma^2,\mu_x,\mu_y)$ are unknown, and to be estimated. The true values and initial values of these parameters are shown in Figure \ref{fig3a}.

Regarding the sensor placement, we assume that the locations of the six sensors whose biases and variances are unknown are to be optimised, while the locations of the remaining five sensors are fixed. The objective is to minimise the uncertainty in the state estimate over the entire spatial domain, as in Simulation II. The locations of the fixed sensors are distributed non-uniformly close to the boundary of the domain, while the initial locations of the sensors whose locations are to be optimised are distributed non-uniformly close to the centre of the domain. Finally, the step-sizes are of the same form as those in the Simulation Ib. 

The performance of the two-timescale algorithm is shown in Figure \ref{fig3}, in which we have plotted the sequence of online parameter estimates for the unknown parameters. 
As previously, the parameter estimates are all seen to converge towards a small neighbourhood of their true values. Thus, the algorithm correctly identifies the different biases and variances corresponding to each of the measurement sensors. Meanwhile, the final locations of the movable measurement sensors are distributed more evenly throughout the domain (plot omitted), leading to a 27\% reduction in the error in the optimal state estimate (0.026 to 0.019). 
\color{black}

\vspace{-4mm}
\begin{figure}[!h]
\color{black}
\captionsetup[subfloat]{captionskip=0pt}
\begin{minipage}{.49\linewidth}
\hspace{5mm}
\centering
\subfloat[The signal parameters.]{\label{fig3a}\includegraphics[width=.82\linewidth]{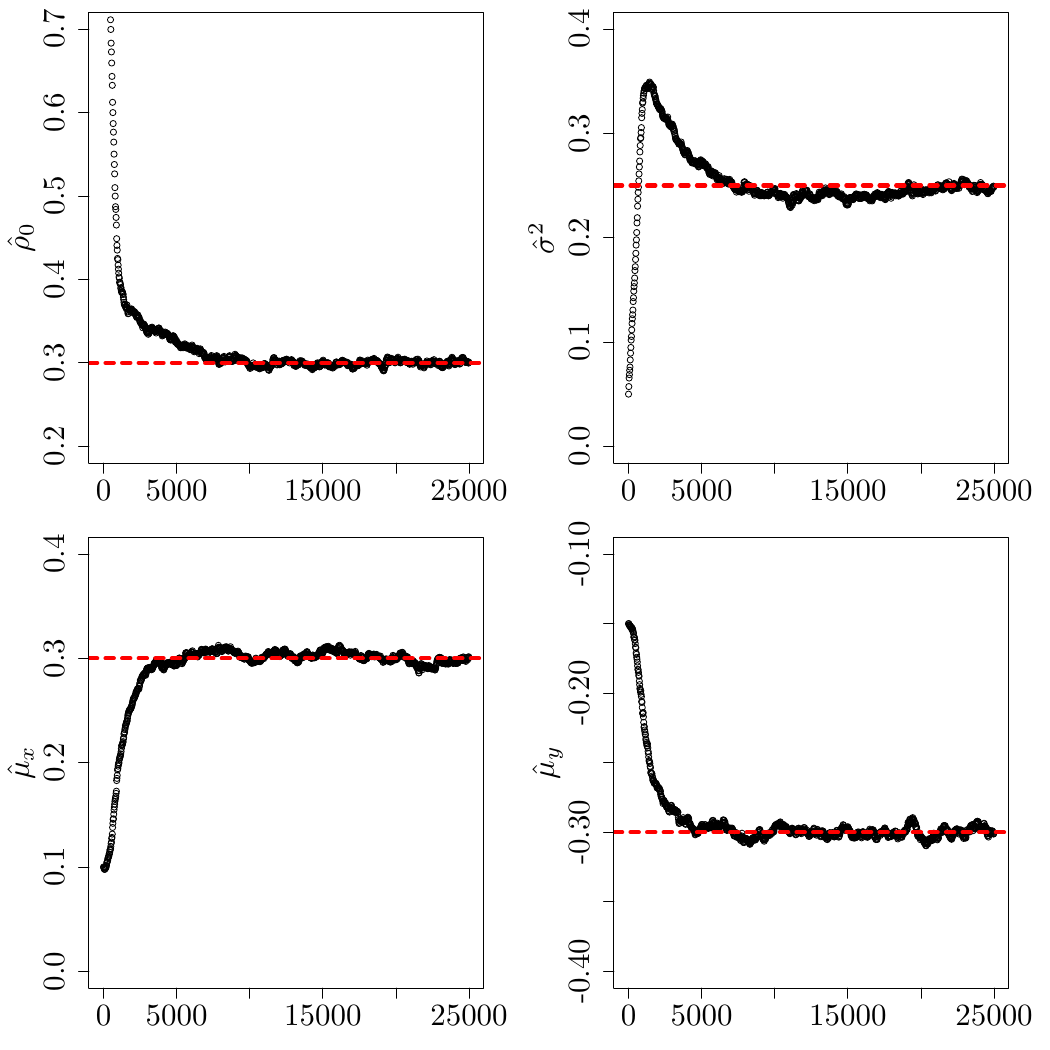}}
 \end{minipage}
 \hspace{-4mm}
 \begin{minipage}{.49\linewidth}
 \centering
 \hspace{-5mm}
 \captionsetup[subfloat]{captionskip=-4pt}
 \subfloat[The variance parameters.]{\label{fig3b}\includegraphics[width=.65\linewidth]{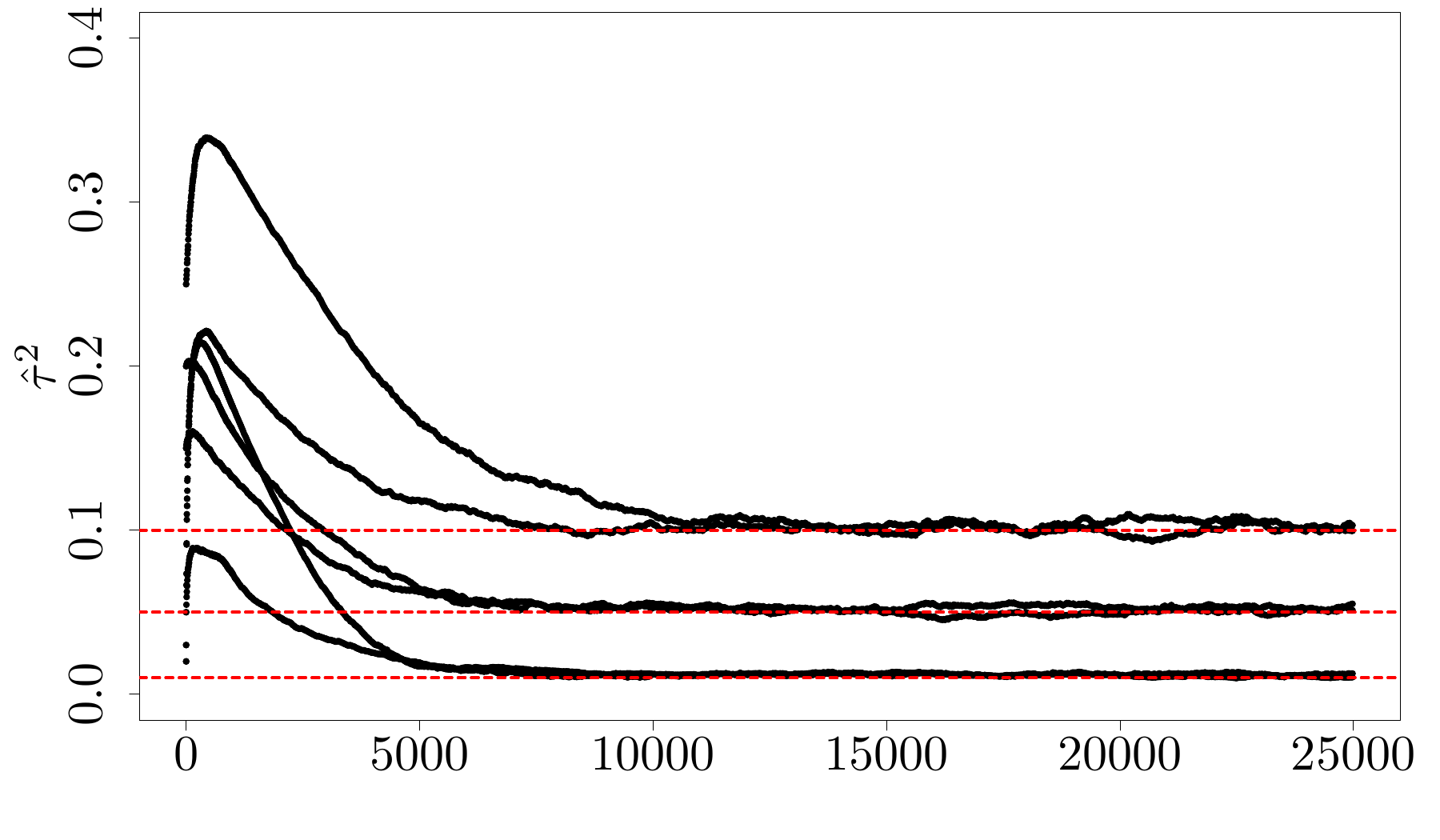}}
 \hspace{-20mm} \\
  \subfloat[The bias parameters.]{\label{fig3c}\includegraphics[width=.65\linewidth]{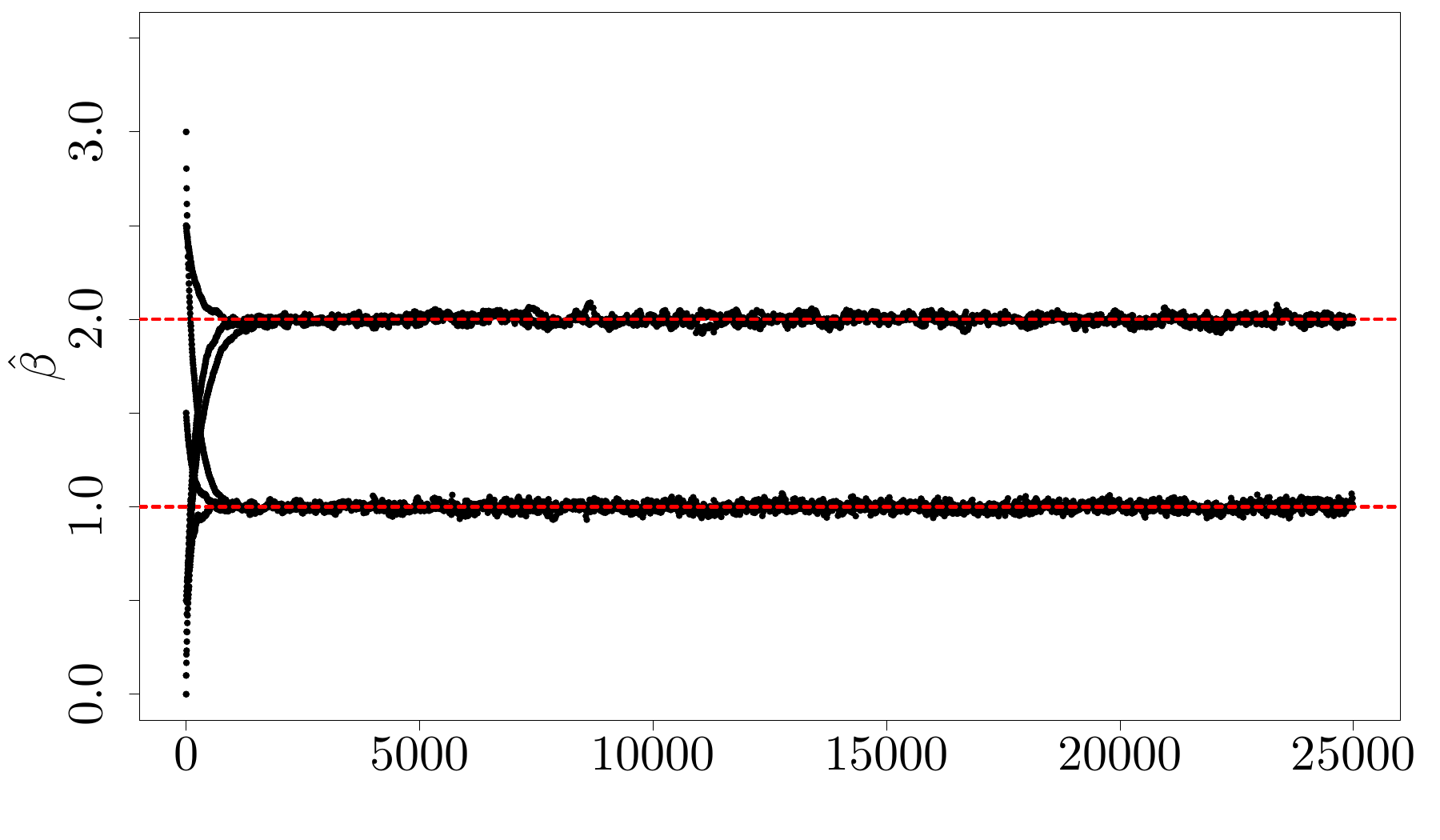}}
\end{minipage}
\caption{Simulation IV. The online parameter estimates (black); and the true parameters (red, dashed). The total CPU time required for this simulation is 888s (0.0355s per iteration). }
\label{fig3}
\vspace{-14mm}
\color{black}
\end{figure}

\subsubsection{Simulation V}  In our final numerical simulation, we investigate the performance of the two-timescale stochastic gradient descent algorithm in the presence of a spatially weighted disturbance in the signal noise. We will assume, in this simulation, that $\theta_{1:6} = (\rho_0,\sigma^2,\zeta,\rho_1,\gamma,\alpha)$ are known, while $\theta_{7:9} = (\mu_x,\mu_y,\tau^2)$ are to be estimated. The true values and initial estimates of these parameters are given respectively by 
\begin{align}
\theta^{*}&= (\mu_x = 0.10,\mu_y =-0.10, \tau^2 =0.01 ), \\
\theta_{0}& = (\mu_x = 0.39,\mu_y =-0.41,\tau^2 =0.50 ).
\end{align}

We now assume that we have $n_y = 10$ sensors $\Pi = [0,1]^2$, each with zero bias and equal variance. The locations of nine of these sensors are fixed, while the location of the final sensor is to be optimised. The locations of the fixed sensors, and the initial location of the sensor whose location is to be optimised, are shown in Figure \ref{fig_4b}. 
As in the second numerical experiment, we will suppose that the objective is to obtain the optimal state estimate over the entire spatial domain (i.e., not only at a set of target locations). We also now suppose that there is a localised disturbance in the signal noise around the point $(\tfrac{5}{12},\tfrac{5}{12})$. Thus, in the signal equation, we now specify the spatial weighting function
\begin{equation}
b(\boldsymbol{x}) = b(x,y) =  \mathrm{sech}\bigg[\bigg(\frac{(x-\tfrac{5}{12})^2}{0.2^2}+\frac{(y-\tfrac{5}{12})^2}{0.2^2}\bigg)^{\frac{1}{2}}\bigg].
\end{equation}

The performance of the two-timescale algorithm is illustrated in Figures \ref{fig4} and \ref{fig5}, in which we have plotted trial averaged sequences of the optimal sensor placements, and the online parameter estimates, respectively. As previously, the online parameter estimates all converge to a neighbourhood around their true values. Meanwhile, the optimal sensor placement is seen to converge to a location close to, but not directly at, the centre of the local disturbance. The slight offset to the south-west of the centre of this disturbance is explained by the presence of the fixed sensor at $(0.5,0.5)$, which is just to the north-east of the centre of the disturbance. These numerical results corroborate those also obtained in, e.g., \cite{Burns2015,Zhang2018}.

\begin{figure}[!h]
\vspace{-5mm}
\captionsetup[subfloat]{captionskip=-2pt}
\centering
\subfloat[The spatial weighting $b(\boldsymbol{x})$.]{\label{fig_4a}\includegraphics[width=.33\linewidth]{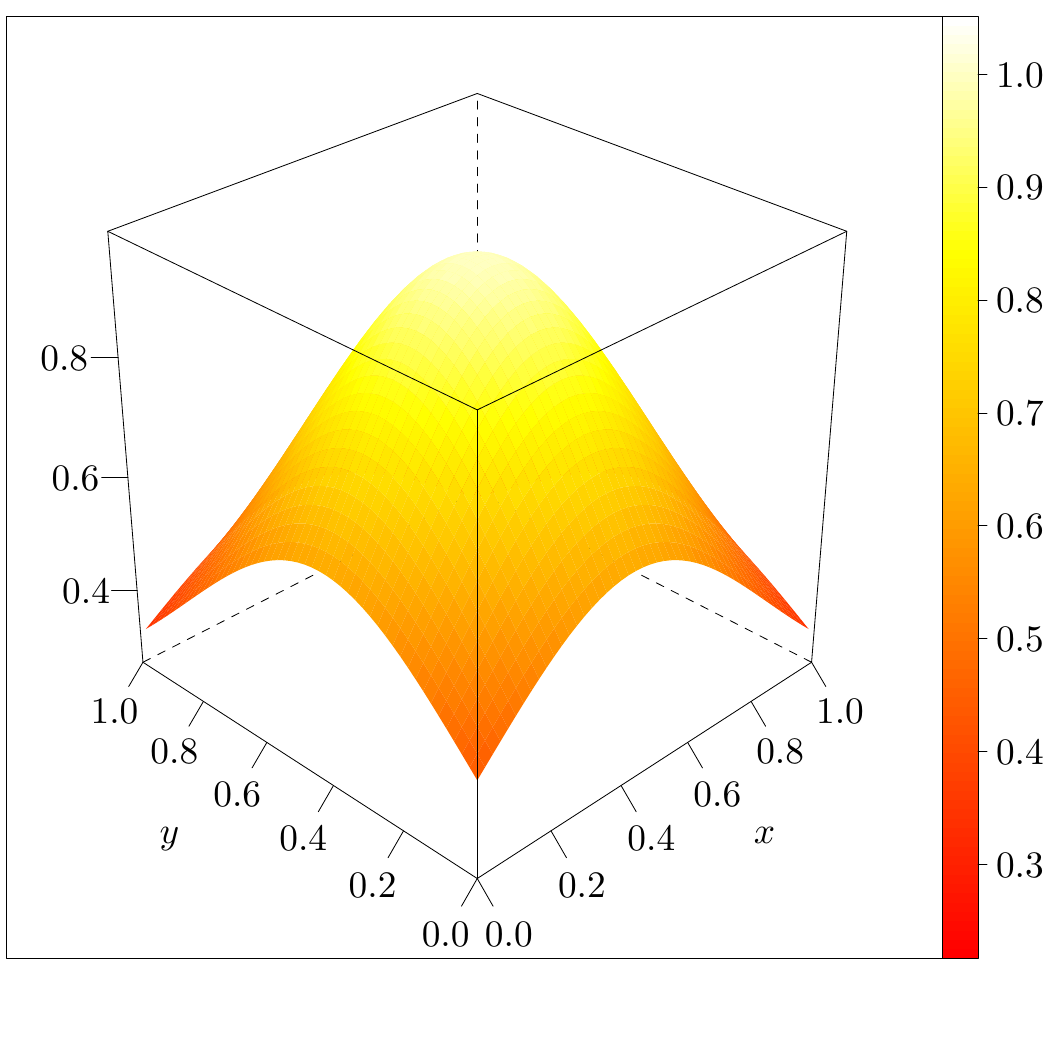}}
  \hspace{8mm}
 \subfloat[Sensor placements on $\Pi = {[0,1]}^2$.]{\label{fig_4b}\includegraphics[width=.33\linewidth]{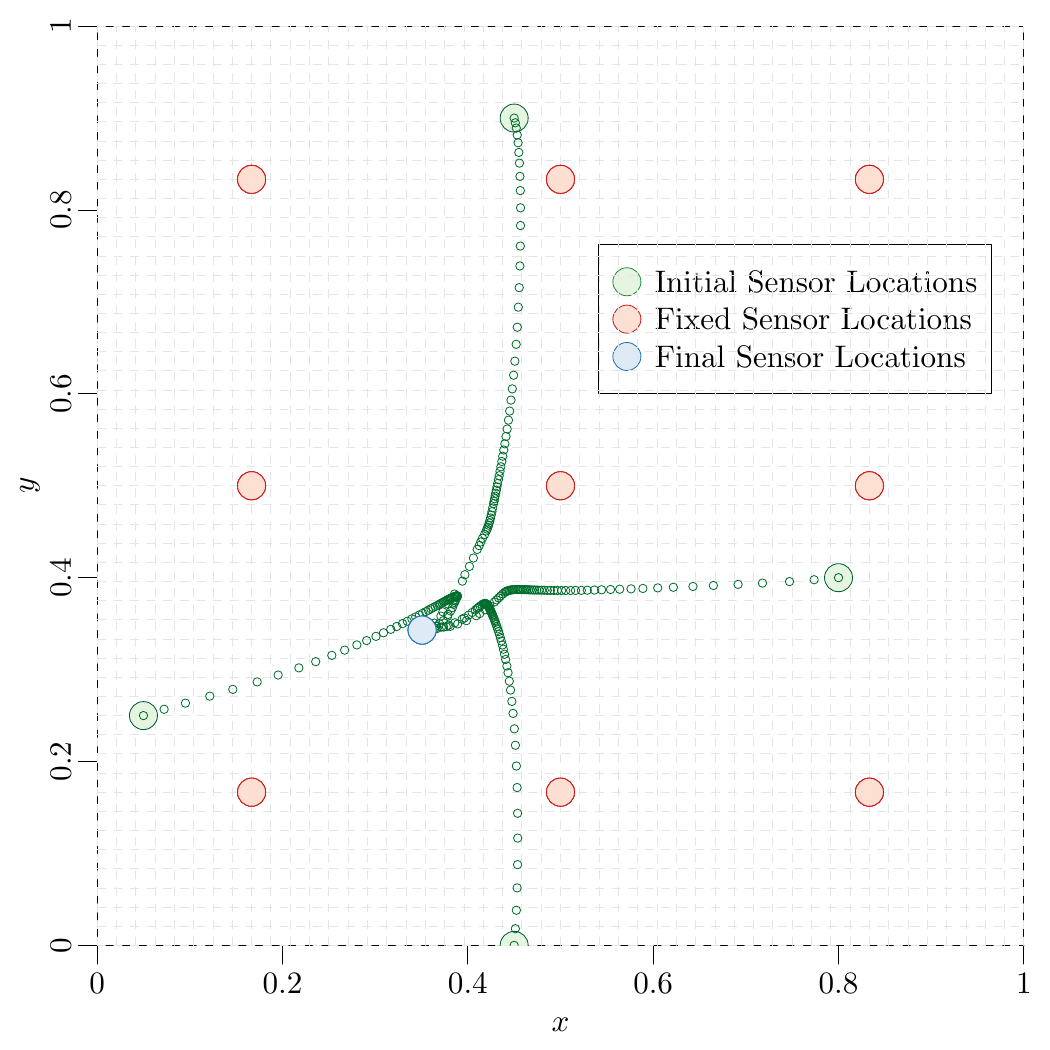}}
\caption{Simulation V. The sequence of optimal sensor placements for four different initial conditions. The average CPU time required for this simulation is 41s (0.016s per iteration). }
\label{fig4}
\end{figure}

\vspace{-4mm}
\begin{figure}[!h]
\color{black}
\vspace{-4mm}
\captionsetup[subfloat]{captionskip=0pt}
\centering
\hspace{-2mm}
\subfloat[Online parameter estimates.]{\label{fig_5a}\includegraphics[width=.44\linewidth]{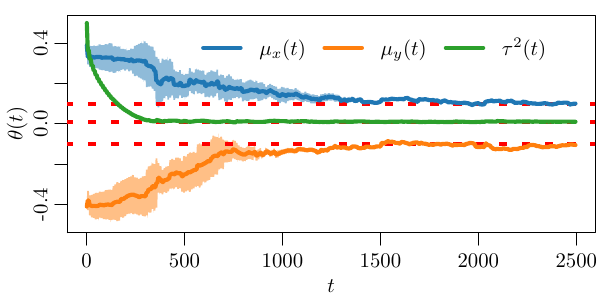}}
  \hspace{1mm}
 \subfloat[Optimal sensor placements.]{\label{fig_5b}\includegraphics[width=.44\linewidth]{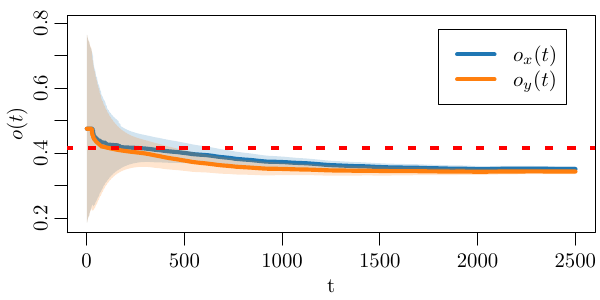}}
\caption{Simulation V. The online parameter estimates \& the optimal sensor placements (various colours, solid); and the true parameters \& centre of the signal noise disturbance (red, dashed). The plots are averaged over $400$ trials which use different initialisations of the movable sensor.}
\label{fig5}
\end{figure}
\vspace{-2mm}

\section{Conclusions}
\label{sec:conclusions}

In this article, we have considered the problem of joint online parameter estimation and optimal sensor placement for a partially observed, infinite-dimensional linear diffusion process. We have presented a solution to this problem in the form of a two-timescale stochastic gradient descent algorithm,
and shown in detail how this algorithm can be successfully applied to a partially observed stochastic advection-diffusion equation, which depends in a highly non-linear fashion on a set of nine or more unknown model parameters. Our numerical results have illustrated the effectiveness of the proposed approach in a number of scenarios of practical interest. Moreover, they have highlighted the advantages of tackling the problems of online parameter estimation and optimal sensor placement together. 

There are several important extensions to the work presented in this paper. From a theoretical perspective, the main open problem is to obtain rigorous convergence results for the parameter estimates and optimal sensor placements generated by the finite-dimensional approximation of Algorithm \eqref{RML_ROSP_1} - \eqref{RML_ROSP_2} to the stationary points of the true, infinite-dimensional asymptotic log-likelihood and asymptotic sensor placement objective function. This essentially represents an extension of \cite[Proposition 3.1]{Sharrock2020a} to include an infinite-dimensional latent signal process, and the effect of using a finite-dimensional approximation. There are, in fact, several existing results along these lines for the optimal sensor placement (e.g., \cite{Burns2015,Zhang2018}). The main obstacle, therefore, is to obtain corresponding results for the MLE.

From a computational perspective, a natural extension of our numerical results is to consider the case in which the advection-diffusion operator is no longer spatially or temporally invariant, with the drift, diffusion, and damping parameters allowed to vary in space (see, e.g., \cite{Liu2019}) or in time. It is also of interest to consider alternative, more complex spatial domains and boundary conditions, which are typical of environmental monitoring applications in urban settings. We are also interested in considering other sensor configurations, perhaps allowing explicitly for the possibility of mobile sensors whose motion is governed by some controlled ODEs, or for differing levels of communication between sensors (e.g., \cite{Burns2015,Demetriou2009}). Finally, we would like to extend the results in this paper to partially observed diffusion processes governed by non-linear dissipative SPDEs, such as the stochastic Navier-Stokes or Kuramoto-Sivashinski equations \cite{Breit2018,Jardak2010}. In such models, of course, it will no longer be possible to compute the filter or tangent filter analytically, and it will be necessary to replace these quantities by suitable approximations (e.g., \cite{Beskos2020,Kantas2015}). 
\newpage

\appendix{
\section{Convergence of the Joint Online Parameter Estimation and Optimal Sensor Placement Algorithm} \label{appendixA} In this Appendix, we provide an almost sure convergence result for the joint parameter estimation and optimal sensor placement algorithm proposed in the main article. This corresponds to \cite[Proposition 3.1]{Sharrock2020a}, adapted appropriately to the linear Gaussian case. This result guarantees the almost sure convergence of the online parameter estimates and optimal sensor placements generated by the finite-dimensional approximation of Algorithm \eqref{RML_ROSP_1} - \eqref{RML_ROSP_2} to the stationary points of the finite-dimensional approximations of the asymptotic log-likelihood and the asymptotic sensor placement objective function, respectively. We also state and verify several sufficient conditions in the linear Gaussian case. 

\subsection{Additional Notation}
We will require the following additional notation. We will write $||\cdot||$ to denote both the standard Euclidean norm and the standard matrix norm; and $||\cdot||_{\mathrm{HS}}$ to denote the Hilbert-Schmidt norm. We will say that a function $H:\Theta\times\Pi^{n_y}\times\mathbb{R}^{d}\rightarrow \mathbb{R}$ satisfies the polynomial growth property (PGP) if there exist $q,K>0$ such that, for all $\theta\in\Theta$, $\boldsymbol{o}\in\Pi^{n_y}$,
\begin{equation}
|H(\theta,\boldsymbol{o},x)|\leq  K(1+||x||^q).
\end{equation}
We will then write $\smash{\mathbb{H}^{d}}$ to denote the space of functions $H:\Theta\times\Pi^{n_y}\times\mathbb{R}^{d}\rightarrow \mathbb{R}$ such that $H(\cdot,\boldsymbol{o},x)\in C^2(\Theta)$, $H(\theta,\cdot,x)\in C^2(\Pi^{n_y})$, $H(\theta,\boldsymbol{o},\cdot)\in C(\mathbb{R}^{d})$, and $\nabla^{i}_{\theta}H(\cdot,\cdot,x)$, $\nabla^{i}_{\boldsymbol{o}}H(\cdot,\cdot,x)$, $i=1,2$, are H\"older continuous with exponent $\delta$. We will also write $\mathbb{H}^{d}_{c}$ to denote the subspace consisting of $H\in\mathbb{H}^{d}$ that are centred, i.e., $
\int_{\mathbb{R}^{d}} H(\theta,\boldsymbol{o},x)\mu_{\theta,\boldsymbol{o}}(\mathrm{d}x) = 0$.
Finally, we will write $\bar{\mathbb{H}}^{d}$ to denote the subspace consisting of $H\in\mathbb{H}^{d}$ such that $H$ and all of its first and second derivatives with respect to $\theta$ and $\boldsymbol{o}$ satisfy the PGP.

\color{black}
\subsection{The Finite-Dimensional Kalman-Bucy Filter}
Let $\smash{\hat{\alpha}_n(\theta,\boldsymbol{o},t) = \mathbb{E}_{\theta,\boldsymbol{o}}[\alpha_n(t)|\mathcal{F}_t^{Y_n}]}$ and $\smash{\Sigma_n^{\alpha}(\theta,\boldsymbol{o},t) = \mathrm{Cov}_{\theta,\boldsymbol{o}}[\alpha_n(t)|\mathcal{F}_t^{Y_n}]}$ denote the conditional mean and conditional covariance of the vector of Fourier coefficients $\smash{{\alpha}_{n}(t)=\{\alpha_{\boldsymbol{k}}(t)\}_{\boldsymbol{k}\in\Lambda_n}}$ given the observation sigma algebra $\smash{\mathcal{F}_t^{Y_n} = \sigma\{y_n(s):s\in[0,t]\}}$. 
Since \eqref{finite_dim_signal} - \eqref{finite_dim_obs} are finite dimensional, these quantities evolve according to the finite dimensional Kalman-Bucy filter (e.g., \cite{Kalman1961}). In particular, we have that
\begin{align}
 \mathrm{d}\hat{\alpha}_n(\theta,\boldsymbol{o},t) &= {A}_n(\theta)\hat{{\alpha}}_n(\theta,\boldsymbol{o},t)\mathrm{d}t \label{fin_dim_mean}  \\
&\hspace{5mm}+\Sigma_n^{\alpha}(\theta,\boldsymbol{o},t) {C}^{*}_n(\theta,\boldsymbol{o})\mathcal{R}^{-1}(\boldsymbol{o})\big(\mathrm{d}y_n(t) - {C}_n(\theta,\boldsymbol{o})\hat{{\alpha}}_n(\theta,\boldsymbol{o},t)\mathrm{d}t\big), \nonumber  \\[2mm] 
\dot{\Sigma}^{\alpha}_{n}(\theta,\boldsymbol{o},t)&={A}_n(\theta) \Sigma^{\alpha}_{n}(\theta,\boldsymbol{o},t) + \Sigma^{\alpha}_n(\theta,\boldsymbol{o},t){A}^{*}_n(\theta) \label{fin_dim_ricatti} \\
&\hspace{5mm}+{B}_n{Q}_n(\theta){B}^{*}_n-\Sigma^{\alpha}_n(\theta,\boldsymbol{o},t) {C}^{*}_n(\theta,\boldsymbol{o})\mathcal{R}^{-1}(\boldsymbol{o}){C}_n(\theta,\boldsymbol{o})\Sigma^{\alpha}_n(\theta,\boldsymbol{o},t). \hspace{-2mm}  \nonumber
\end{align} 

\subsection{The Finite-Dimensional Objective Functions}
Using these quantities, we can obtain approximations of the asymptotic log-likelihood function and the asymptotic sensor placement objective function. In particular, we define 
\begin{align}
\tilde{\mathcal{L}}_n(\theta,\boldsymbol{o}) &= \lim_{t\rightarrow\infty}\frac{1}{t} \left[ \int_0^t \langle R^{-1}(\boldsymbol{o}) C_n(\theta,\boldsymbol{o}) \hat{\alpha}_n(\theta,\boldsymbol{o},s),\mathrm{d}y_n(s)\rangle \right. \\
&\left.\hspace{20mm}- \frac{1}{2}\int_0^t ||\mathcal{R}^{-1}(\boldsymbol{o}) C_n(\theta,\boldsymbol{o}) \hat{\alpha}_n(\theta,\boldsymbol{o},s)||^2\mathrm{d}s\right] \nonumber \\
\tilde{\mathcal{J}}_n(\theta,\boldsymbol{o}) &= \lim_{t\rightarrow\infty} \frac{1}{t}\int_0^t \mathrm{Tr}\left[M_n(s) \Sigma_n^{\alpha}(\theta,\boldsymbol{o},s)\right]\mathrm{d}s
\end{align}
where $M_n(s) = \Pi_n \mathcal{M}(s) \Pi_n$ denotes the finite-dimensional approximation of the bounded linear operator $\mathcal{M}(s)$ used to weight significant parts of the state estimate.

\subsection{The Finite-Dimensional Algorithm}
We can also now define a finite-dimensional approximation of the joint online parameter estimation and optimal sensor placement algorithm, namely, 
\begin{subequations}
\begin{alignat}{2}
\hspace{-1mm} \mathrm{d}{\theta}_n(t) \hspace{-.5mm}&=\hspace{-.5mm} \left\{ \hspace{-.5mm}\begin{array}{l} \hspace{-1mm}-\gamma_{\theta}(t) \big[{C}_n(\theta,\boldsymbol{o})\hat{\alpha}_n^{\theta}(\theta,\boldsymbol{o},t)\big]^T{R}^{-1}(\boldsymbol{o})\big[{C}_n(\theta,\boldsymbol{o})\hat{\alpha}_n(\theta,\boldsymbol{o},t)\mathrm{d}t-\mathrm{d}y_n(t)\big] \big|_{\substack{\theta=\theta_n(t) \\ \boldsymbol{o}=\boldsymbol{o}_n(t)}} \\ \hspace{-1mm} 0 \end{array}\right. &&\begin{array}{ll}  \hspace{-4mm}, & \hspace{-2mm} {\theta}_n(t)\in\Theta, \\[3mm] \hspace{-4mm}, & \hspace{-2mm} {\theta}_n(t)\not\in\Theta,  \end{array} \hspace{-12mm} \label{RML_ROSP_1)} \\
\hspace{-1mm} \mathrm{d}{\boldsymbol{o}}_n(t)\hspace{-.5mm}&=\hspace{-.5mm} \left\{\hspace{-.5mm} \begin{array}{l}  \hspace{-1mm}-\gamma_{\boldsymbol{o}}(t)\mathrm{Tr}^{\boldsymbol{o}}\big[{M}_n(t)\Sigma_n^{\alpha}(\theta,\boldsymbol{o},t)\big]^T\mathrm{d}t \big|_{\substack{\theta=\theta_n(t) \\ \boldsymbol{o}=\boldsymbol{o}_n(t)}}  \\ \hspace{-1mm} 0 \end{array}\right. &&\begin{array}{ll} \hspace{-4mm}, & \hspace{-3mm}  {\boldsymbol{o}}_n(t)\in\Omega^{n_{y}}, \\[3mm]  \hspace{-4mm}, & \hspace{-3mm}  {\boldsymbol{o}}_n(t)\not\in\Omega^{n_{y}}.  \end{array} \hspace{-12mm} \label{RML_ROSP_2_}
 \vspace{-8mm}
\end{alignat}
\end{subequations}
\color{black}

\subsection{Convergence Result}
Finally, we can now provide a statement of the relevant convergence result from \cite{Sharrock2020a}. In particular, this corresponds to \cite[Proposition 3.1]{Sharrock2020a}. 
\begin{proposition} \label{prop1}
Assume that Conditions \ref{assumption1}, \ref{assumption2} - \ref{assumption5}, \ref{assumption10}, and \ref{assumption11} hold (see Section \ref{app:assumptions}). Then, with probability one, 
\begin{align}
\lim_{t\rightarrow\infty}\nabla_{\theta}\tilde{\mathcal{L}}_n(\theta_n(t),\boldsymbol{o}_n(t)) = \lim_{t\rightarrow\infty}\nabla_{\boldsymbol{o}}\tilde{\mathcal{J}}_n(\theta_n(t),\boldsymbol{o}_n(t)) = 0,
\end{align}
or
\begin{align}
\lim_{t\rightarrow\infty}(\theta_n(t),\boldsymbol{o}_n(t)) \in \{(\theta,\boldsymbol{o}):\theta\in \partial\Theta\cup \boldsymbol{o}\in \partial\Pi^{n_y}\}.
\end{align}
\end{proposition}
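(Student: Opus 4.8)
The plan is to obtain the result as an application of the general two-timescale stochastic approximation theorem \cite[Proposition~3.1]{Sharrock2020a}, so the work is to recast the finite-dimensional version of Algorithm \eqref{RML_ROSP_1}--\eqref{RML_ROSP_2} in that framework and to verify its hypotheses in the linear Gaussian setting. First I would introduce the extended process $\Phi_t = \big(u_n(t),\hat u_n(\theta,\boldsymbol o,t),\Sigma_n(\theta,\boldsymbol o,t),\hat u_n^{\theta}(\theta,\boldsymbol o,t),\hat u_n^{\boldsymbol o}(\theta,\boldsymbol o,t),\Sigma_n^{\theta}(\theta,\boldsymbol o,t),\Sigma_n^{\boldsymbol o}(\theta,\boldsymbol o,t)\big)$ consisting of the latent state, the finite-dimensional Kalman--Bucy filter \eqref{fin_dim_ricatti}--\eqref{fin_dim_mean}, and the corresponding filter derivatives; for frozen $(\theta,\boldsymbol o)$ this is an (asymptotically time-homogeneous, ultimately Gaussian) diffusion whose covariance blocks converge deterministically to the solution of the algebraic Riccati equation \eqref{inf_dim_alg_ricatti} and its derivatives, and whose mean blocks form an ergodic linear SDE system driven by $(v_{n,\theta},w_{\boldsymbol o})$ with a unique invariant law $\mu_{\theta,\boldsymbol o}$. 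I would then write the two recursions as $\mathrm d\theta(t) = -\gamma_\theta(t)\,H_\theta(\theta(t),\boldsymbol o(t),\Phi_t)\,\mathrm dt + \gamma_\theta(t)\,\mathrm dM_t$ and $\mathrm d\boldsymbol o(t) = -\gamma_{\boldsymbol o}(t)\,H_{\boldsymbol o}(\theta(t),\boldsymbol o(t),\Phi_t)\,\mathrm dt$, where $M$ is the $\mathcal F_t^Y$-martingale generated by the innovations and $H_\theta,H_{\boldsymbol o}$ are the integrands appearing in \eqref{RML_ROSP_1}--\eqref{RML_ROSP_2}; the key identification is that their averages against the invariant law are the relevant gradients, i.e.\ $\int H_\theta(\theta,\boldsymbol o,x)\,\mu_{\theta,\boldsymbol o}(\mathrm dx) = -\nabla_\theta\tilde{\mathcal L}_n(\theta,\boldsymbol o)$ and $\int H_{\boldsymbol o}(\theta,\boldsymbol o,x)\,\mu_{\theta,\boldsymbol o}(\mathrm dx) = \nabla_{\boldsymbol o}\tilde{\mathcal J}_n(\theta,\boldsymbol o)$.

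I would then check the following four things. (i) \emph{Stability and ergodicity:} under the (uniform) stabilisability and detectability assumptions of Section~\ref{sec:filtering}, the operator $A_n(\theta) - \Sigma_{n,\infty}(\theta,\boldsymbol o)C_n^{*}(\theta,\boldsymbol o)\mathcal R^{-1}(\boldsymbol o)C_n(\theta,\boldsymbol o)$ generates an exponentially stable semigroup, which yields geometric ergodicity of $\Phi$, uniformly over compact subsets of $\Theta\times\Pi^{n_y}$, together with uniform exponential decay for the filter-derivative blocks (which solve linear inhomogeneous equations forced by the filter). (ii) \emph{Regularity:} since the matrices $A_n(\theta)$, $B_n$, $Q_n(\theta)$, $C_n(\theta,\boldsymbol o)$ and $\mathcal R(\boldsymbol o)$ depend smoothly on $(\theta,\boldsymbol o)$ and the stabilising solution of \eqref{inf_dim_alg_ricatti} depends smoothly on these data (implicit function theorem applied to the algebraic Riccati map), the filter, its derivatives, and hence $H_\theta$ and $H_{\boldsymbol o}$ belong to $\bar{\mathbb{H}}^{d}$ with the required H\"older exponents and polynomial growth. (iii) \emph{Asymptotic objectives:} applying the continuous-time ergodic theorem to $\Phi$ shows that $\tilde{\mathcal L}_n(\theta,\boldsymbol o) = \lim_{t\to\infty}\tfrac1t\mathcal L_{n,t}(\theta,\boldsymbol o)$ exists almost surely, equals the stationary average of the integrand, and is $C^2$ in $(\theta,\boldsymbol o)$; a continuous Ces\`aro argument gives $\tilde{\mathcal J}_n(\theta,\boldsymbol o) = \mathrm{Tr}[\mathcal M_\infty\Sigma_{n,\infty}(\theta,\boldsymbol o)]$, likewise $C^2$. (iv) \emph{Poisson equations:} for each centred $H\in\mathbb{H}^{d}_{c}$ one solves $\mathcal G_{\theta,\boldsymbol o}\hat H = H$, where $\mathcal G_{\theta,\boldsymbol o}$ is the generator of $\Phi$, obtaining a solution with polynomial growth in the state and locally Lipschitz dependence on $(\theta,\boldsymbol o)$; this is where (i)--(ii) are combined. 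Finally, the learning rates are non-negative, non-increasing, satisfy $\int_0^\infty\gamma_\bullet(t)\,\mathrm dt=\infty$, $\int_0^\infty\gamma_\bullet^2(t)\,\mathrm dt<\infty$, and $\gamma_\theta(t)/\gamma_{\boldsymbol o}(t)\to0$, while the projections keep $(\theta(t),\boldsymbol o(t))$ in a compact set, so the moment bounds required by \cite[Proposition~3.1]{Sharrock2020a} hold.

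With the hypotheses in place, the conclusion follows from the two-timescale ODE method \cite{Borkar1997,Borkar2008} underlying \cite[Proposition~3.1]{Sharrock2020a}: using the Poisson solution to decompose $H - h$ into a martingale increment plus a telescoping remainder that vanishes under the step-size conditions, one shows that the fast iterate $\boldsymbol o(t)$ asymptotically tracks the flow $\dot{\boldsymbol o} = -\nabla_{\boldsymbol o}\tilde{\mathcal J}_n(\theta,\boldsymbol o)$ at frozen $\theta$, while the slow iterate $\theta(t)$ sees $\boldsymbol o$ as equilibrated and tracks $\dot\theta = \nabla_\theta\tilde{\mathcal L}_n(\theta,\boldsymbol o)$; a Lyapunov/limit-set argument (with $\tilde{\mathcal J}_n$ and $\tilde{\mathcal L}_n$ serving as Lyapunov functions for the respective flows) then gives the stated dichotomy, namely that either $\nabla_\theta\tilde{\mathcal L}_n(\theta(t),\boldsymbol o(t))\to0$ and $\nabla_{\boldsymbol o}\tilde{\mathcal J}_n(\theta(t),\boldsymbol o(t))\to0$, or $(\theta(t),\boldsymbol o(t))$ approaches $\{(\theta,\boldsymbol o):\theta\in\partial\Theta \text{ or } \boldsymbol o\in\partial\Pi^{n_y}\}$. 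The main obstacle I expect is piece (iv) together with the derivative blocks in (i): the filter-derivative processes are not ergodic on their own --- they are forced, a priori unbounded linear functionals of $\Phi$ --- so establishing uniform-in-$(\theta,\boldsymbol o)$ geometric ergodicity of the full extended state, the polynomial moment bounds, and the regularity of the Poisson solutions is the technical heart of the argument; everything else is bookkeeping, made routine by the finite-dimensionality (for fixed $n$) and the linearity and Gaussianity of the model.
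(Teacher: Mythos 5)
Your proposal takes essentially the same route as the paper: the result is obtained directly as an instance of the general two-timescale convergence theorem \cite[Proposition 3.1]{Sharrock2020a}, whose hypotheses are exactly Conditions A.1, A.2a--A.2e, A.3 and A.4, so once the finite-dimensional algorithm is recognised as a special case of that framework (with the averaged drifts identified with $\nabla_{\theta}\tilde{\mathcal{L}}_n$ and $\nabla_{\boldsymbol{o}}\tilde{\mathcal{J}}_n$) the stated dichotomy follows immediately. The verification work you sketch in items (i)--(iv) is not required for the proposition as stated, since the conditions are assumed to hold; it corresponds instead to the paper's separate treatment of sufficient conditions in the linear Gaussian case.
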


\subsection{Conditions}  \label{app:assumptions}
The conditions for Proposition \ref{prop1} are stated below. For reference, these correspond to Assumptions 2.2.1, 2.2.2a, 2.2.2b, 2.2.2c, 2.2.2d.i'', 2.2.2d.ii'', 2.2.2e, and 2.1.5 - 2.1.6 in \cite{Sharrock2020a}, adapted appropriately to the linear Gaussian setting.  We remark that this set of Assumptions is sufficient for \cite[Proposition 3.1]{Sharrock2020a} by \cite[Propositions E.1, E.2]{Sharrock2020a}.
\begin{manualassumption}{A.1} \label{assumption1}
The step sizes $\{\gamma_{\theta}(t)\}_{t\geq 0}$, $\{\gamma_{\boldsymbol{o}}(t)\}_{t\geq 0}$, satisfy
\begin{subequations} 
\begin{alignat}{2}
&\lim_{t\rightarrow\infty}\gamma_\theta(t)= \lim_{t\rightarrow\infty}\gamma_{\boldsymbol{o}}(t) =\lim_{t\rightarrow\infty} \frac{\gamma_{\theta}(t)}{\gamma_{\boldsymbol{o}}(t)}=0, &&\int_0^{\infty}\gamma_{\theta}(t)\mathrm{d}t =\int_0^{\infty}\gamma_{\boldsymbol{o}}(t)\mathrm{d}t =\infty, \\
&\int_0^{\infty}\gamma^2_{\theta}(t)\mathrm{d}t,\int_0^{\infty}\gamma^2_{\boldsymbol{o}}(t)\mathrm{d}t<\infty,&&\hspace{-2mm}\int_0^{\infty}\left|\dot{\gamma}_{\theta}(t)\right|\mathrm{d}t,\int_0^{\infty}\left|\dot{\gamma}_{\boldsymbol{o}}(t)\right|\mathrm{d}t< \infty. \label{step_size_1d} 
\end{alignat}
\end{subequations}

In addition, there exist $r_{\theta},r_{\boldsymbol{o}}>0$, such that 
\begin{equation}
\lim_{t\rightarrow\infty} \gamma_{\theta}^2(t)t^{\frac{1}{2}+2r_{\theta}} =\lim_{t\rightarrow\infty} \gamma_{\boldsymbol{o}}^2(t)t^{\frac{1}{2}+2r_{\boldsymbol{o}}} = 0. \label{step_size_2}
\end{equation}
\end{manualassumption}

\begin{manualassumption}{A.2a} \label{assumption2}
The process $\{\mathcal{X}_n(\theta,\boldsymbol{o},t)\}_{t\geq 0}$, which consists of the concatenation of the finite-dimensional approximations of the latent signal process $\{\alpha_n(t)\}_{t\geq 0}$, the optimal filter $\{\hat{\alpha}_n(\theta,\boldsymbol{o},t)\}_{t\geq 0}$, $\{\Sigma^{\alpha}_n(\theta,\boldsymbol{o},t))\}_{t\geq 0}$, and the filter derivatives with respect to $\theta$, $\boldsymbol{o}$, and takes values in $\mathbb{R}^{d_n}$, is ergodic for all fixed $\theta\in\Theta$ and $\boldsymbol{o}\in\Pi^{n_y}$, with unique invariant probability measure $\mu_{n,\theta,\boldsymbol{o}}$.
\end{manualassumption}

\begin{manualassumption}{A.2b} \label{assumption3}
For all $q>0$, $\theta\in\Theta$, $\boldsymbol{o}\in\Pi^{n_y}$, there exist constants $K_{q},K^{\theta}_{q},K_{q}^{\boldsymbol{o}}>0$, such that 
\begin{subequations}
\begin{align}
\int_{\mathbb{R}^{d}} (1+||x||^{q})\mu_{n,\theta,\boldsymbol{o}}(\mathrm{d}x) &\leq K_q, \\
\int_{\mathbb{R}^{d}} (1+||x||^{q})|\nu^{(\theta)}_{n,\theta,\boldsymbol{o},i}(\mathrm{d}x)|&\leq K^{\theta}_q~,~\int_{\mathbb{R}^{d}} (1+||x||^{q})|\nu^{(\boldsymbol{o})}_{n,\theta,\boldsymbol{o},i}(\mathrm{d}x)|\leq K^{\boldsymbol{o}}_q
\end{align}
\end{subequations}
where $\smash{|\nu^{(\theta)}_{n,\theta,\boldsymbol{o},i}(\mathrm{d}x)|}$ and $\smash{|\nu^{(\boldsymbol{o})}_{n,\theta,\boldsymbol{o},i}(\mathrm{d}x)|}$ denote the total variations of the finite signed measures
$\smash{\nu^{(\theta)}_{n,\theta,\boldsymbol{o},i}=\partial_{\theta_i}\mu_{n,\theta,\boldsymbol{o}}}$, $i=1,\dots,d_1$, and $\smash{\nu^{(\boldsymbol{o})}_{n,\theta,\boldsymbol{o},i}=\partial_{\boldsymbol{o}_i}\mu_{n,\theta,\boldsymbol{o}}}$, $i=1,\dots,d_2$.
\end{manualassumption}

\begin{manualassumption}{A.2c} \label{assumption8}
The diffusion coefficient in the stochastic differential equation for the process $\{\mathcal{X}_n(\theta,\boldsymbol{o},t)\}_{t\geq 0}$ has the PGP (component-wise).
\end{manualassumption}

\begin{manualassumption}{A.2di} \label{assumption4a}
For all $H\in\mathbb{H}^{d}_c$, the Poisson equation $\smash{\mathcal{A}_{\mathcal{X}_n}v(\theta,\boldsymbol{o},x)}$ \linebreak ${=H(\theta,\boldsymbol{o},x)}$, where $\mathcal{A}_{\mathcal{X}_n}$ denotes the infinitesimal generator of $\mathcal{X}_n$,
has a unique solution $v(\theta,\boldsymbol{o},x)$ that lies in $\mathbb{H}^{d}$, with $v_n(\theta,\boldsymbol{o},\cdot)\in C^2(\mathbb{R}^{d})$. Moreover, if $H\in\bar{\mathbb{H}}^{d}$, then $v_n\in\bar{\mathbb{H}}^{d}$, and its mixed first partial derivatives with respect to $(\theta,x)$ and $(\boldsymbol{o},x)$ have the PGP.
\end{manualassumption}

\begin{manualassumption}{A.2dii} \label{assumption6}
The functions $\varphi_n,\psi_n,\phi_n,\eta_n,\zeta_n,\xi_n$, which map $\Theta\times\Pi^{n_y}\times \mathbb{R}^{d_n}$ to $\mathbb{R}^{n_y}$, $\mathbb{R}$, $\mathbb{R}^{n_y\times n_{\theta}}$, $\mathbb{R}^{n_{\theta}}$, $\mathbb{R}$, and $\mathbb{R}^{n_yn_{\boldsymbol{o}}}$, respectively, and are defined by
\begin{align}
\varphi_n(\theta,\boldsymbol{o},\mathcal{X}_n)&={C}_n(\theta,\boldsymbol{o})\hat{\alpha}_n(\theta,\boldsymbol{o}) \\
\zeta_n(\theta,\boldsymbol{o},\mathcal{X}_n) &= [{C}_n(\theta,\boldsymbol{o})\hat{\alpha}_n(\theta,\boldsymbol{o})]^T [C_n(\boldsymbol{o})\alpha_n-\tfrac{1}{2}C_n(\boldsymbol{o})\hat{\alpha}_n(\theta,\boldsymbol{o})] \\
\eta_n(\theta,\boldsymbol{o},\mathcal{X}_n) &= {C}_n(\theta,\boldsymbol{o})\hat{\alpha}^{\theta}_n(\theta,\boldsymbol{o}) \\
\psi_n(\theta,\boldsymbol{o},\mathcal{X}_n)&= [{C}_n(\theta,\boldsymbol{o})\hat{u}^{\theta}_n(\theta,\boldsymbol{o})]^T [C_n(\boldsymbol{o})\alpha_n-C_n(\boldsymbol{o})\hat{\alpha}_n(\theta,\boldsymbol{o})] \\
\iota_n(\theta,\boldsymbol{o},\mathcal{X}_n)&= \mathrm{Tr}\left[{M}_n\Sigma^{\alpha}_n(\theta,\boldsymbol{o})\right] \\
\phi_n(\theta,\boldsymbol{o},\mathcal{X}_n)&= \left[\mathrm{Tr}^{\boldsymbol{o}}\left[{M}_n\Sigma^{\alpha}_n(\theta,\boldsymbol{o})\right]\right]^T
\end{align}
have the PGP (component-wise).
\end{manualassumption}

\begin{manualassumption}{A.2e}  \label{assumption5}
For all $q>0$, for all $t\geq 0$, $\mathbb{E}[||\mathcal{X}_n(\theta(t),\boldsymbol{o}(t),t)||^q]<\infty$. Furthermore, there exists $K>0$ such that for all $t$ sufficiently large, 
\begin{subequations}
\begin{align}
\mathbb{E}\left[\sup_{s\leq t}||\mathcal{X}_n(\theta,\boldsymbol{o},s)||^q\right]&\leq K\sqrt{t}~,~~~\forall \theta\in\Theta~,~\forall \boldsymbol{o}\in\Pi^{n_y}, \\
\mathbb{E}\left[\sup_{s\leq t}||\mathcal{X}_n(\theta_n(s),\boldsymbol{o}_n(s),s)||^q\right]&\leq K\sqrt{t}.
\end{align}
\end{subequations}
\end{manualassumption}

\begin{manualassumption}{A.3} \label{assumption10}
For all $\theta\in\mathbb{R}^{n_{\theta}}$, the ordinary differential equation
\begin{equation}
\dot{\boldsymbol{o}}_n(t)= -\nabla_{\boldsymbol{o}} \tilde{\mathcal{J}}_n(\theta,\boldsymbol{o}_n(t)))
\end{equation} 
has a discrete, countable set of equilibria $\{\boldsymbol{o}_i^{*}\}_{i\geq 1} = \{\lambda_i(\theta)\}_{i\geq 1}$, where $\lambda_i:\Theta\rightarrow\Pi^{n_y}$, $i\geq 1$, are Lipschitz-continuous maps. 
\end{manualassumption}

\begin{manualassumption}{A.4} \label{assumption11}
For all $i\geq 1$, the ordinary differential equation 
\begin{equation}
\dot{\theta}_n(t)=-\nabla_{\theta}\tilde{\mathcal{L}}_n(\theta_n(t),\lambda_i(\theta_n(t))
\end{equation}
has a discrete, countable set of equilibria $\{\theta_{ij}^{*}\}_{j\geq 1}$.
\end{manualassumption}

In the general case, Assumptions \ref{assumption10} - \ref{assumption11} are somewhat challenging to verify, \color{black} although they are satisfied for a rather broad class of functions on account of standard results from Morse theory (see, e.g., \cite[Chapter 2]{Matsumoto2002}). \color{black} Thus, for the remainder of this Appendix, we will focus our attention on Assumptions \ref{assumption2} - \ref{assumption5}.

\subsection{Sufficient Conditions}
In particular, we now provide sufficient conditions for Assumptions \ref{assumption2} - \ref{assumption5} in the linear Gaussian case. We also verify that these conditions are, indeed, sufficient. 

\begin{manualassumption}{A.2i} \label{manual_assumption2i}
For all $\theta\in\Theta, \boldsymbol{o}\in\Pi^{n_y}$, the pair $\smash{({A}_n(\theta),{B}_n{Q}^{\frac{1}{2}}_n(\theta))}$ is 
stabilisable, and the pair $\smash{({A}_n(\theta),{C}_n(\theta,\boldsymbol{o}))}$ is 
detectable.
\end{manualassumption}

This is a standard sufficient condition for stability of the Kalman-Bucy filter (e.g., \cite{Curtain1978a,Kwakernaak1972}).

\begin{manualassumption}{A.2ii} \label{manual_assumption2ii}
For all $\theta\in\Theta$, the matrix $A_n(\theta)$ is stable.
\end{manualassumption}

We remark that this assumption holds for the finite-dimensional matrix representation of the advection-diffusion operator implemented in Section \ref{sec:numerics}. In this case, the matrix $A_n(\theta)$ is diagonal (see equation \eqref{matrices}), and its diagonal entries (i.e., its eigenvalues) have strictly negative real parts.

\begin{manualassumption}{A.2iii} \label{manual_assumption2iii}
For all $\theta\in\Theta$, $\boldsymbol{o}\in\Pi^{n_y}$, the pair $(\Phi_n(\theta,\boldsymbol{o}),\Psi_n(\theta,\boldsymbol{o}))$ is controllable, where $\Phi_n(\theta,\boldsymbol{o})$ and $\Psi_n(\theta,\boldsymbol{o})$ are the drift and diffusion coefficients in the SDE for the process $\{\mathcal{X}_n(\theta,\boldsymbol{o},t)\}_{t\geq 0}$. 
\end{manualassumption}

This assumption ensures that the covariance matrix of $\{\mathcal{X}_n(\theta,\boldsymbol{o},t)\}_{t\geq 0}$ is non-degenerate.

On the basis of Assumptions \ref{manual_assumption2i} - \ref{manual_assumption2iii}, we can now prove the following auxiliary results.
\begin{lemma} \label{propA0}
Suppose that Assumption \ref{manual_assumption2i} holds. Then the process $\{\mathcal{X}_n(\theta,\boldsymbol{o},t)\}_{t\geq 0}$, which consists of the concatenation of the set of Fourier coefficients $\alpha_n(t)$, the filter mean $\hat{\alpha}_n(\theta,\boldsymbol{o},t)$, and the filter derivatives, $\hat{\alpha}_{n,\theta}(\theta,\boldsymbol{o},t)$, $\hat{\alpha}_{n,\boldsymbol{o}}(\theta,\boldsymbol{o},t)$, is a multivariate Ornstein-Uhlenbeck (OU) process. 
\end{lemma}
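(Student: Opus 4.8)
The plan is to exhibit each component of $\mathcal{X}_n$ as the solution of a \emph{linear} stochastic differential equation whose drift and diffusion coefficients are \emph{deterministic} functions of time --- depending on $\theta,\boldsymbol{o}$ only through the solution of the Riccati equation \eqref{fin_dim_ricatti} and its parameter derivatives --- and then to invoke Assumption~\ref{manual_assumption2i} to argue that these coefficients converge exponentially to constant matrices, so that the concatenated process is a (time-homogeneous) multivariate Ornstein--Uhlenbeck process.

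First I would write out the dynamics component by component. The signal $\{u_n(t)\}_{t\geq 0}$ already solves the linear SDE \eqref{signal_finite_dim_fin}, with drift $A_n(\theta)$ and additive noise $B_n\,\mathrm{d}v_{n,\theta}$. Substituting the observation equation \eqref{obs_finite_dim_fin} into the Kalman--Bucy equation \eqref{fin_dim_mean} and collecting terms gives, suppressing the $(\theta,\boldsymbol{o},t)$-arguments,
\[
\mathrm{d}\hat{u}_n = A_n\hat{u}_n\,\mathrm{d}t + \Sigma_n C_n^{*}\mathcal{R}^{-1}C_n\,(u_n-\hat{u}_n)\,\mathrm{d}t + \Sigma_n C_n^{*}\mathcal{R}^{-1}\,\mathrm{d}w_{\boldsymbol{o}}(t),
\]
which is linear in $(u_n,\hat{u}_n)$ with coefficients that are deterministic functions of $(\theta,\boldsymbol{o},t)$ through $\Sigma_n(\theta,\boldsymbol{o},t)$, and is driven by $w_{\boldsymbol{o}}$. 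Differentiating this equation formally with respect to each coordinate of $\theta$ and of $\boldsymbol{o}$, the filter derivatives $\hat{u}^{\theta}_n$ and $\hat{u}^{\boldsymbol{o}}_n$ are seen to satisfy linear SDEs in the augmented state $(u_n,\hat{u}_n,\hat{u}^{\theta}_n,\hat{u}^{\boldsymbol{o}}_n)$, with coefficients assembled from $A_n$, $C_n$, $\Sigma_n$ and their $\theta$- and $\boldsymbol{o}$-derivatives --- the latter being the deterministic solutions of the Lyapunov-type equations obtained by differentiating \eqref{fin_dim_ricatti} --- and driven by $v_{n,\theta}$ and $w_{\boldsymbol{o}}$. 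Stacking all components and writing $W=(v_{n,\theta},w_{\boldsymbol{o}})$ then produces a system $\mathrm{d}\mathcal{X}_n(t)=\Phi_n(\theta,\boldsymbol{o},t)\mathcal{X}_n(t)\,\mathrm{d}t+\Psi_n(\theta,\boldsymbol{o},t)\,\mathrm{d}W(t)$, with $\Phi_n$, $\Psi_n$ deterministic and independent of the state, and $\Phi_n$ block-triangular with diagonal blocks $A_n(\theta)$ and copies of $A_n(\theta)-\Sigma_n(\theta,\boldsymbol{o},t)C_n^{*}\mathcal{R}^{-1}C_n$.

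Next I would use Assumption~\ref{manual_assumption2i}: stabilisability of $(A_n(\theta),B_nQ_n^{1/2}(\theta))$ and detectability of $(A_n(\theta),C_n(\theta,\boldsymbol{o}))$ imply, by the finite-dimensional counterpart of the Riccati convergence result recalled in Section~\ref{sec:filtering} (see also \cite{Kwakernaak1972}), that $\Sigma_n(\theta,\boldsymbol{o},t)$ converges exponentially, as $t\to\infty$, to the unique non-negative solution $\Sigma_{n,\infty}(\theta,\boldsymbol{o})$ of the finite-dimensional algebraic Riccati equation, and that the closed-loop matrix $A_n(\theta)-\Sigma_{n,\infty}(\theta,\boldsymbol{o})C_n^{*}(\theta,\boldsymbol{o})\mathcal{R}^{-1}(\boldsymbol{o})C_n(\theta,\boldsymbol{o})$ is stable. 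Since this matrix is precisely the coefficient operator of the linearised Riccati equations governing the $\theta$- and $\boldsymbol{o}$-derivatives of $\Sigma_n$, those derivatives also converge exponentially to steady states. Hence $\Phi_n(\theta,\boldsymbol{o},t)$ and $\Psi_n(\theta,\boldsymbol{o},t)$ converge to constant matrices (coinciding with the $\Phi_n(\theta,\boldsymbol{o})$, $\Psi_n(\theta,\boldsymbol{o})$ of Assumption~\ref{manual_assumption2iii}); equivalently, initialising the Riccati equation and its derivatives at their steady-state values makes $\Phi_n$ and $\Psi_n$ constant for all $t\geq 0$. Either way, $\mathcal{X}_n$ solves a linear SDE with additive noise and (asymptotically, or exactly) constant coefficients, i.e.\ it is a multivariate Ornstein--Uhlenbeck process.

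The main obstacle is the formal differentiation step: one must justify that the filter derivatives $\hat{u}^{\theta}_n,\hat{u}^{\boldsymbol{o}}_n$ --- and the $\theta$-, $\boldsymbol{o}$-derivatives of the Riccati solution --- exist and genuinely solve the differentiated equations, rather than being merely formal objects. In the present linear Gaussian, finite-dimensional setting this reduces to differentiability of the model data $A_n(\theta),C_n(\theta,\boldsymbol{o}),Q_n(\theta),\mathcal{R}(\boldsymbol{o})$ in $(\theta,\boldsymbol{o})$ together with the standard theory of smooth parameter dependence of solutions of linear ODEs and SDEs; the exponential convergence of the linearised Riccati equations then follows from the stability property obtained in the previous step, which closes the argument.
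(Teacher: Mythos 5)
Your proposal is correct and follows essentially the same route as the paper: the paper likewise substitutes the observation equation into the Kalman--Bucy equation, differentiates with respect to $\theta$ and $\boldsymbol{o}$, and stacks the resulting linear, additive-noise equations into a single linear SDE for $\mathcal{X}_n$. The one point where you hedge --- coefficients constant either asymptotically or by stationary initialisation --- is resolved in the paper by committing to the second option (initialising the Riccati equation at $\Sigma_{n,\infty}(\theta,\boldsymbol{o})$ so that the filter admits an $n$-dimensional, time-invariant representation), which is the only version that literally yields the time-homogeneous OU process the lemma asserts.
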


\begin{proof}
Under Assumption \ref{manual_assumption2i},
we have $\Sigma^{\alpha}_n(\theta,\boldsymbol{o},t)\rightarrow\Sigma^{\alpha}_{n,\infty}(\theta,\boldsymbol{o})$ as $t\rightarrow\infty$, where $\Sigma^{\alpha}_{n,\infty}(\theta,\boldsymbol{o})$ denotes the solution of the algebraic Ricatti equation (e.g., \cite{Kwakernaak1972})
\begin{align}
0&={A}_n(\theta) \Sigma^{\alpha}_{n,\infty}(\theta,\boldsymbol{o}) + \Sigma^{\alpha}_{n,\infty}(\theta,\boldsymbol{o}){A}^{*}_n(\theta) \label{fin_dim_alg_ricatti} \\
&\hspace{5mm}+{B}_n{Q}_n(\theta){B}^{*}_n-\Sigma^{\alpha}_{n,\infty}(\theta,\boldsymbol{o}) {C}_n^{*}(
\theta,\boldsymbol{o}){R}^{-1}(\boldsymbol{o}){C}_n(\theta,\boldsymbol{o})\Sigma^{\alpha}_{n,\infty}(\theta,\boldsymbol{o}). \nonumber 
 \end{align}
 By initialising the Kalman-Bucy filter with $\Sigma^{\alpha}_{n,\infty}(\theta,\boldsymbol{o})$, its representation can be made $n$-dimensional. In particular, it can now be represented solely in terms of the process $\hat{\alpha}_n(\theta,\boldsymbol{o},t)$, which evolves according to 
\begin{align}
\mathrm{d}\hat{\alpha}_n(\theta,\boldsymbol{o},t) &= {A}_n(\theta)\hat{{\alpha}}_n(\theta,\boldsymbol{o},t)\mathrm{d}t  \label{filter_asymptotic} \\
&\hspace{5mm}+\Sigma^{\alpha}_{n,\infty}(\theta,\boldsymbol{o}) {C}^{*}_n(\theta,\boldsymbol{o}){R}^{-1}(\boldsymbol{o})\big(\mathrm{d}y_n(t) - {C}_n(\theta,\boldsymbol{o})\hat{{\alpha}}_n(\theta,\boldsymbol{o},t)\mathrm{d}t\big). \nonumber 
\end{align}
Using the finite-dimensional observation equation \eqref{finite_dim_obs}, we can rewrite this as
\begin{align}
\mathrm{d}\hat{\alpha}_n(\theta,\boldsymbol{o},t) = {A}_n(\theta)\hat{{\alpha}}_n(\theta,\boldsymbol{o},t)\mathrm{d}t &+ \Sigma^{\alpha}_{n,\infty}(\theta,\boldsymbol{o}) {C}^{*}_n(\theta,\boldsymbol{o}){R}^{-1}(\boldsymbol{o}){C}_n(\theta,\boldsymbol{o})\alpha(t)\mathrm{d}t  \label{A42} \\
&-  \Sigma^{\alpha}_{n,\infty}(\theta,\boldsymbol{o}) {C}^{*}_n(\theta,\boldsymbol{o}){R}^{-1}(\boldsymbol{o}){C}_n(\theta,\boldsymbol{o})\hat{{\alpha}}_n(\theta,\boldsymbol{o},t)\mathrm{d}t \nonumber \\
&+\Sigma^{\alpha}_{n,\infty}(\theta,\boldsymbol{o}) {C}^{*}_n(\theta,\boldsymbol{o}){R}^{-1}(\boldsymbol{o})\mathrm{d}w_{\boldsymbol{o}}(t)  \nonumber 
\end{align}
Let $\{\mathcal{X}_n(\theta,\boldsymbol{o},t)\}_{t\geq 0}$ denote the process consisting of the concatenation of the set of Fourier coefficients $\alpha_n(t)$, the $n$-dimensional representation of the filter, $\hat{\alpha}_n(\theta,\boldsymbol{o},t)$, and the vectorisation of the filter derivatives, $\hat{\alpha}_{n,\theta}(\theta,\boldsymbol{o},t)$, $\hat{\alpha}_{n,\boldsymbol{o}}(\theta,\boldsymbol{o},t)$. This process takes values in $\mathbb{R}^{d}$, where $d = n^2+n^2+n^2n_{\theta}+n^2n_y$. Then it follows from equation \eqref{finite_dim_signal}, equation \eqref{A42}, and the derivatives of equation \eqref{A42}, that $\{\mathcal{X}_n(\theta,\boldsymbol{o},t)\}_{t\geq 0}$ evolves according to
\begin{equation}
\mathrm{d} \mathcal{X}_n(\theta,\boldsymbol{o},t)  = \Phi_n(\theta,\boldsymbol{o}) \mathcal{X}_n(\theta,\boldsymbol{o},t)\mathrm{d}t + \Psi_n(\theta,\boldsymbol{o})\mathrm{d}b_n(t) \label{OU_eq}
\end{equation}
where $\Phi_n(\theta,\boldsymbol{o}) \in\mathbb{R}^{d\times d}$, $\Psi_n(\theta,\boldsymbol{o},t)\in\mathbb{R}^{d\times (n+n_y)}$, and $b_n(t) = \left(v_{n,\theta}(t), w_{\boldsymbol{o}}(t)\right)^T$ is the $\mathbb{R}^{n+n_y}$-valued process consisting of the concatenation of the finite-dimensional approximation of the signal noise and the measurement noise. This process has incremental covariance matrix ${T}_n(\theta,\boldsymbol{o})\in\mathbb{R}^{(n+n_y)\times (n+n_y)}$, a block diagonal matrix with entries $\smash{\left[{T}_n(\theta,\boldsymbol{o})\right]_{1:n,1:n} = {Q}_n(\theta)}$ and $\smash{\left[{T}_n(\theta,\boldsymbol{o})\right]_{(n+1):(n+n_y),(n+1):(n+n_y)}= {R}(\boldsymbol{o})}$. For brevity, we have omitted the explicit forms of the matrices $\Phi_n(\theta,\boldsymbol{o})$ and $\Psi_n(\theta,\boldsymbol{o})$.
\end{proof}

\begin{lemma} \label{propA0a}
Suppose that Assumptions \ref{manual_assumption2i} and \ref{manual_assumption2ii} hold. Then $\Phi_n(\theta,\boldsymbol{o})$ is stable. 
\end{lemma}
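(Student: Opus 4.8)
The plan is to exhibit a block lower-triangular structure for $\Phi_n(\theta,\boldsymbol{o})$ with respect to the natural ordering $(u_n,\hat{u}_n,\hat{u}_{n,\theta},\hat{u}_{n,\boldsymbol{o}})$ of the components of $\mathcal{X}_n$ used in Lemma \ref{propA0}, read off its diagonal blocks, and then invoke standard linear filtering theory. Write $K_n(\theta,\boldsymbol{o}) := \Sigma_{n,\infty}(\theta,\boldsymbol{o}){C}^{*}_n(\theta,\boldsymbol{o}){R}^{-1}(\boldsymbol{o})$ for the stationary Kalman gain. From \eqref{signal_finite_dim_fin}, the drift of the $u_n$-block is $A_n(\theta)u_n$, depending on $u_n$ alone; from \eqref{filter_asymptotic_fin_dim}, the drift of the $\hat{u}_n$-block equals $K_n(\theta,\boldsymbol{o})C_n(\theta,\boldsymbol{o})u_n+[A_n(\theta)-K_n(\theta,\boldsymbol{o})C_n(\theta,\boldsymbol{o})]\hat{u}_n$, depending only on $u_n$ and $\hat{u}_n$.

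First I would differentiate \eqref{filter_asymptotic_fin_dim} formally with respect to each component of $\theta$ and of $\boldsymbol{o}$. Since the observation path $y$, and hence the true signal $u$, is held fixed as the filter parameters vary, one has $u_{n,\theta}=u_{n,\boldsymbol{o}}=0$; and since $\Sigma_{n,\infty}(\theta,\boldsymbol{o})$ is a deterministic matrix, differentiating it (or $A_n$, $C_n$) produces only further deterministic matrices. A short computation then shows that the drift of the $\hat{u}_{n,\theta}$-block is $[A_n(\theta)-K_n(\theta,\boldsymbol{o})C_n(\theta,\boldsymbol{o})]\hat{u}_{n,\theta}$ plus a linear combination of $u_n$ and $\hat{u}_n$ coming from the derivatives of $A_n$, $C_n$, $K_n$, and similarly for the $\hat{u}_{n,\boldsymbol{o}}$-block. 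In particular, no derivative term couples $\dot{u}_n$ to the other components, nor does the diagonal of the $\hat{u}_{n,\theta}$-block or the $\hat{u}_{n,\boldsymbol{o}}$-block involve anything other than $A_n(\theta)-K_n(\theta,\boldsymbol{o})C_n(\theta,\boldsymbol{o})$. Hence $\Phi_n(\theta,\boldsymbol{o})$ is block lower-triangular, with diagonal blocks $A_n(\theta)$ (once) and copies of $A_n(\theta)-K_n(\theta,\boldsymbol{o})C_n(\theta,\boldsymbol{o})$.

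Since the spectrum of a block-triangular matrix is the union of the spectra of its diagonal blocks, it then suffices to show that both $A_n(\theta)$ and $A_n(\theta)-K_n(\theta,\boldsymbol{o})C_n(\theta,\boldsymbol{o})$ are stable. The former is exactly Assumption \ref{manual_assumption2ii}. For the latter, Assumption \ref{manual_assumption2i} asserts that $(A_n(\theta),B_nQ_n^{\frac{1}{2}}(\theta))$ is stabilisable and $(A_n(\theta),C_n(\theta,\boldsymbol{o}))$ is detectable, so by the standard theory of the finite-dimensional Kalman--Bucy filter the unique nonnegative-definite solution $\Sigma_{n,\infty}(\theta,\boldsymbol{o})$ of the algebraic Riccati equation \eqref{fin_dim_alg_ricatti} is precisely the stabilising one, i.e.\ the one for which $A_n(\theta)-\Sigma_{n,\infty}(\theta,\boldsymbol{o})C_n^{*}(\theta,\boldsymbol{o})R^{-1}(\boldsymbol{o})C_n(\theta,\boldsymbol{o})$ is stable (e.g.\ \cite{Kwakernaak1972}). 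Combining the two facts, every eigenvalue of $\Phi_n(\theta,\boldsymbol{o})$ has strictly negative real part, which is the claim.

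The main obstacle is the bookkeeping in the middle step: one must check carefully that formally differentiating the closed-loop filter equation with respect to $\theta$ and $\boldsymbol{o}$ genuinely preserves the lower-triangular structure, and that the only matrix on the diagonal of each derivative block is the closed-loop matrix $A_n(\theta)-K_n(\theta,\boldsymbol{o})C_n(\theta,\boldsymbol{o})$, with the derivatives of $\Sigma_{n,\infty}$, $A_n$ and $C_n$ entering only as (lower-order) forcing acting through $u_n$ and $\hat{u}_n$. This is routine once one uses that $u$ does not depend on the filter parameters and that $\Sigma_{n,\infty}(\theta,\boldsymbol{o})$ is a constant matrix; everything else is standard linear systems theory.
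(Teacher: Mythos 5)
Your proof is correct and follows essentially the same route as the paper's: identify the block lower-triangular structure of $\Phi_n(\theta,\boldsymbol{o})$, read off the diagonal blocks as $A_n$ and copies of the closed-loop matrix $A_n-\Sigma_{n,\infty}C_n^{*}R^{-1}C_n$, and conclude stability from Assumption \ref{manual_assumption2ii} for the former and the standard stabilisability/detectability theory of the algebraic Riccati equation (Assumption \ref{manual_assumption2i}) for the latter. The only difference is that you spell out the differentiation of the closed-loop filter equation that justifies the triangular structure, which the paper simply asserts.
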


\begin{proof}
The matrix $\Phi_n(\theta,\boldsymbol{o})$ is block lower-triangular, and so its eigenvalues are given by the eigenvalues of its block diagonal matrices. These block diagonal matrices are given by
\begin{align}
[\Phi_n(\theta,\boldsymbol{o})]_{1:n,1:n} &= A_n(\theta_0), \\
[\Phi_n(\theta,\boldsymbol{o})]_{in+(1:n),in+(1:n)} &= {A}_n(\theta)-\Sigma^{\alpha}_{n,\infty}(\theta,\boldsymbol{o}){C}_n^{*}(\theta,\boldsymbol{o}){R}^{-1}(\boldsymbol{o}){C}_n(\theta,\boldsymbol{o}),
\end{align}
where $i=1,\dots,n_{\theta}+n_{\boldsymbol{o}}+1$. By Assumption \ref{manual_assumption2ii}, the eigenvalues of the matrix $A_n(\theta_0)$ have strictly negative real parts. Meanwhile, by Assumption \ref{manual_assumption2i}, the eigenvalues of the matrix ${A}_n(\theta)-\Sigma^{\alpha}_{n,\infty}(\theta,\boldsymbol{o}){C}_n^{*}(\theta,\boldsymbol{o}){R}^{-1}(\boldsymbol{o}){C}_n(\theta,\boldsymbol{o})$ have strictly negative real parts (e.g., \cite{Kwakernaak1972}). Thus, the eigenvalues of $\Phi_n(\theta,\boldsymbol{o})$ have strictly negative real parts. That is, $\Phi_n(\theta,\boldsymbol{o})$ is stable.
\end{proof}

 It remains to prove that Assumptions \ref{manual_assumption2i} - \ref{manual_assumption2iii} are indeed sufficient for Assumptions \ref{assumption2} - \ref{assumption5}.

\begin{proposition} \label{propA1}
Assumptions \ref{manual_assumption2i} - \ref{manual_assumption2ii} imply Assumption \ref{assumption2}.
\end{proposition}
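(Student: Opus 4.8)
The plan is to show that, under Assumptions~\ref{manual_assumption2i}--\ref{manual_assumption2ii}, the process $\{\mathcal{X}_n(\theta,\boldsymbol{o},t)\}_{t\geq 0}$ is an ergodic multivariate Ornstein--Uhlenbeck process with a unique invariant Gaussian measure, which is exactly the content of Assumption~\ref{assumption2}. The two preliminary lemmas (Lemma~\ref{propA0} and Lemma~\ref{propA0a}) have already done most of the structural work: Lemma~\ref{propA0} establishes that, upon initialising the Kalman--Bucy filter at the stationary covariance $\Sigma_{n,\infty}(\theta,\boldsymbol{o})$ (which exists under the stabilisability/detectability Assumption~\ref{manual_assumption2i}), the concatenated process solves the linear SDE \eqref{OU_eq} with constant drift matrix $\Phi_n(\theta,\boldsymbol{o})$ and constant diffusion matrix $\Psi_n(\theta,\boldsymbol{o})$, i.e.\ it is an OU process; and Lemma~\ref{propA0a} establishes, using Assumption~\ref{manual_assumption2ii} together with Assumption~\ref{manual_assumption2i}, that $\Phi_n(\theta,\boldsymbol{o})$ is a stable (Hurwitz) matrix.

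From there, I would invoke the standard theory of linear SDEs driven by Brownian motion: for a linear SDE $\mathrm{d}\mathcal{X}_n = \Phi_n\mathcal{X}_n\,\mathrm{d}t + \Psi_n\,\mathrm{d}b_n$ with $\Phi_n$ Hurwitz, the transition law at time $t$ is Gaussian with mean $e^{\Phi_n t}x_0$ and covariance $\int_0^t e^{\Phi_n s}\Psi_n T_n \Psi_n^{*} e^{\Phi_n^{*} s}\,\mathrm{d}s$; since $\Phi_n$ is stable, the mean decays to zero and the covariance integral converges as $t\to\infty$ to the solution $\Sigma_n^{\mathcal{X}}(\theta,\boldsymbol{o})$ of the Lyapunov equation $\Phi_n \Sigma_n^{\mathcal{X}} + \Sigma_n^{\mathcal{X}}\Phi_n^{*} + \Psi_n T_n \Psi_n^{*} = 0$. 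This yields a Gaussian invariant measure $\mu_{n,\theta,\boldsymbol{o}} = \mathcal{N}(0,\Sigma_n^{\mathcal{X}}(\theta,\boldsymbol{o}))$, and the exponential convergence $\|e^{\Phi_n t}\|\leq \beta e^{-\alpha t}$ gives geometric ergodicity (in total variation, and in Wasserstein distance), in particular uniqueness of the invariant law. I would cite a standard reference for linear SDEs / OU processes (e.g.\ \cite{Kwakernaak1972} for the Lyapunov-equation characterisation) for these facts rather than re-deriving them.

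One technical point worth a sentence: Assumption~\ref{assumption2} concerns the process initialised with the \emph{differential} Riccati flow $\Sigma_n(\theta,\boldsymbol{o},t)$, not the algebraic solution $\Sigma_{n,\infty}(\theta,\boldsymbol{o})$, so strictly speaking the full process is not OU from time $0$ but is asymptotically OU; since $\Sigma_n(\theta,\boldsymbol{o},t)\to\Sigma_{n,\infty}(\theta,\boldsymbol{o})$ (Lemma~\ref{propA0}, under Assumption~\ref{manual_assumption2i}), the time-inhomogeneous drift converges to the constant Hurwitz matrix $\Phi_n(\theta,\boldsymbol{o})$, and standard asymptotic-stability arguments for time-inhomogeneous linear systems with exponentially convergent coefficients show that the long-time behaviour, and hence the (unique) invariant measure, coincides with that of the limiting OU process. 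I would note that one may simply initialise the filter at the stationary covariance, as in Lemma~\ref{propA0}, without loss of generality for the purpose of identifying the invariant law.

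The main obstacle is essentially bookkeeping rather than conceptual: one must be careful that the ``filter derivative'' components $\hat u_{n,\theta}$, $\hat u_{n,\boldsymbol{o}}$ are genuinely part of a \emph{closed} linear system (so that \eqref{OU_eq} really holds with constant coefficients), which requires that differentiating \eqref{filter_asymptotic_fin_dim} in $\theta$ and $\boldsymbol{o}$ produces affine equations in $(\mathcal{X}_n, \text{noise})$ with coefficients depending only on $(\theta,\boldsymbol{o})$ — this is where the block lower-triangular structure exploited in Lemma~\ref{propA0a} comes from, and it hinges on $\Sigma_{n,\infty}$ and its $\theta,\boldsymbol{o}$-derivatives being well-defined smooth functions of $(\theta,\boldsymbol{o})$, which follows from the implicit function theorem applied to the algebraic Riccati equation under Assumption~\ref{manual_assumption2i}. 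Once the closedness and the Hurwitz property of $\Phi_n$ are in hand, ergodicity with the Gaussian invariant measure is immediate from the classical OU theory, completing the proof of the proposition.
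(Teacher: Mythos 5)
Your proposal is correct and follows essentially the same route as the paper: invoke Lemma \ref{propA0} for the OU structure, Lemma \ref{propA0a} for the stability of $\Phi_n(\theta,\boldsymbol{o})$, and then standard linear-SDE/OU theory (the paper cites \cite[Theorem 6.7]{Karatzas1998}) to conclude ergodicity with a unique Gaussian invariant measure. The additional remarks you make about the Lyapunov equation, the stationary initialisation of the Riccati flow, and the closedness of the augmented linear system are consistent with, and slightly more careful than, the paper's own three-line argument.
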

\begin{proof}
By Lemma \ref{propA0}, $\{\mathcal{X}_n(\theta,\boldsymbol{o},t)\}_{t\geq 0}$ is a multivariate OU process which evolves according to equation \eqref{OU_eq}. By Lemma \ref{propA0a}, the matrix $\Phi_n(\theta,\boldsymbol{o})$ is stable. It follows, using standard results, that $\{\mathcal{X}_n(\theta,\boldsymbol{o},t)\}_{t\geq 0}$ is ergodic (e.g., \cite[Theorem 6.7]{Karatzas1998}). 
\end{proof}

\begin{proposition} \label{propA2}
Assumptions \ref{manual_assumption2i} - \ref{manual_assumption2ii} 
imply Assumption \ref{assumption3}.
\end{proposition}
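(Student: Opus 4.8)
The plan is to leverage Lemmas~\ref{propA0} and~\ref{propA0a}: under Assumptions~\ref{manual_assumption2i}--\ref{manual_assumption2ii}, the process $\{\mathcal{X}_n(\theta,\boldsymbol{o},t)\}_{t\geq 0}$ is the multivariate Ornstein--Uhlenbeck process \eqref{OU_eq} driven by a \emph{stable} matrix $\Phi_n(\theta,\boldsymbol{o})$, so that (as already recorded in Proposition~\ref{propA1}, via \cite{Karatzas1998}) its unique invariant law is the centred Gaussian $\mu_{n,\theta,\boldsymbol{o}} = \mathcal{N}(0,S_{n,\infty}(\theta,\boldsymbol{o}))$, where $S_{n,\infty}(\theta,\boldsymbol{o})$ is the unique solution of the Lyapunov equation $\Phi_n S_{n,\infty} + S_{n,\infty}\Phi_n^{*} + \Psi_n T_n \Psi_n^{*} = 0$, equivalently $S_{n,\infty}(\theta,\boldsymbol{o}) = \int_0^{\infty} e^{\Phi_n(\theta,\boldsymbol{o}) s}\,\Psi_n T_n \Psi_n^{*}\,e^{\Phi_n^{*}(\theta,\boldsymbol{o}) s}\,\mathrm{d}s$. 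With this identification, Assumption~\ref{assumption3} reduces to quantitative statements about finite-dimensional Gaussians whose covariances depend smoothly on $(\theta,\boldsymbol{o})$, and the proof splits into a moment bound for $\mu_{n,\theta,\boldsymbol{o}}$ itself and moment bounds for its parametric derivatives.

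For the first bound I would use that a finite-dimensional Gaussian has finite moments of all orders: by equivalence of Gaussian moments there is $c_{q,d}>0$ with $\int_{\mathbb{R}^{d}}(1+\|x\|^{q})\,\mu_{n,\theta,\boldsymbol{o}}(\mathrm{d}x)\leq c_{q,d}\big(1+(\mathrm{Tr}\,S_{n,\infty}(\theta,\boldsymbol{o}))^{q/2}\big)$, and $\mathrm{Tr}\,S_{n,\infty}(\theta,\boldsymbol{o}) = \int_0^{\infty}\mathrm{Tr}\big(e^{\Phi_n s}\Psi_n T_n\Psi_n^{*}e^{\Phi_n^{*}s}\big)\,\mathrm{d}s \leq d\,\|\Psi_n T_n\Psi_n^{*}\|\int_0^{\infty}\|e^{\Phi_n s}\|^{2}\,\mathrm{d}s < \infty$ because $\Phi_n(\theta,\boldsymbol{o})$ is stable. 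This gives the first inequality of Assumption~\ref{assumption3}; a bound uniform in $(\theta,\boldsymbol{o})$ — the reading suggested by the notation $K_q$ — follows since $(\theta,\boldsymbol{o})\mapsto(\Phi_n,\Psi_n,T_n)$ is continuous (a consequence of continuity of $A_n(\cdot)$, $Q_n(\cdot)$, $C_n(\cdot,\cdot)$, $R(\cdot)$ and of the solution map of the algebraic Riccati equation~\eqref{fin_dim_alg_ricatti}), so $\mathrm{Tr}\,S_{n,\infty}$ is bounded on the compact set $\Pi^{n_y}$ and on any compact subset of $\Theta$.

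For the derivative bounds I would first argue that $(\theta,\boldsymbol{o})\mapsto S_{n,\infty}(\theta,\boldsymbol{o})$ is twice continuously differentiable: differentiating the Lyapunov equation in $\theta_i$ (resp.\ $\boldsymbol{o}_i$) yields another Lyapunov equation with the \emph{same} stable matrix $\Phi_n$ and forcing built from $\partial_{\theta_i}\Phi_n$ and $\partial_{\theta_i}(\Psi_n T_n\Psi_n^{*})$, hence a unique bounded solution, and iterating gives the second derivatives. Provided $S_{n,\infty}(\theta,\boldsymbol{o})$ is nondegenerate, $\mu_{n,\theta,\boldsymbol{o}}$ has the smooth density $p(x;\theta,\boldsymbol{o}) = (2\pi)^{-d/2}(\det S_{n,\infty})^{-1/2}\exp(-\tfrac12 x^{T}S_{n,\infty}^{-1}x)$, and $\partial_{\theta_i}p(x;\theta,\boldsymbol{o}) = r_i(x;\theta,\boldsymbol{o})\,p(x;\theta,\boldsymbol{o})$ with $r_i$ a polynomial of degree at most two in $x$ whose coefficients are smooth functions of $S_{n,\infty}$, $S_{n,\infty}^{-1}$ and $\partial_{\theta_i}S_{n,\infty}$. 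Hence $|\nu^{(\theta)}_{n,\theta,\boldsymbol{o},i}|(\mathrm{d}x) = |r_i(x;\theta,\boldsymbol{o})|\,p(x;\theta,\boldsymbol{o})\,\mathrm{d}x$ and
\[
\int_{\mathbb{R}^{d}}(1+\|x\|^{q})\,|\nu^{(\theta)}_{n,\theta,\boldsymbol{o},i}|(\mathrm{d}x) = \int_{\mathbb{R}^{d}}(1+\|x\|^{q})\,|r_i(x;\theta,\boldsymbol{o})|\,p(x;\theta,\boldsymbol{o})\,\mathrm{d}x < \infty ,
\]
since $(1+\|x\|^{q})|r_i(x)|$ has polynomial growth and Gaussian moments are finite; uniformity in $(\theta,\boldsymbol{o})$ follows from continuity of $S_{n,\infty}$, $S_{n,\infty}^{-1}$, $\partial_{\theta_i}S_{n,\infty}$ together with the previous step. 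The identical computation with $\partial_{\boldsymbol{o}_i}$ in place of $\partial_{\theta_i}$ gives the bound on $|\nu^{(\boldsymbol{o})}_{n,\theta,\boldsymbol{o},i}|$.

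I expect the main obstacle to be the bookkeeping around smoothness and nondegeneracy rather than any single estimate. One must justify carefully that the Riccati solution $\Sigma_{n,\infty}(\theta,\boldsymbol{o})$ — and hence $\Phi_n$, $\Psi_n$ and $S_{n,\infty}$ — is $C^2$ in $(\theta,\boldsymbol{o})$, which uses invertibility of the Riccati linearisation guaranteed by Assumption~\ref{manual_assumption2i}; and one must ensure that $\mu_{n,\theta,\boldsymbol{o}}$ is nondegenerate, so that its parametric derivative is a genuine finite signed measure with polynomially bounded mass. This is precisely the point at which the companion controllability condition (Assumption~\ref{manual_assumption2iii}) enters, and it is in any case directly verifiable for the spatially invariant model of Section~\ref{sec:numerics}, where $A_n(\theta)$ is diagonal with eigenvalues having strictly negative real parts [see~\eqref{eigenvalues}].
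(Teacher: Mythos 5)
Your proposal is correct and follows essentially the same route as the paper: identify the invariant measure as the centred Gaussian whose covariance solves the Lyapunov equation associated with the stable matrix $\Phi_n(\theta,\boldsymbol{o})$, bound its moments, and then bound the parametric derivatives of the measure. You are in fact more careful than the paper at the derivative step --- the paper simply \emph{assumes} that $\Phi_n$, $\Psi_n$, $T_n$ have bounded partial derivatives and asserts the bounds, whereas you differentiate the Lyapunov equation and the Gaussian density explicitly and correctly flag that nondegeneracy of the invariant covariance (i.e.\ Assumption~\ref{manual_assumption2iii}) is implicitly needed for that density argument, a point the paper's proof glosses over.
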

\begin{proof}
By Proposition \ref{propA1}, the process $\{\mathcal{X}_n(\theta,\boldsymbol{o},t)\}_{t\geq 0}$ is ergodic. The unique invariant probability measure of this process, $\mu_{n,\theta,\boldsymbol{o}}$, is Gaussian with zero mean and covariance matrix $K_{n,\infty}(\theta,\boldsymbol{o})$, given by the solution of (e.g., \cite[Theorem 6.7]{Karatzas1998})
\begin{align}
0&= \Phi_n(\theta,\boldsymbol{o}) K_{n,\infty}(\theta,\boldsymbol{o}) + K_{n,\infty}(\theta,\boldsymbol{o})\Phi_n^{*}(\theta,\boldsymbol{o})+\Psi_n(\theta,\boldsymbol{o}){T}_n(\theta,\boldsymbol{o})\Psi_n^{*}(\theta,\boldsymbol{o}). \label{asymptotic_cov}
\end{align}  It follows, using the fact that the moments of a multivariate normal distribution are bounded, that for all $q>0$, there exists a constant $K_q>0$ such that 
\begin{equation}
\int_{\mathbb{R}^{d}} (1+||x||^{q})\mu_{n,\theta,\boldsymbol{o}}(\mathrm{d}x) \leq K_q.
\end{equation}
 It also follows, assuming that the matrices $\Phi_n(\theta,\boldsymbol{o})$, $\Psi_n(\theta,\boldsymbol{o})$, $T_n(\theta,\boldsymbol{o})$ have bounded partial derivatives with respect to $\theta,\boldsymbol{o}$, that for all $q>0$, there exist constants $K^{\theta}_{q},K_{q}^{\boldsymbol{o}}>0$, such that 
\begin{align}
\int_{\mathbb{R}^{d}} (1+||x||^{q})|\nu^{(\theta)}_{n,\theta,\boldsymbol{o},i}(\mathrm{d}x)|\leq K^{\theta}_q~,~\int_{\mathbb{R}^{d}} (1+||x||^{q})|\nu^{(\boldsymbol{o})}_{n,\theta,\boldsymbol{o},i}(\mathrm{d}x)|\leq K^{\boldsymbol{o}}_q.
\end{align}
\end{proof}

\begin{proposition}
Assumption \ref{assumption8} holds.
\end{proposition}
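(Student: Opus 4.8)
The plan is to exploit the linear structure already established: by Lemma~\ref{propA0}, the process $\{\mathcal{X}_n(\theta,\boldsymbol{o},t)\}_{t\geq 0}$ is a multivariate Ornstein--Uhlenbeck process solving the linear SDE~\eqref{OU_eq}, so its diffusion coefficient is the matrix $\Psi_n(\theta,\boldsymbol{o})$ (equivalently $\Psi_n(\theta,\boldsymbol{o}){T}_n^{1/2}(\theta,\boldsymbol{o})$, after rewriting $b_n$ in terms of a standard Brownian motion), which is constant in the state variable. Hence, for each pair of indices $(i,j)$, the map $(\theta,\boldsymbol{o},x)\mapsto[\Psi_n(\theta,\boldsymbol{o})]_{ij}$ does not depend on $x$, and the PGP reduces to the statement that $M_{ij}:=\sup_{\theta\in\Theta,\,\boldsymbol{o}\in\Pi^{n_y}}|[\Psi_n(\theta,\boldsymbol{o})]_{ij}|<\infty$; indeed, the PGP then holds with $q=1$ and $K=1+\max_{i,j}M_{ij}$, since $|[\Psi_n(\theta,\boldsymbol{o})]_{ij}|\leq K\leq K(1+\|x\|)$ for all $(\theta,\boldsymbol{o},x)$. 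Thus the entire content of the proof is the uniform bound $M_{ij}<\infty$.

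To obtain it, I would first record, from the proof of Lemma~\ref{propA0} and from differentiating the asymptotic filter equation~\eqref{filter_asymptotic_fin_dim} with respect to $\theta$ and $\boldsymbol{o}$, that the blocks of $\Psi_n(\theta,\boldsymbol{o})$ are assembled from the constant matrix $B_n$, from the gain matrix $G_n(\theta,\boldsymbol{o}):=\Sigma_{n,\infty}(\theta,\boldsymbol{o}){C}_n^{*}(\theta,\boldsymbol{o})\mathcal{R}^{-1}(\boldsymbol{o})$, and from the first partial derivatives of $G_n$ with respect to $\theta$ and $\boldsymbol{o}$ (these enter through the diffusion terms of the SDEs for the filter derivatives $\hat u_{n,\theta}$ and $\hat u_{n,\boldsymbol{o}}$). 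It therefore suffices to bound $B_n$, $G_n$, and the first partials of $G_n$ uniformly over $\Theta\times\Pi^{n_y}$, which in turn reduces to bounding $A_n(\theta)$, $C_n(\theta,\boldsymbol{o})$, $\mathcal{R}^{-1}(\boldsymbol{o})$, $B_nQ_n(\theta)B_n^{*}$, $\Sigma_{n,\infty}(\theta,\boldsymbol{o})$, and their first partials. The matrices $A_n$, $C_n$, $Q_n$ are continuously differentiable in $(\theta,\boldsymbol{o})$ by the smoothness assumptions on the model coefficients in Section~\ref{sec:equation} (in the spectral parametrisation of Section~\ref{sec:numerics} these dependencies are fully explicit, cf.\ \eqref{eigenvalues}), and $\mathcal{R}(\boldsymbol{o})$ is positive definite, so $\mathcal{R}^{-1}(\boldsymbol{o})$ is well defined and equally smooth. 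Under Assumption~\ref{manual_assumption2i}, the algebraic Riccati equation~\eqref{fin_dim_alg_ricatti} has a unique stabilising solution $\Sigma_{n,\infty}(\theta,\boldsymbol{o})$, which depends smoothly on $(\theta,\boldsymbol{o})$ by the standard perturbation theory for algebraic Riccati equations; differentiating~\eqref{fin_dim_alg_ricatti} shows that $\partial_{\theta}\Sigma_{n,\infty}$ and $\partial_{\boldsymbol{o}}\Sigma_{n,\infty}$ solve Lyapunov equations whose coefficient is the closed-loop matrix $A_n(\theta)-\Sigma_{n,\infty}(\theta,\boldsymbol{o}){C}_n^{*}(\theta,\boldsymbol{o})\mathcal{R}^{-1}(\boldsymbol{o}){C}_n(\theta,\boldsymbol{o})$, which is stable under Assumption~\ref{manual_assumption2i} (cf.\ Lemma~\ref{propA0a}); hence these derivatives are continuous as well. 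Finally, since $\Pi^{n_y}$ is compact and $\Theta$ is bounded, continuity of all these quantities on the compact set $\bar{\Theta}\times\Pi^{n_y}$ yields the required uniform bounds, and the proof is complete.

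The main obstacle here is a question of hypotheses rather than of analysis: one must make sure that ``bounded'' is genuinely bounded, which requires $\Theta$ to be bounded (consistent with the projected dynamics in~\eqref{RML_ROSP_1}) and, more substantively, the well-posedness and regularity of $\Sigma_{n,\infty}(\theta,\boldsymbol{o})$ and its first derivatives --- which is exactly what Assumption~\ref{manual_assumption2i} (stabilisability and detectability) and the stability of $\Phi_n$ from Lemma~\ref{propA0a} (via Assumption~\ref{manual_assumption2ii}) supply. Once these are granted, the result is immediate from the Ornstein--Uhlenbeck structure, which removes all $x$-dependence from the diffusion coefficient, so that no growth in $x$ needs to be controlled at all.
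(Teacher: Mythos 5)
Your proposal is correct and rests on the same key observation as the paper's proof: by Lemma~\ref{propA0} the process is a multivariate Ornstein--Uhlenbeck process, so its diffusion coefficient is independent of the state $x$ and the PGP holds trivially (the paper's proof states this in one line, writing $\Phi_n$ where it evidently means the diffusion coefficient $\Psi_n$). The only difference is that you additionally justify the uniform bound over $(\theta,\boldsymbol{o})$ --- via smooth dependence of the Riccati solution on the parameters and compactness of $\bar{\Theta}\times\Pi^{n_y}$ --- a point the paper treats as immediate; this is a reasonable bit of extra care but not a different argument.
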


\begin{proof}
The matrix $\Phi_n(\theta,\boldsymbol{o})$ is independent of $\mathcal{X}_n(\theta,\boldsymbol{o})$. Thus, trivially, it satisfies the PGP (component-wise). 
\end{proof}

\begin{proposition}
Assumptions \ref{manual_assumption2i} - \ref{manual_assumption2iii} imply Assumption \ref{assumption4a}.
\end{proposition}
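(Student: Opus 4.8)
The plan is to exploit the fact, established in Lemmas~\ref{propA0} and~\ref{propA0a}, that under Assumptions~\ref{manual_assumption2i}--\ref{manual_assumption2ii} the process $\{\mathcal{X}_n(\theta,\boldsymbol{o},t)\}_{t\geq 0}$ is a stable multivariate Ornstein--Uhlenbeck process solving \eqref{OU_eq}, so that its generator is the second-order elliptic operator $\mathcal{A}_{\mathcal{X}_n}f(x) = (\Phi_n(\theta,\boldsymbol{o})x)^{T}\nabla f(x) + \tfrac{1}{2}\mathrm{Tr}[\Psi_n(\theta,\boldsymbol{o})T_n(\theta,\boldsymbol{o})\Psi_n^{*}(\theta,\boldsymbol{o})\nabla^2 f(x)]$, and its transition kernel is explicitly Gaussian, with mean $e^{\Phi_n(\theta,\boldsymbol{o})t}x$ and covariance $\Sigma_{n,t}(\theta,\boldsymbol{o}) = \int_0^t e^{\Phi_n(\theta,\boldsymbol{o})s}\Psi_n(\theta,\boldsymbol{o})T_n(\theta,\boldsymbol{o})\Psi_n^{*}(\theta,\boldsymbol{o})e^{\Phi_n^{*}(\theta,\boldsymbol{o})s}\mathrm{d}s$, which increases to the stationary covariance $K_{n,\infty}(\theta,\boldsymbol{o})$ of \eqref{asymptotic_cov}. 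This places us squarely within the standard theory of the Poisson equation for ergodic diffusions satisfying a Lyapunov/recurrence condition (here $\mathcal{A}_{\mathcal{X}_n}\|x\|^2 \leq -c\|x\|^2 + b$, which is immediate from the stability of $\Phi_n$) together with a non-degeneracy condition, such as the Pardoux--Veretennikov theory invoked in \cite{Sharrock2020a}; the bulk of the work is to verify the hypotheses of that theory in the present linear Gaussian setting and to track the regularity in $(\theta,\boldsymbol{o})$.

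For existence and basic regularity I would take the probabilistic candidate $v(\theta,\boldsymbol{o},x) = -\int_0^{\infty} \mathbb{E}^{x}[H(\theta,\boldsymbol{o},\mathcal{X}_n(\theta,\boldsymbol{o},t))]\mathrm{d}t$, which is well defined for centred $H \in \mathbb{H}^{d}_{c}$: stability of $\Phi_n(\theta,\boldsymbol{o})$ gives $\|e^{\Phi_n(\theta,\boldsymbol{o})t}\| \leq C e^{-\lambda t}$, so the conditional law of $\mathcal{X}_n(\theta,\boldsymbol{o},t)$ converges to $\mu_{n,\theta,\boldsymbol{o}}$ exponentially fast, and combining this with Gaussian moment bounds and the polynomial growth of $H$ shows the integrand decays exponentially in $t$ and that $v$ itself has polynomial growth. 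That $v$ solves $\mathcal{A}_{\mathcal{X}_n}v = H$ follows from the semigroup identity $\tfrac{\mathrm{d}}{\mathrm{d}t}(P_t H) = \mathcal{A}_{\mathcal{X}_n}(P_t H)$ on integrating from $0$ to $\infty$ and using $P_tH \to 0$; Assumption~\ref{manual_assumption2iii} enters crucially here, since controllability of $(\Phi_n(\theta,\boldsymbol{o}),\Psi_n(\theta,\boldsymbol{o}))$ (recalling that $T_n$ is invertible) makes $\Sigma_{n,t}(\theta,\boldsymbol{o})$ positive definite for every $t>0$, so that $\mathcal{A}_{\mathcal{X}_n}$ is locally uniformly elliptic and the transition densities are smooth and non-degenerate, and elliptic (Schauder) regularity upgrades the merely continuous $H$ to a solution $v(\theta,\boldsymbol{o},\cdot) \in C^{2}(\mathbb{R}^{d})$. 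For uniqueness in the relevant class, any $w$ of polynomial growth with $\mathcal{A}_{\mathcal{X}_n}w = 0$ satisfies $P_t w = w$, and letting $t\to\infty$ and using ergodicity (Proposition~\ref{propA1}) forces $w$ to be the constant $\int w\,\mathrm{d}\mu_{n,\theta,\boldsymbol{o}}$. For the specific functions of Assumption~\ref{assumption6} --- polynomials of degree at most two in the components of $\mathcal{X}_n$ with smooth $(\theta,\boldsymbol{o})$-dependent coefficients --- all of this is explicit: $v$ is the polynomial of the same degree obtained by solving the associated Sylvester/Lyapunov systems built from $\Phi_n(\theta,\boldsymbol{o})$, whose solvability is again a consequence of the stability of $\Phi_n(\theta,\boldsymbol{o})$.

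Finally, for $H \in \bar{\mathbb{H}}^{d}$ I would differentiate $v(\theta,\boldsymbol{o},x) = -\int_0^{\infty}\mathbb{E}^{x}[H(\theta,\boldsymbol{o},\mathcal{X}_n(\theta,\boldsymbol{o},t))]\mathrm{d}t$ under the integral in $\theta$, in $\boldsymbol{o}$, and in $x$: the $x$-derivative produces, by the chain rule, an extra factor $e^{\Phi_n(\theta,\boldsymbol{o})t}$, which only improves the decay, while the $\theta$- and $\boldsymbol{o}$-derivatives act on both $H$ --- whose first and second $(\theta,\boldsymbol{o})$-derivatives have the PGP by assumption --- and the coefficients $\Phi_n(\theta,\boldsymbol{o})$, $\Psi_n(\theta,\boldsymbol{o})$, $T_n(\theta,\boldsymbol{o})$, which have bounded $(\theta,\boldsymbol{o})$-derivatives (as already used in the proof of Proposition~\ref{propA2}); a variation-of-constants estimate $\|\partial_{\theta}e^{\Phi_n(\theta,\boldsymbol{o})t}\| \leq C(1+t)e^{-\lambda' t}$ keeps every resulting integrand exponentially integrable in $t$, and Gaussian moment bounds then deliver the PGP for $v$, for its first and second derivatives in $(\theta,\boldsymbol{o})$, and for the mixed derivatives $\nabla_x\nabla_{\theta}v$ and $\nabla_x\nabla_{\boldsymbol{o}}v$, with H\"older continuity of the $(\theta,\boldsymbol{o})$-derivatives inherited from that of $H$ and the smooth parameter-dependence of the Gaussian kernel. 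The main obstacle I anticipate is the behaviour near $t = 0$: for $H$ only continuous in $x$ the smoothing bounds $\|\nabla_x^2 P_t H\|_{\infty} \leq C t^{-m}\|H\|_{\infty}$ are not integrable at $0$, so one cannot simply pass two $x$-derivatives through the time integral, and establishing $v(\theta,\boldsymbol{o},\cdot) \in C^2$ must go through the elliptic-regularity route above (using the H\"older regularity built into $\mathbb{H}^{d}$ and the non-degeneracy from Assumption~\ref{manual_assumption2iii}), or, for the polynomial integrands of Assumption~\ref{assumption6}, through the explicit Sylvester-equation solution.
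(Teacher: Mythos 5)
Your proposal is correct and follows essentially the same route as the paper: the paper's proof simply checks that the stability of $\Phi_n(\theta,\boldsymbol{o})$ (Lemmas \ref{propA0} and \ref{propA0a}) yields the recurrence/drift condition ($A_b$) and condition ($A_T$) of \cite{Pardoux2005}, that the controllability in Assumption \ref{manual_assumption2iii} yields the non-degeneracy condition ($D_{sl}$), and then concludes by citing \cite[Theorem 1]{Pardoux2005} --- precisely the three ingredients you identify. The only difference is that you go on to unpack the internals of that cited theorem (the candidate $v=-\int_0^{\infty}P_tH\,\mathrm{d}t$, the semigroup identity, uniqueness via ergodicity, and differentiation in $(\theta,\boldsymbol{o})$), which the paper delegates entirely to the reference.
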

\begin{proof}
By Lemma \ref{propA0a}, the matrix $\Phi_n(\theta,\boldsymbol{o})$ satisfies Condition ($A_b$) in \cite{Pardoux2005}.
By Lemmas \ref{propA0} and \ref{propA0a}, the process $\{\mathcal{X}_n(\theta,\boldsymbol{o},t)\}_{t\geq 0}$ satisfies Condition ($A_T$) in \cite{Pardoux2005}. 
By Assumption \ref{manual_assumption2iii}, the transition density of $\{\mathcal{X}_n(\theta,\boldsymbol{o},t)\}_{t\geq0}$ satisfies Condition ($D_{sl}$) in \cite{Pardoux2005}. The result follows from \cite[Theorem 1]{Pardoux2005}.
\end{proof}

\begin{proposition}
Assumption \ref{assumption6} holds. 
\end{proposition}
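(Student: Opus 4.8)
The plan is to observe that, for each fixed $n$, every one of the six functions in Assumption~\ref{assumption6} is a polynomial of degree at most two in the components of $\mathcal{X}_n$, with coefficients assembled from operators ($C_n(\theta,\boldsymbol{o})$, $M_n$, and, in the reduced representation of Lemma~\ref{propA0}, the stationary covariance $\Sigma_{n,\infty}(\theta,\boldsymbol{o})$ and its derivatives) that are uniformly bounded over $\Theta\times\Pi^{n_y}$. The polynomial growth property will then drop out of submultiplicativity of the operator norm together with the elementary bound $\|x\|\,\|y\|\leq\|\mathcal{X}_n\|^2$ whenever $x,y$ are subvectors of $\mathcal{X}_n$.

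First I would record the uniform operator bounds. Since $\Pi^{n_y}$ is compact and $\Theta$ may be taken to be relatively compact (the projected dynamics confine the iterates to $\bar\Theta$), and $(\theta,\boldsymbol{o})\mapsto C_n(\theta,\boldsymbol{o})$ is continuous — for the advection--diffusion model this holds because the weighting kernels $\mathcal{K}_{\boldsymbol{o}_i}$ and the bias terms depend continuously on $\boldsymbol{o}$ — there is a constant $M_C<\infty$ with $||C_n(\theta,\boldsymbol{o})||\leq M_C$ for all $(\theta,\boldsymbol{o})$, and likewise $||M_n||_{\mathrm{HS}}\leq M_M$. For the covariance, recall from the proof of Lemma~\ref{propA0} that, with the Kalman--Bucy filter initialised at the stationary covariance, $\Sigma_n(\theta,\boldsymbol{o},t)\equiv\Sigma_{n,\infty}(\theta,\boldsymbol{o})$; under Assumption~\ref{manual_assumption2i} the solution of the algebraic Riccati equation \eqref{fin_dim_alg_ricatti} is unique and depends continuously on $(\theta,\boldsymbol{o})$, so $||\Sigma_{n,\infty}(\theta,\boldsymbol{o})||\leq M_\Sigma$ and, analogously, its first derivatives $\Sigma^{\theta}_{n,\infty},\Sigma^{\boldsymbol{o}}_{n,\infty}$ are uniformly bounded. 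Finally, $||u_n(t)||$, $||\hat{u}_n(\theta,\boldsymbol{o},t)||$, $||\hat{u}^{\theta}_n(\theta,\boldsymbol{o},t)||$ and $||\hat{u}^{\boldsymbol{o}}_n(\theta,\boldsymbol{o},t)||$ are each $\leq||\mathcal{X}_n(\theta,\boldsymbol{o},t)||$, being subvectors of $\mathcal{X}_n$.

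Second I would check the bounds term by term. For $\varphi_n=C_n\hat{u}_n$ and $\eta_n=C_n\hat{u}^{\theta}_n$, submultiplicativity gives $||\varphi_n||,||\eta_n||\leq M_C||\mathcal{X}_n||$, so each component satisfies the PGP with $q=1$ and $K=M_C$. For $\zeta_n$ and $\psi_n$, which are inner products of two factors each of the form $C_n(\cdot)$ applied to a subvector of $\mathcal{X}_n$, Cauchy--Schwarz and submultiplicativity yield bounds of the form $c\,M_C^2||\mathcal{X}_n||^2$ with $c$ an absolute constant ($c=\tfrac32$ for $\zeta_n$, $c=2$ for $\psi_n$), so the PGP holds with $q=2$. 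For $\iota_n=\mathrm{Tr}[M_n\Sigma_n]$ and $\phi_n=(\mathrm{Tr}^{\boldsymbol{o}}[M_n\Sigma_n])^T$, using $|\mathrm{Tr}[M_nS]|\leq||M_n||_{\mathrm{HS}}||S||_{\mathrm{HS}}$ with the uniform bounds on $\Sigma_{n,\infty}$ and $\Sigma^{\boldsymbol{o}}_{n,\infty}$ shows these are bounded by a constant uniformly in $(\theta,\boldsymbol{o})$, so the PGP holds trivially (say with $q=1$); if instead one retains $\Sigma_n$ as a component of $\mathcal{X}_n$, then $|\iota_n|,||\phi_n||\leq M_M\,c'\,||\mathcal{X}_n||$ and the same conclusion follows. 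In every case the bound has the form $K(1+||\mathcal{X}_n||^q)$ with $K$ and $q$ independent of $(\theta,\boldsymbol{o})$, which is precisely the claimed property, and the proposition follows.

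The argument is essentially routine once the uniform operator bounds are available, so the only point requiring genuine care — and the place where compactness of $\Theta$ and $\Pi^{n_y}$ and continuity of solutions of the algebraic Riccati equation are actually used — is establishing that $||C_n(\theta,\boldsymbol{o})||$ and $||\Sigma_{n,\infty}(\theta,\boldsymbol{o})||$ (and the relevant derivatives) are bounded \emph{uniformly} over $\Theta\times\Pi^{n_y}$; everything after that is submultiplicativity and an elementary inequality.
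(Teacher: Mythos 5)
Your proof is correct and follows essentially the same route as the paper's: a term-by-term verification of the polynomial growth property using submultiplicativity, Cauchy--Schwarz, the trace/Hilbert--Schmidt inequality, and the fact that $u_n$, $\hat{u}_n$, the filter derivatives, and (vectorised) $\Sigma_n$ are subvectors of $\mathcal{X}_n$. If anything, you are more explicit than the paper about why the operator bounds on $C_n(\theta,\boldsymbol{o})$, $M_n$, and $\Sigma_{n,\infty}(\theta,\boldsymbol{o})$ hold \emph{uniformly} over $\Theta\times\Pi^{n_y}$ (compactness plus continuity), a point the paper absorbs into its result (\textbf{R6}) without comment.
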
  
\begin{proof}
In this proof, we will make repeated use of the following standard results (e.g., \cite[Chapter 1]{Isaacson1994}): \\[-3mm]
\begin{itemize}
\item[(\textbf{R1})] Let $x\in\mathbb{R}^{d}$, and let $x^{(i)}$ denote the $i^{\text{th}}$ component of $x$. Then $|x^{(i)}|\leq ||x||$. \\[-3mm]
\item[(\textbf{R2})] Let $x_1\in\mathbb{R}^{d_1}$, $x_2\in\mathbb{R}^{d_2}$, and $x=(x_1,x_2)^T\in\mathbb{R}^{d_1+d_2}$. Then $||x_1||\leq ||x||$. \\[-3mm]
\item[(\textbf{R3})] Let $x_1,x_2\in\mathbb{R}^{d}$. Then $|x_1^Tx_2|\leq ||x_1||\hspace{.5mm}||x_2||$. \\[-3mm]
\item[(\textbf{R4})] Let $A\in\mathbb{R}^{d_1\times d_2}$, let $A^{(i,j)}$ denote the $(i,j)^{\text{th}}$ entry of $A$, and let $A^{(\cdot,j)}$ denote the $j^{\text{th}}$ column of $A$. Then $|A^{(i,j)}|\leq ||A^{(\cdot,j)}||$. \\[-3mm]
\item[(\textbf{R5})] Let $A\in\mathbb{R}^{d_1\times d_2}$ and $x\in\mathbb{R}^{d_2}$. Then $||Ax||\leq ||A||\hspace{.5mm}||x||$. \\[-3mm]
\item[(\textbf{R6})] Let $A\in\mathbb{R}^{d_1\times d_2}$. Then there exists $0<K<\infty$ such that $||A||\leq K$. \\[-3mm]
\item[(\textbf{R7})] Let $A\in\mathbb{R}^{d_1\times d_2}$ and $B\in\mathbb{R}^{d_2\times d_3}$. Then $|\mathrm{Tr}(AB)|\leq ||A||_{\mathrm{HS}}\hspace{.5mm}||B||_{\mathrm{HS}}$. \\[-3mm]
\end{itemize} 
Using these results, we can verify directly that the functions $\varphi_n,\psi_n,\phi_n,\eta_n,\zeta_n,\xi_n$ have the PGP (component-wise). In particular, allowing the value of the constant $K>0$ to vary from line to line, we have
\allowdisplaybreaks{
\begin{alignat}{2} 
|\varphi^{(i)}_n(\theta,\boldsymbol{o},\mathcal{X})|&\leq ||\varphi_n(\theta,\boldsymbol{o},\mathcal{X})||=||{C}_n(\theta,\boldsymbol{o})\hat{\alpha}_n(\theta,\boldsymbol{o})|| &&(\textbf{R1}) \nonumber \\
&\leq ||{C}_n(\theta,\boldsymbol{o})||\hspace{.5mm} ||\hat{\alpha}_n(\theta,\boldsymbol{o})||\leq K||\hat{\alpha}_n(\theta,\boldsymbol{o})|| \leq K||\mathcal{X}(\theta,\boldsymbol{o})||&&(\textbf{R5},\textbf{R6},\textbf{R2}) \nonumber \\[2mm]
|\zeta_n(\theta,\boldsymbol{o},\mathcal{X})| &=\left| [{C}_n(\theta,\boldsymbol{o})\hat{\alpha}_n(\theta,\boldsymbol{o})]^T [C_n(\boldsymbol{o})\alpha_n-\tfrac{1}{2}C_n(\boldsymbol{o})\hat{\alpha}_n(\theta,\boldsymbol{o})] \right| \nonumber\\
&\leq || {C}_n(\theta,\boldsymbol{o})\hat{\alpha}_n(\theta,\boldsymbol{o})||\hspace{1mm} ||C_n(\theta,\boldsymbol{o})\alpha_n-\tfrac{1}{2}C_n(\theta,\boldsymbol{o})\hat{\alpha}_n(\theta,\boldsymbol{o})|| &&(\textbf{R3}) \hspace{-4mm} \nonumber \\
&\leq ||{C}_n(\theta,\boldsymbol{o})||^2\hspace{.5mm} ||\hat{\alpha}_n(\theta,\boldsymbol{o})||\hspace{.5mm} ||\alpha_n-\tfrac{1}{2}\hat{\alpha}_n(\theta,\boldsymbol{o})|| &&(\textbf{R5}) \nonumber \\
&\leq K ||\hat{\alpha}_n(\theta,\boldsymbol{o})||\hspace{.5mm} ||\alpha_n-\tfrac{1}{2}\hat{\alpha}_n(\theta,\boldsymbol{o})|| \leq K ||\mathcal{X}(\theta,\boldsymbol{o})||^2&&(\textbf{R6},\textbf{R2}) \nonumber \\[2mm]
|\eta^{(i,j)}_n(\theta,\boldsymbol{o},\mathcal{X})| &\leq ||\eta^{(\cdot,j)}_n(\theta,\boldsymbol{o},\mathcal{X})||=||{C}_n(\theta,\boldsymbol{o})\hat{\alpha}^{\theta_j}_n(\theta,\boldsymbol{o})|| &&(\textbf{R4}) \nonumber\\
&\leq ||{C}_n(\theta,\boldsymbol{o})||\hspace{.5mm}||\hat{\alpha}^{\theta_j}_n(\theta,\boldsymbol{o})|| \leq K ||\hat{\alpha}^{\theta_j}_n(\theta,\boldsymbol{o})||\leq K ||\mathcal{X}(\theta,\boldsymbol{o})|| \nonumber &&(\textbf{R5},\textbf{R6},\textbf{R2})\\[2mm]
|\psi^{(i)}_n(\theta,\boldsymbol{o},\mathcal{X})|&=\left|[{C}_n(\theta,\boldsymbol{o})\hat{\alpha}^{\theta_i}_n(\theta,\boldsymbol{o})]^T [C_n(\theta,\boldsymbol{o})\alpha_n-C_n(\theta,\boldsymbol{o})\hat{\alpha}_n(\theta,\boldsymbol{o})]\right|  \nonumber\\
& \leq ||{C}_n(\theta,\boldsymbol{o})\hat{\alpha}^{\theta_i}_n(\theta,\boldsymbol{o})|| \hspace{1mm} ||C_n(\theta,\boldsymbol{o})\alpha_n-C_n(\theta,\boldsymbol{o})\hat{\alpha}_n(\theta,\boldsymbol{o})|| &&(\textbf{R3}) \nonumber \\
& \leq ||{C}_n(\theta,\boldsymbol{o})||^2||\hat{\alpha}^{\theta_i}_n(\theta,\boldsymbol{o})|| \hspace{1mm} ||\alpha_n-\hat{\alpha}_n(\theta,\boldsymbol{o})|| &&(\textbf{R5}) \nonumber \\
&\leq K ||\hat{\alpha}^{\theta_i}_n(\theta,\boldsymbol{o})|| ||\alpha_n-\hat{\alpha}_n(\theta,\boldsymbol{o})||\leq K ||\mathcal{X}(\theta,\boldsymbol{o})||^2 &&(\textbf{R6},\textbf{R2}) \nonumber \\[2mm]
|\iota_n(\theta,\boldsymbol{o},\mathcal{X})|&= \left|\mathrm{Tr}\left[{M}_n\Sigma^{\alpha}_n(\theta,\boldsymbol{o})\right]\right|\leq ||M_n||_{\mathrm{HS}}\hspace{.5mm}||\Sigma^{\alpha}_n(\theta,\boldsymbol{o})||_{\mathrm{HS}} &&(\textbf{R7}) \nonumber \\
& \leq K ||\Sigma^{\alpha}_n(\theta,\boldsymbol{o})||_{\mathrm{HS}}= K ||\mathrm{vec}\left(\Sigma^{\alpha}_n(\theta,\boldsymbol{o})\right)\hspace{-.5mm}||\leq K||\mathcal{X}(\theta,\boldsymbol{o})|| &&(\textbf{R6},\textbf{R2}) \nonumber \\[2mm]
|\phi^{(i)}_n(\theta,\boldsymbol{o},\mathcal{X})|&= \left| \mathrm{Tr}^{\boldsymbol{o}_i}\left[{M}_n\Sigma^{\alpha}_n(\theta,\boldsymbol{o})\right]\right|= \left| \mathrm{Tr}\left[{M}_n\Sigma_n^{\alpha,\boldsymbol{o}_i}(\theta,\boldsymbol{o})\right]\right| \nonumber \\
&\leq ||M_n||_{\mathrm{HS}}\hspace{.5mm} ||\Sigma^{\alpha,\boldsymbol{o}_i}_n(\theta,\boldsymbol{o})||_{\mathrm{HS}}\leq K ||\Sigma^{\alpha,\boldsymbol{o}_i}_n(\theta,\boldsymbol{o})||_{\mathrm{HS}} &&(\textbf{R7},\textbf{R6}) \nonumber \\
& = K ||\mathrm{vec}\left(\Sigma^{\alpha}_n(\theta,\boldsymbol{o})\right)\hspace{-.5mm}||\leq K||\mathcal{X}(\theta,\boldsymbol{o})||  &&(\textbf{R2}). \nonumber
\end{alignat}
}
\end{proof}

\begin{proposition}
Assumptions \ref{manual_assumption2i} - \ref{manual_assumption2ii} imply Assumption \ref{assumption5}. 
\end{proposition}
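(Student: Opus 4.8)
The plan is to use the representation of $\{\mathcal{X}_n(\theta,\boldsymbol{o},t)\}_{t\geq 0}$ as a multivariate Ornstein--Uhlenbeck process with stable drift, which is exactly the content of Lemmas~\ref{propA0} and~\ref{propA0a}. First I would record a uniform version of these facts: by continuity of $A_n(\cdot)$, $C_n(\cdot,\cdot)$, $R(\cdot)$ and of the solution of the algebraic Riccati equation on the compact set $\overline{\Theta}\times\Pi^{n_y}$, there exist $M\geq 1$, $\lambda>0$ and $L<\infty$ such that $\|e^{\Phi_n(\theta,\boldsymbol{o})t}\|\leq Me^{-\lambda t}$ and $\|\Psi_n(\theta,\boldsymbol{o})\|\vee\|T_n(\theta,\boldsymbol{o})\|\leq L$ for all $t\geq 0$ and all $\theta\in\Theta$, $\boldsymbol{o}\in\Pi^{n_y}$.

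For fixed $(\theta,\boldsymbol{o})$ I would write the mild solution of \eqref{OU_eq} as $\mathcal{X}_n(\theta,\boldsymbol{o},t)=e^{\Phi_n(\theta,\boldsymbol{o})t}\mathcal{X}_n(\theta,\boldsymbol{o},0)+Z_n(\theta,\boldsymbol{o},t)$, with $Z_n$ the stochastic convolution. The marginal is then Gaussian with mean and covariance bounded uniformly in $t$ (the covariance converging to $K_{n,\infty}(\theta,\boldsymbol{o})$ from \eqref{asymptotic_cov}), so all its moments are finite and uniformly bounded; combined with a Gr\"onwall estimate for the linear SDE obtained by substituting the algorithm iterates for $(\theta,\boldsymbol{o})$, this yields $\mathbb{E}[\|\mathcal{X}_n(\theta(t),\boldsymbol{o}(t),t)\|^q]<\infty$ for every finite $t$. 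For the supremum I would bound the deterministic term by $\mathbb{E}[\sup_{s\leq t}\|e^{\Phi_n(\theta,\boldsymbol{o})s}\mathcal{X}_n(\theta,\boldsymbol{o},0)\|^q]\leq M^q\,\mathbb{E}[\|\mathcal{X}_n(\theta,\boldsymbol{o},0)\|^q]$, and control $\mathbb{E}[\sup_{s\leq t}\|Z_n(\theta,\boldsymbol{o},s)\|^q]$ by a maximal inequality for the stochastic convolution of an exponentially stable semigroup (a factorisation argument, or the Borell--TIS inequality applied to the centred Gaussian process $Z_n$ together with the uniform bound $\sup_{s\leq t}\mathbb{E}\|Z_n(\theta,\boldsymbol{o},s)\|^2\leq C$). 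This gives $\mathbb{E}[\sup_{s\leq t}\|\mathcal{X}_n(\theta,\boldsymbol{o},s)\|^q]\leq C_q(1+\log(1+t))^{q/2}$ uniformly in $(\theta,\boldsymbol{o})$, and since $(1+\log(1+t))^{q/2}=o(\sqrt{t})$, the claimed bound $\leq K\sqrt{t}$ holds for $t$ large.

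The hard part is the supremum bound for the process driven by the iterates, $\{\mathcal{X}_n(\theta(s),\boldsymbol{o}(s),s)\}_{s\geq 0}$, which solves a \emph{time-varying} linear SDE $\mathrm{d}\mathcal{X}_n=\Phi_n(\theta(t),\boldsymbol{o}(t))\mathcal{X}_n\,\mathrm{d}t+\Psi_n(\theta(t),\boldsymbol{o}(t))\,\mathrm{d}b_n(t)$. Here I would exploit the block lower-triangular structure of $\Phi_n$ from the proof of Lemma~\ref{propA0a}: the signal block $u_n$ is autonomous with the constant stable drift $A_n(\theta_0)$ (covered by the fixed-parameter case), while every filter and filter-derivative block shares the diagonal drift $A_n(\theta(t))-\Sigma_{n,\infty}(\theta(t),\boldsymbol{o}(t))C_n^{*}(\theta(t),\boldsymbol{o}(t))R^{-1}(\boldsymbol{o}(t))C_n(\theta(t),\boldsymbol{o}(t))$, whose frozen-time versions are stable uniformly in $(\theta,\boldsymbol{o})$ by Lemma~\ref{propA0a}. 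Since the iterates remain in the bounded sets $\Theta$, $\Pi^{n_y}$ and vary slowly --- their increments being controlled by the vanishing step sizes $\gamma_{\theta},\gamma_{\boldsymbol{o}}$ of Assumption~\ref{assumption1} --- a standard perturbation argument (a frozen Lyapunov function for the slowly time-varying system, as in the theory of slowly varying linear systems) shows that the transition matrix $U_n(t,s)$ of this homogeneous system still satisfies $\|U_n(t,s)\|\leq M'e^{-\lambda'(t-s)}$ for some $M'<\infty$, $\lambda'>0$. With this uniform decay in hand, the decomposition and maximal inequality of the previous paragraph apply verbatim, giving $\mathbb{E}[\sup_{s\leq t}\|\mathcal{X}_n(\theta(s),\boldsymbol{o}(s),s)\|^q]\leq C_q(1+\log(1+t))^{q/2}\leq K\sqrt{t}$ for $t$ large. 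The transfer of exponential stability from the frozen matrices to the slowly time-varying system is the one nontrivial step; everything else reduces to Gaussian tail estimates and the uniform bounds supplied by Lemmas~\ref{propA0}--\ref{propA0a}.
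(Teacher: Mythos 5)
Your argument is correct and rests on the same foundation as the paper's own proof --- Lemmas \ref{propA0} and \ref{propA0a}, i.e.\ the Ornstein--Uhlenbeck representation \eqref{OU_eq} with a uniformly stable drift matrix --- but the two proofs diverge precisely at the two delicate steps. The paper writes down the Gaussian mean and covariance of $\mathcal{X}_n(\theta(t),\boldsymbol{o}(t),t)$ explicitly with the coefficient matrices frozen at their time-$t$ values, bounds $\|m_n\|$ and $\|P_n^{1/2}\|$ using $\|e^{\Phi_n t}\|\le\alpha e^{-\beta t}$, deduces $\mathbb{E}[\|\mathcal{X}_n(t)\|^q]\le K(1+e^{-\beta qt})$ from the representation $\mathcal{X}_n=m_n+P_n^{1/2}z_n$, and then simply asserts that the bounds $\mathbb{E}[\sup_{s\le t}\|\cdot\|^q]\le K\sqrt{t}$ ``follow.'' You instead decompose the mild solution into a deterministic part plus a stochastic convolution, obtain the supremum bound from a genuine maximal inequality (factorisation or Borell--TIS), which yields the stronger $O((\log(1+t))^{q/2})=o(\sqrt{t})$ rate, and --- crucially --- you confront head-on the fact that substituting the algorithm iterates turns \eqref{OU_eq} into a time-varying linear SDE whose transition matrix is \emph{not} $e^{\Phi_n(\theta(t),\boldsymbol{o}(t))t}$; you repair this with a slowly-varying-systems perturbation argument powered by the vanishing step sizes of Assumption \ref{assumption1} and the uniform stability over the compact parameter sets. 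Your route therefore fills two gaps that the paper's shorter proof leaves implicit (its frozen-coefficient covariance formula and its final ``it follows'' both tacitly presuppose exactly the uniform exponential decay of the time-varying transition matrix that you establish), at the cost of importing the frozen-Lyapunov machinery; the one residual caveat, shared with the paper, is that once the random iterates enter the coefficients the stochastic convolution is no longer Gaussian, so a BDG-type maximal inequality should stand in for Borell--TIS at that point.
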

\begin{proof}
By Lemma \ref{propA0}, $\{\mathcal{X}_n(\theta_n(t),\boldsymbol{o}_n(t),t)\}_{t\geq 0}$ is a multivariate OU process which evolves according to equation \eqref{OU_eq}.
The mean and covariance of this process can be obtained straightforwardly as
\begin{align}
m_n(\theta_n(t),\boldsymbol{o}_n(t),t)&= e^{\Phi_n(\theta_n(t),\boldsymbol{o}_n(t))t}\mathcal{X}_n(\theta_n(0),\boldsymbol{o}_n(0),0) \\
P_n(\theta_n(t),\boldsymbol{o}_n(t),t) &= \int_0^t e^{\Phi_n(\theta_n(t),\boldsymbol{o}_n(t))(t-s)}T_n(\theta_n(t),\boldsymbol{o}_n(t))e^{\Phi_n^T(\theta_n(t),\boldsymbol{o}_n(t))(t-s)}\mathrm{d}s.
\end{align}
By Lemma \ref{propA0a}, the matrix $\Phi_n(\theta,\boldsymbol{o})$ is stable for all $\theta\in\Theta$, $\boldsymbol{o}\in\Pi^{n_y}$. It follows that $\Phi_n(\theta_n(t),\boldsymbol{o}_n(t))$ is stable for all $t\geq 0$. Thus, there exist positive constants $\alpha,\beta>0$ such that $||e^{\Phi_n(\theta_n(t),\boldsymbol{o}_n(t))t}||\leq \alpha e^{-\beta t}$ (e.g., \cite[Chapter 3]{Hsu2013}). This implies, in particular, that there exist positive constants $K_1,K_2>0$ such that 
\begin{align}
||m_n(\theta_n(t),\boldsymbol{o}_n(t),t)&||\leq ||\mathcal{X}_n(\theta_n(0),\boldsymbol{o}_n(0),0)|| \alpha e^{-\beta t} \leq K_1 e^{-\beta t}, \\
||P_n(\theta_n(t),\boldsymbol{o}_n(t),t)&||\leq \sup_{s\in[0,t]}||T_n(\theta_n(s),\boldsymbol{o}_n(s))|| \int_0^t \alpha^2 e^{-2\beta(t-s)}\mathrm{d}s
\leq K_2.
\end{align}
where, in a slight abuse of notation, we use $||\cdot||$ to denote the standard Euclidean norm in the first line, and $||\cdot||$ to denote the induced matrix norm in the second line. The matrix $P_n$ is positive-definite, and thus its norm $||P_n||$ is equal to its largest eigenvalue, say $\lambda_n$ (e.g., \cite[Chapter 1]{Isaacson1994}). It follows, writing $\smash{P_n^{\frac{1}{2}}}$ to denote the principal square root of $P_n$, that $\smash{||P_n^{\frac{1}{2}}||}$ is equal to $\smash{\lambda_n^{\frac{1}{2}}}$. We thus have
\begin{align}
 ||P_n^{\frac{1}{2}}(\theta_n(t),\boldsymbol{o}_n(t),t)|| = \lambda^{\frac{1}{2}}(\theta_n(t),\boldsymbol{o}_n(t),t)&\leq \max\{1,\lambda(\theta_n(t),\boldsymbol{o}_n(t),t)\}
\leq \max\{1,K_2\}. \nonumber 
\end{align}
It follows, defining $K = \max\{1,K_1^q,K_2^q\}>0$, that for all $q>0$, 
\begin{align}
||m_n(\theta_n(t),\boldsymbol{o}_n(t),t)||^{q}&\leq K e^{-\beta qt}, \\
||P^{\frac{1}{2}}_n(\theta_n(t),\boldsymbol{o}_n(t),t)||^{q}
&\leq K.
\end{align}
Now, using elementary properties of the multivariate normal distribution, it is possible to write $\smash{\mathcal{X}_n(\theta_n(t),\boldsymbol{o}_n(t),t) = m_n(\theta_n(t),\boldsymbol{o}_n(t),t) + P_n^{\frac{1}{2}}(\theta_n(t),\boldsymbol{o}_n(t),t)z_n}$,
where $z_n\sim\mathcal{N}(0,1_n)$. We thus have, also making use of the inequalities $||x+y||^q\leq ||x||^q+||y||^q$, and $||Ax||\leq ||A||\hspace{.2mm}||x||$, that
\begin{align}
\mathbb{E}\left[||\mathcal{X}_n(\theta_{n}(t),\boldsymbol{o}_{n}(t),t)||^{q}\right] &\leq ||m_n(\theta_{n}(t),\boldsymbol{o}_{n}(t),t)||^{q} + ||P_n^{\frac{1}{2}}(\theta_{n}(t),\boldsymbol{o}_{n}(t),t)||^{q}\mathbb{E}\left[||z_n||^{q}\right] \hspace{-2mm} \\
&\leq  ||m_n(\theta_{n}(t),\boldsymbol{o}_{n}(t),t)||^{q} + ||P_n^{\frac{1}{2}}(\theta_{n}(t),\boldsymbol{o}_{n}(t),t)||^{q} \nonumber \\[1mm]
&\leq K\left[1+e^{-\beta qt}\right] . \nonumber
\end{align}
It follows, in particular, that for all $q>0$, and for all $t\geq 0$, $\mathbb{E}[||\mathcal{X}_n(\theta_{n}(t),\boldsymbol{o}_{n}(t),t)||^q]<\infty$, and there exists $K>0$ such that for all $t$ sufficiently large, 
\begin{subequations}
\begin{align}
\mathbb{E}\left[\sup_{s\leq t}||\mathcal{X}_n(\theta,\boldsymbol{o},s)||^q\right]&\leq K\sqrt{t}~,~~~\forall \theta\in\Theta~,~\forall \boldsymbol{o}\in\Pi^{n_y}, \\
\mathbb{E}\left[\sup_{s\leq t}||\mathcal{X}_n(\theta_{n}(s),\boldsymbol{o}_{n}(s),s)||^q\right]&\leq K\sqrt{t}.
\end{align}
\end{subequations}
\end{proof}

\color{black}
\section{The Algebraic Ricatti Equation} \label{app:ARE}
In this Appendix, we recall a well known result on asymptotic solutions of the differential Ricatti equation, original due to Curtain \cite{Curtain1978a}. 

\begin{theorem*} \label{theorem:ARE}
Suppose that the pair $(\mathcal{A}(\theta),\mathcal{B}\mathcal{Q}^{\frac{1}{2}}(\theta))$ is exponentially stabilisable, and that the pair $(\mathcal{A}(\theta),\mathcal{C}(\theta,\boldsymbol{o}))$ is exponentially detectable (see, e.g., \cite{Zhang2018} for definitions of `exponentially stabilisable' and `exponentially detectable'). Then, as $t\rightarrow\infty$, 
the solution of the differential Ricatti equation 
$\Sigma(\theta,\boldsymbol{o},t)$
converges strongly to $\Sigma_{\infty}(\theta,\boldsymbol{o})$, the unique weak non-negative solution of the algebraic Ricatti equation
\begin{align}
0&=\mathcal{A}(\theta) \Sigma_{\infty}(\theta,\boldsymbol{o}) + \Sigma_{\infty}(\theta,\boldsymbol{o})\mathcal{A}^{\dagger}(\theta) \label{inf_dim_alg_ricatti} \\
&\hspace{5mm}+\mathcal{B}\mathcal{Q}(\theta)\mathcal{B}^{\dagger}-\Sigma_{\infty}(\theta,\boldsymbol{o}) \mathcal{C}^{\dagger}(
\theta,\boldsymbol{o})\mathcal{R}^{-1}(\boldsymbol{o})\mathcal{C}(\theta,\boldsymbol{o})\Sigma_{\infty}(\theta,\boldsymbol{o}). \nonumber 
 \end{align}
 \end{theorem*}
 The following result, which provides an explicit representation for the asymptotic sensor placement objective function in terms of the solution of the algebraic Ricatti equation, then follows straightforwardly. For brevity, the proof is omitted.
  
\begin{corollary}
Suppose that the assumptions of Theorem \ref{theorem:ARE} hold. Suppose also that $\mathcal{M}(t)$ converges strongly to a bounded linear operator $\mathcal{M}_{\infty}:\mathcal{H}\rightarrow\mathcal{H}$ as $t\rightarrow\infty$. Then
 \begin{equation}
 \tilde{\mathcal{J}}(\theta,\boldsymbol{o}) := \lim_{t\rightarrow\infty} \frac{1}{t} \mathcal{J}_t(\theta,\boldsymbol{o})  = \mathrm{Tr}\left[ M_{\infty}\Sigma_{\infty}(\theta,\boldsymbol{o})\right].
 \end{equation}
 \end{corollary}

\section{The Spatial Weighting Operator} \label{app:weighting}
In this Appendix, we provide an explicit definition of the spatial weighting operator $\mathcal{M}(t):\mathcal{H}\rightarrow\mathcal{H}$. Throughout this paper, we assume that this operator is defined according to
\begin{equation}
(\mathcal{M}(t)\varphi) (\boldsymbol{x}) = m(\boldsymbol{x},t)\varphi(\boldsymbol{x})~,~~~\varphi \in\mathcal{H},
\end{equation}
where $m(\cdot,t)\in\mathcal{H}$ is a spatial weighting function to be defined below. In this case, it is possible to show (e.g., \cite{Chen1975,Colantuoni1978,Kumar1978}), using Mercer's Theorem \cite{Mercer1909}, that 
\begin{equation}
\mathrm{Tr}\left[\mathcal{M}(t)\Sigma(\theta,\boldsymbol{o},t)\right] = \int_{\Pi} m(\boldsymbol{x},t) \tilde{\Sigma}(\theta,\boldsymbol{o},\boldsymbol{x},\boldsymbol{x},t)\mathrm{d}\boldsymbol{x},
\end{equation}
where $\tilde{\Sigma}(\theta,\boldsymbol{o},\cdot,\cdot,t):\Pi\times\Pi\rightarrow\mathbb{R}$ is the kernel operator (or covariance function) associated with the covariance operator $\Sigma(\theta,\boldsymbol{o},t)$. Thus, in particular, the sensor placement objective function can be written in the form 
\begin{equation}
{\mathcal{J}}_t(\theta,\boldsymbol{o}) = \int_0^t \left[\int_{\Pi} m(\boldsymbol{x},s) \tilde{\Sigma}(\theta,\boldsymbol{o},\boldsymbol{x},\boldsymbol{x},s)\mathrm{d}\boldsymbol{x}\right] \mathrm{d}s.
\end{equation}
We can now define the explicit form of the spatial weighting function $m(\cdot,t)\in\mathcal{H}$. In particular, we will assume that
\begin{equation}
m(\boldsymbol{x},t) =  c_0 \mathds{1}_{\boldsymbol{x}\in\Pi_{w}(t)} + c_1 \mathds{1}_{\boldsymbol{x}\in\Pi\setminus \Pi_{w}(t)}~,~~~\boldsymbol{x}\in\Pi,
\end{equation} 
where $\Pi_{w}(t)\subseteq\Pi$ denotes a `weighted' or `target' spatial region, which corresponds to the region in which we are most interested in minimising the uncertainty in the optimal state estimate, and $0\leq c_1\leq c_0\leq 1$ are positive constants. The choice of the constants $c_0$ and $c_1$, or equivalently the ratio $\frac{c_0}{c_1}\in[1,\infty)$, determines the extent to which the objective function will prioritise minimising the uncertainty in the state estimate in the region $\Pi_{w}(t)$, relative to the region $\Pi\setminus\Pi_{w}(t)$. In our numerics, we use the following specific definitions of $c_0$, $c_1$, and $\Pi_{w}(t)$. 

\subsubsection*{Simulation I} In this simulation, we set $c_0 = 1$, $c_1 = 0$, and $\Pi_{w}(t) = \Pi_{w} = \bigcup_{i=1}^{8}\{\boldsymbol{x}\in\Pi:|\boldsymbol{x} - \boldsymbol{x}_i|\leq r\}$, where $\{\boldsymbol{x}_i\}_{i=1}^{8}$ are the 8 `target' locations defined in \eqref{targets}, and $r>0$ is a small positive constant.
This yields $m(\boldsymbol{x},t) = \sum_{i=1}^{8} \mathds{1}_{\{\boldsymbol{x}\in\Pi:|\boldsymbol{x}- \boldsymbol{x}_i|\leq r\}}$, and thus
\begin{equation}
\mathrm{Tr}\left[\mathcal{M}(t)\Sigma(\theta,\boldsymbol{o},t)\right] = \sum_{i=1}^{8} \int\limits_{\{\boldsymbol{x}\in\Pi : |\boldsymbol{x}-\boldsymbol{x}_i|<r\}} \hspace{-5mm}\tilde{\Sigma}(\theta,\boldsymbol{o},\boldsymbol{x},\boldsymbol{x},t)\mathrm{d}\boldsymbol{x}
\end{equation}
so that the objective function only seeks to minimise the uncertainty in the state estimate close to the target locations $\{\boldsymbol{x}_i\}_{i=1}^{8}$. We remark that similar results are obtained if one sets $c_1 = \varepsilon$, for some $\varepsilon\ll 1$. In this case, the weighted trace of the covariance is given by a similar expression to \eqref{eqc07} (see below).

\subsubsection*{Simulation II, IV, V} In these simulations, we set  $c_0=c_1=1$, or equivalently $\frac{c_0}{c_1} = 1$. In this case, the spatial weighting function reduces to the identity,$m(\boldsymbol{x},t)= \mathds{1}_{\boldsymbol{x}\in\Pi_w} + \mathds{1}_{\boldsymbol{x}\in\Pi\setminus\Pi_{w}} = \mathds{1}_{\boldsymbol{x}\in\Pi}$, and we have 
\begin{equation}
\mathrm{Tr}\left[\mathcal{M}(t)\Sigma(\theta,\boldsymbol{o},t)\right] = \int_{\Pi} \tilde{\Sigma}(\theta,\boldsymbol{o},\boldsymbol{x},\boldsymbol{x},t)\mathrm{d}\boldsymbol{x}
\end{equation}
so that the objective function equally weights the uncertainty in the state estimate at all spatial locations. 

\subsubsection*{Simulation III}. In this simulation we set $c_0 = 1$, $c_1 = 0.01$, and $\Pi_w(t) = \bigcup_{i=1}^{4}\{\boldsymbol{x}\in\Pi:|\boldsymbol{x} - \boldsymbol{x}_i(t) |\leq r\}$, where $\{\boldsymbol{x}_i(t)\}_{i=1}^{4}$ are the 4 time-varying locations shown in purple in Figure \ref{fig2g}. This yields $m(\boldsymbol{x},t) = \sum_{i=1}^{4} \mathds{1}_{\{\boldsymbol{x}\in\Pi:|\boldsymbol{x}- \boldsymbol{x}_i(t)|\leq r\}} + 0.01 \mathds{1}_{\{\boldsymbol{x}\in\Pi:\cap_{i=1}^4|\boldsymbol{x}- \boldsymbol{x}_i(t)|> r\}}$ and
\begin{equation}
\mathrm{Tr}\left[\mathcal{M}(t)\Sigma(\theta,\boldsymbol{o},t)\right] = \sum_{i=1}^{4} \int\limits_{\{\boldsymbol{x}\in\Pi : |\boldsymbol{x}-\boldsymbol{x}_i(t)|<r\}} \hspace{-10mm}\tilde{\Sigma}(\theta,\boldsymbol{o},\boldsymbol{x},\boldsymbol{x},t)\mathrm{d}\boldsymbol{x} \hspace{2mm} + 0.01\hspace{-8mm} \int\limits_{\{\boldsymbol{x}\in\Pi:\cap_{i=1}^4|\boldsymbol{x}- \boldsymbol{x}_i(t)|> r\}}\hspace{-13mm}\tilde{\Sigma}(\theta,\boldsymbol{o},\boldsymbol{x},\boldsymbol{x},t)\mathrm{d}\boldsymbol{x} \label{eqc07}
\end{equation}
so that the objective function strongly weights the uncertainty in the state estimate in the regions close to the time-varying locations $\{\boldsymbol{x}_i(t)\}_{i=1}^4$, but also contains a contribution from the uncertainty in the state estimate at all other locations.


\section{Additional Details for Figure \ref{fig0}} \label{app:fig0}
In this Appendix, we provide additional details of the parameter values and the sensor placements used to generated Figure \ref{fig0}. The true parameters $\theta^{*}$ and the incorrect parameters $\theta_0$ are given respectively by 
\begin{align}
\theta^{*} &= (\rho_0 = 0.5,\sigma^2 = 0.2,\zeta=0.5,\rho_1=0.05,\gamma=2,\alpha=\tfrac{\pi}{4},\mu_x=0.3,\mu_y=-0.3), \\[1mm]
\theta_0 &= ({\rho}_{0} = 0.5,{\sigma}^2 = 0.2,{\zeta}=0.5,{\rho}_{1}=0.30,{\gamma}=2,{\alpha}=\tfrac{\pi}{4},{\mu}_{x}=0.3,{\mu}_{y}=-0.3). 
\end{align}
The optimal sensor placement consists of 400 sensors, uniformly distributed over the spatial domain $\Pi = [0,1]^2$. The sub-optimal sensor placement consists of 400 sensors, distributed at random over $[0.5,0.95]^2$.
\color{black}



\bibliographystyle{siamplain}
\bibliography{references}

\end{document}